\colorlet{defaultgreen}{green!100}
\newcommand{\darkercolor}[3]{
    \colorlet{#3}{#1!#2!black}
}
\newcommand{\mathds}[1]{#1}
\DeclareMathAlphabet{\mathpzc}{OT1}{pzc}{m}{it}
\theoremstyle{plain}
\newtheorem{theorem}{Theorem}[section]
\newtheorem*{theorem*}{Theorem}
\newtheorem{maintheorem}{Theorem}
\newtheorem{lemma}[theorem]{Lemma}
\newtheorem{proposition}[theorem]{Proposition}
\newtheorem{corollary}[theorem]{Corollary}
\newtheorem{conjecture}[theorem]{Conjecture}
\theoremstyle{definition}
\newtheorem{definition}[theorem]{Definition}
\newtheorem{question}[theorem]{Question}
\theoremstyle{remark}
\newtheorem{remark}[theorem]{Remark}
\newtheorem{egr}[theorem]{Example}
\newenvironment{example}{\begin{egr}}{\hfill\qedsymbol \end{egr}}
\numberwithin{equation}{section}
\numberwithin{figure}{section}
\newcommand{\val}{\mathrm{val}}
\newcommand{\R}{\mathfrak{o}} 
\newcommand{\PP}{\mathfrak{p}} 
\newcommand{\ratk}{F}  
\newcommand{\resk}{\mathfrak{f}} 
\newcommand{\Res}{\mathrm{Res}} 
\newcommand{\Ind}{\mathrm{Ind}} 
\newcommand{\cind}{\textrm{c-}\mathrm{Ind}} 
\newcommand{\cInd}{\textrm{c-}\mathrm{Ind}}
\newcommand{\Hom}{\mathrm{Hom}}
\newcommand{\Spec}{\mathrm{Spec}}
\newcommand{\Rep}{\mathrm{Rep}}
\newcommand{\SL}{\mathrm{SL}}
\newcommand{\GL}{\mathrm{GL}}
\newcommand{\Gal}{\mathrm{Gal}}
\newcommand{\Z}{\mathbb{Z}} 
\newcommand{\bbR}{\mathbb{R}}
\newcommand{\bG}{\mathbf{G}}
\newcommand{\bS}{\mathbf{S}}
\newcommand{\bL}{\mathbf{L}}
\newcommand{\bZ}{\mathbf{Z}}
\newcommand{\bH}{\mathbf{H}}
\newcommand{\bT}{\mathbf{T}}
\newcommand{\bC}{\mathbf{C}}
\newcommand{\bU}{\mathbf{U}}
\newcommand{\bM}{\mathbf{M}}
\newcommand{\bounded}{\mathrm{b}}
\newcommand{\sG}{\mathsf{G}}
\newcommand{\sT}{\mathsf{T}}
\newcommand{\sH}{\mathsf{A}}
\newcommand{\sP}{\mathsf{P}}
\newcommand{\sM}{\mathsf{M}}
\newcommand{\sN}{\mathsf{N}}
\newcommand{\sQ}{\mathsf{Q}}
\newcommand{\sL}{\mathsf{L}}
\newcommand{\sU}{\mathsf{U}}
\newcommand{\apart}{\mathscr{A}}
\newcommand{\buil}{\mathscr{B}}
\newcommand{\facet}{\mathscr{F}}
\newcommand{\cJ}{\mathcal{J}}
\newcommand{\cN}{\mathcal{N}}
\newcommand{\cZ}{\mathcal{Z}}
\newcommand{\cH}{\mathcal{H}}
\newcommand{\cW}{\mathcal{W}}
\newcommand{\alg}{\mathrm{alg}}
\newcommand{\der}{\mathrm{der}}
\newcommand{\cosrepa}{a}
\newcommand{\cosrepb}{b}
\newcommand{\bz}{\mathpzc{z}}
\newcommand{\proj}{\mathrm{proj}}
\newcommand{\red}{\mathrm{red}}
\newcommand{\un}{\mathrm{un}}
\newcommand{\KimYugroup}{J^0_M}
\newcommand{\KimYusc}{J^0}
\begin{document}

\setlength{\parindent}{0pt}
\title{Typical representations via fixed point sets in Bruhat-Tits buildings}
\author{Peter Latham}
\address{Department of Mathematics and Statistics, University of Ottawa, Ottawa, Canada}
\email{platham@uottawa.ca}
\thanks{The first author's research was supported by the Heilbronn Institute for Mathematical Research.}

\author{Monica Nevins}
\address{Department of Mathematics and Statistics, University of Ottawa, Ottawa, Canada}
\email{mnevins@uottawa.ca}
\thanks{The second author's research is supported by a Discovery Grant from NSERC Canada.}
\date{\today}

\begin{abstract}
\noindent For an essentially tame supercuspidal representation $\pi$ of a connected reductive $p$-adic group $G$, we establish two distinct and complementary sufficient conditions for the irreducible components of its restriction to a maximal compact subgroup to occur in a representation of $G$ which is not inertially equivalent to $\pi$. These two results are further formulated in terms of the geometry of the Bruhat-Tits building of $G$ and its fixed points under the action of certain tori.  The consequence is a set of broadly applicable tools for addressing the branching rules of $\pi$ and the unicity of $[G,\pi]_G$-types. 
\end{abstract}
\keywords{Theory of types, $p$-adic groups, essentially tame representations, Bruhat-Tits buildings, J.K. Yu types}
\subjclass{22E50}
\maketitle
\setlength{\parskip}{12pt}
\setlength{\parindent}{0pt}

\section{Introduction}

One of the most fruitful tools for studying the (smooth, complex) representation theory of a reductive $p$-adic group $G=\bG(F)$ is the \emph{theory of types}. Given an irreducible representation $\pi$ of $G$, a type $(J,\lambda)$ for $\pi$ is an irreducible representation $\lambda$ of a compact open subgroup $J$ of $G$ such that containing $\lambda$ upon restriction to $J$ gives a necessary and sufficient condition for an irreducible representation of $G$ to be inertially equivalent to $\pi$, in the sense of \cite{BushnellKutzko1998}. Constructions of types are central to many recent developments in the representation theory of $p$-adic groups, and in particular to the explicit constructions of supercuspidal representations \cite{BushnellKutzko1993,Morris1999,Yu2001,Stevens2008,SecherreStevens2008}. Moreover, the theory of types has been shown to mirror much of the structure of the representation theory of $p$-adic groups in the more traditional sense: in particular, the Bushnell--Kutzko theory of covers gives an analogue of parabolic induction, and any instance of the local Langlands correspondence is expected to give rise to an inertial Langlands correspondence relating types to representations of the inertia group of $F$.

With types playing such a fundamental role in the representation theory of $p$-adic groups, it is natural to expect that they are somewhat hard to come by. Indeed the \emph{unicity of types} is the expectation that, for a supercuspidal representation $\pi$ of $G$ admitting a type $(J,\lambda)$, all other types for $\pi$ must arise from $(J,\lambda)$ by a series of minor representation-theoretic renormalizations (see Conjecture \ref{conj:unicity}). The unicity of types is now known in many special cases, specifically for split groups of type A \cite{Paskunas2005,Latham2016,Latham2018}, for depth-zero representations \cite{Latham2017}, and for many toral representations \cite{LathamNevins2018}; there are also some results towards the unicity of types for non-cuspidal representations \cite{Nadimpalli2017,Nadimpalli2019}. Our goal in this paper is to study the unicity of types for ``almost all'' supercuspidal representations. Our result explicitly makes use of the geometry of the Bruhat--Tits building $\buil(G)$ of $G$, and offers far more general results than have been obtained to date.

Specifically, we restrict attention to the supercuspidal representations and types constructed by J.K. Yu in \cite{Yu2001}, generalizing a previous construction due to Adler \cite{Adler1998}; following the terminology of \cite{BushnellHenniart2005}, we refer to such supercuspidal representations as \emph{essentially tame}. In almost all cases, restricting attention to these essentially tame representations is a vacuous condition: J. Fintzen proved in \cite{Fintzen2018}, extending earlier results of J. Kim \cite{Kim2007}, that every supercuspidal representation of $G$ is essentially tame if $p$ is coprime to the order of the Weyl group of $G$.

In order to describe the possible types contained within an essentially tame supercuspidal representation $\pi$ of $G$, we consider more generally the question of branching rules for $\pi$ upon restriction to a maximal compact subgroup $K$ of $G$. An equivalent formulation of the unicity of types is the assertion that any type contained in $\pi|_K$ (if there are any) must be induced from a type arising via J.K. Yu's construction. The determination of complete branching rules is a difficult problem which has only been solved in a few special cases, including $\GL_2(F)$  \cite{Casselman1973,Hansen1987}, $\SL_2(F)$ \cite{Nevins2005,Nevins2013}, unramified principal series of $\GL_3(F)$ \cite{CampbellNevins2010,OnnSingla2014}, and some partial results in the general depth zero case \cite{Nevins2014}. One of the impediments is that the description of the dual of $K$ remains an open problem.  Describing relationships between the branching rules for various families of representations of $G$ provides a valuable avenue to describing this dual.

We now give a brief description of our methods and results. Let $\pi$ be an irreducible, essentially tame supercuspidal representation of $G$, constructed from a datum $\Sigma$ as in \cite{Yu2001}. Let $(J,\lambda)=(J(\Sigma),\lambda_\Sigma)$ denote the type associated to $\Sigma$ in \cite{Yu2001}, and fix a maximal compact subgroup of $G$, which must coincide with the stabilizer $G_y$ of some point $y\in\buil(G)$. Since the restriction to $G_y$ of $\Pi:=\cInd_J^G\ \lambda$ is isomorphic to a direct sum of copies of $\pi|
_{G_y}$, the components we wish to consider are those of the Mackey decomposition of $\Pi$, which we show that we can rewrite as
\[\Pi|_{G_y}=\bigoplus_{g\in G_y\backslash G/J}{}^g\tau(y,g),
\]
where each \emph{Mackey component} $\tau(y,g)$ is a finite-dimensional representation of $G_{g^{-1}y}$ and $g^{-1}y$ ranges over the $G$-orbit of $y$ in $\mathscr{B}(G)$. We relate the capacity of $\tau(y,g)$ to contain a type to the position of the point $g^{-1}y$ in relation to the building-theoretic ingredients in the datum defining $\pi$: the building of a twisted Levi subgroup $G^0$, and a vertex $x$ of $\buil(G^0)$, viewed as a point in $\buil(G)$ via a generic embedding.

We prove the following two theorems which, during this introduction, are stated in a strictly weaker form for the sake of clarity.

Any point $z\in\buil(G)$ has a unique closest point in $\buil(G^0)$; this defines a projection map $\buil(G)\rightarrow\buil(G^0)$. Our first result describes a relationship between the structure of $\tau(y,g)$ and the image of the point $g^{-1}y\in\buil(G)$ under this projection map.

\begin{maintheorem}\label{thm:intro-1}[{see Sections~\ref{sec:non-cusp} and \ref{sec:projection}}]
Suppose that the projection of $g^{-1}y$ onto $\buil(G^0)$ lies in a facet distinct from $x$. Then every irreducible component $\tau$ of $\tau(y,g)$ is contained in an irreducible, essentially tame non-cuspidal representation of $G$, and hence is not a $[G,\pi]_G$-type.
\end{maintheorem}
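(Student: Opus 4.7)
The plan is to construct, for any given irreducible component $\tau$ of $\tau(y,g)$, an essentially tame non-cuspidal representation $\pi'$ of $G$, inertially inequivalent to $\pi$, for which $\tau$ appears in $\pi'|_{G_{g^{-1}y}}$. Once this is in hand the conclusion is immediate: any $[G,\pi]_G$-type must occur only in representations inertially equivalent to $\pi$, so $\tau$ cannot itself be such a type.

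The construction of $\pi'$ should come from modifying the Yu datum $\Sigma = (\vec{\bG}, \vec{\phi}, (x,\rho))$ exclusively in its depth-zero ingredient. Let $\mathcal{F}'$ be the facet of $\buil(G^0)$ containing the projection of $z := g^{-1}y$; by hypothesis $\mathcal{F}' \neq \{x\}$. Since the vertex $x$ already supports the anisotropic-modulo-centre reductive quotient needed for $\rho$ to be cuspidal, no neighbouring facet is forced to be of anisotropic type, and in particular the reductive quotient at $\mathcal{F}'$ admits non-cuspidal representations via Harish-Chandra parabolic induction. I would choose such a non-cuspidal $\rho'$ on the stabilizer of $\mathcal{F}'$ in $G^0$ and apply the Kim--Yu variant of Yu's construction to the modified datum $\Sigma' = (\vec{\bG}, \vec{\phi}, (\mathcal{F}', \rho'))$ to produce a compact open subgroup $J' \subseteq G$ and a representation $\lambda'$ of $J'$, whose compact induction is a finite direct sum of copies of an essentially tame non-cuspidal $\pi'$ of $G$.

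The next step is a Mackey comparison at $z$. Writing the analogous decomposition
\[
\Pi'|_{G_z} \;=\; \bigoplus_{h \in G_z \backslash G / J'} \Ind_{G_z \cap hJ'h^{-1}}^{G_z} \,{}^h\lambda',
\]
I would compare the $h = 1$ summand with the corresponding piece of $\tau(y,g)$. Because the generic-character data $\vec{\phi}$ is the same for $\Sigma$ and $\Sigma'$, the depth-positive factors of $\lambda$ and $\lambda'$ should coincide, and the comparison reduces to a question about the depth-zero factors $\rho$ and $\rho'$ restricted to suitable parahorics at $z$. Since the projection of $z$ to $\buil(G^0)$ lies in $\mathcal{F}'$, these restrictions both factor through the reductive quotient of $G^0_{\mathcal{F}'}$, where Harish-Chandra's philosophy of cusp forms supplies a $\rho'$ containing any prescribed irreducible constituent arising from $\rho$. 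Assembling these choices, every irreducible component $\tau$ of $\tau(y,g)$ appears also in $\pi'|_{G_z}$, completing the plan.

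The hardest step is the Mackey comparison in the third paragraph: one must verify rigorously that the depth-positive ingredients of $\lambda$ and $\lambda'$ interact identically with $G_z$, and that the residual depth-zero question is indeed controlled by $\mathcal{F}'$ rather than by $x$. I expect this to require an inductive analysis along the twisted Levi filtration $\bG^0 \subset \bG^1 \subset \cdots \subset \bG^d$, exploiting at each stage compatible projection maps between the buildings $\buil(G^i)$ and the known factorization properties of the generic characters $\phi_i$ through the corresponding parahoric quotients; the geometric hypothesis on $g^{-1}y$ should propagate through these projections and isolate the depth-zero layer as the sole remaining degree of freedom.
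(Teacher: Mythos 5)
Your high-level strategy is correct: produce a non-cuspidal representation via a modified (Kim--Yu) datum, and then show that each component $\tau$ of $\tau(y,g)$ sits inside it. But there are two substantive gaps, and the point at which you anchor the new datum is wrong.

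First, the Kim--Yu non-cuspidal construction cannot simply be run ``at the stabilizer of a facet $\mathcal{F}'$ of $\buil(G^0)$''. A non-cuspidal datum requires a Levi subgroup $\bM^0$ of $\bG^0$ together with a \emph{vertex} of $\buil(M^0)$, plus a generic embedding into $\buil(G^0)$. The paper deals with this by translating the geometric hypothesis into the statement that the image $\sH$ of $J_0\cap G_{g^{-1}y}$ in the reductive quotient $\sG^0_x$ lands inside a \emph{proper parabolic} $\sP$ (Theorem~\ref{thm:non-cusp}, together with Proposition~\ref{prop:non-cusp-hypothesis}). Only then does it decompose $\sigma_0|_\sM$ into irreducibles $\xi$, take the cuspidal support $(\sL_\xi,\zeta_\xi)$ of each, and produce a point $u_\xi$ and a non-cuspidal datum $\Sigma_\xi$. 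This intermediate parabolic step is not cosmetic: without it you have no control over which non-cuspidal $\rho'$ you are allowed to use, and ``Harish-Chandra's philosophy of cusp forms'' will not by itself give you a $\rho'$ producing a genuine Kim--Yu datum that contains the given $\tau$.

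Second, and more seriously, you anchor the new datum at the possibly distant facet $\mathcal{F}'=\proj_{\buil(G^0)}(g^{-1}y)$, and then propose to do the Mackey comparison at $g^{-1}y$. The paper instead chooses $u_\xi$ in the same apartment and \emph{adjacent to} $x$, on the side opposite to $\sP$, precisely so that the crucial inclusion $J_\xi\subset J_0$ holds (Lemma~\ref{lem:define-point-u-xi}, relation~\eqref{eqn:J-xi-inside-J}). That inclusion is what makes possible the key technical step you defer in your last paragraph: the comparison of Heisenberg--Weil lifts $\kappa$ and $\kappa_\xi$ (Proposition~\ref{prop:isotypic-restriction}), proved by exhibiting $\cW_u$ as a symplectic subspace of $\cW_x$ and invoking Stone--von Neumann. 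With your choice of base point at $\mathcal{F}'$, there is no reason for the groups $J'$ to be comparable to $J$, and the assertion that ``the depth-positive factors of $\lambda$ and $\lambda'$ should coincide'' will not survive scrutiny: the $(G^i,G^{i+1})$-groups and the resulting Heisenberg quotients depend on the base point. So as written, the third paragraph's comparison has no proof, and filling it in along the lines you sketch would essentially force you to rediscover the paper's move: work adjacent to $x$, not out at $\proj(g^{-1}y)$.
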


More precisely, in Section~\ref{sec:non-cusp} we prove this result under a  
weaker hypothesis about $J \cap G_{g^{-1}y}$.
In section \ref{sec:projection}, we relate this hypothesis to the projection map discussed above, and use this to both identify a large set of points in $\buil(G)$ to which Theorem \ref{thm:intro-1} applies, and a smaller subset to which it cannot.

For our second result, we identify a finite-index subgroup $H$ of $J$, equipped with a natural Moy--Prasad-like filtration by subgroups $H_t$, for $t\geq 0$. Each group $H_t$ fixes pointwise a compact neighbourhood $\Omega_t$ of $x\in\buil(G)$, and these neighbourhoods increase with $t$. Similarly, the filtration subgroups of the center $Z^0$ of $G^0$ fix pointwise an increasing family of $G^0$-invariant neighbourhoods $\Xi_t$ of $\buil(G^0)$.

\begin{maintheorem}\label{thm:intro-2}[{see Section~\ref{sec:seville}}]
  Suppose that the geodesic from $x$ to $g^{-1}y$ meets a point of $\Omega_{t+} \setminus \Xi_t$.  Then every irreducible component $\tau$ of  $\tau(y,g)$  must occur in the restriction of an irreducible representation which is not inertially equivalent to $\pi$.  Hence $\tau$ is not a $[G,\pi]_G$-type.
  \end{maintheorem}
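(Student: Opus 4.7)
My plan is to prove the conclusion by exhibiting, for each irreducible constituent $\tau$ of $\tau(y,g)$, an irreducible supercuspidal representation $\pi'$ of $G$ that is not inertially equivalent to $\pi$ yet whose restriction to $G_y$ still contains $\tau$; this precludes $\tau$ from being a $[G,\pi]_G$-type. To construct $\pi'$, I would twist the Yu datum $\Sigma$ at depth exactly $t$: choose a smooth character $\chi$ of $Z^0$ with $\chi|_{Z^0_{t+}} = 1$ but $\chi|_{Z^0_t} \neq 1$ (possible by the Moy--Prasad quotient structure) and incorporate $\chi$ into the level-$t$ generic character of $\Sigma$, producing a new datum $\Sigma'$. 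Yu's construction then yields a supercuspidal $\pi'$ whose central character differs from that of $\pi$ at depth $t$, placing $\pi'$ in a distinct inertial class from $\pi$.

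The next step is to show that $\tau(y,g)$ and the analogous Mackey component $\tau'(y,g)$ computed from $\lambda_{\Sigma'}$ coincide as representations of $G_{g^{-1}y}$. This reduces to showing that $\lambda_\Sigma$ and $\lambda_{\Sigma'}$ have the same restriction to ${}^g J \cap G_y$, equivalently that the twisting character $\chi$ (pulled through Yu's construction) becomes trivial on this intersection. Here the geometric hypothesis enters decisively: since $\Omega_{t+}$ is fixed pointwise by $H_{t+}$ and the point $z \in \Omega_{t+}$ lies on the geodesic from $x$ to $g^{-1}y$, convexity of fixed-point sets in $\buil(G)$ implies that $H_{t+}$ fixes the entire segment $[x,z]$. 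A Moy--Prasad filtration analysis along this segment then yields triviality of the twist on the depth-$t$ portion of ${}^g J \cap G_y$, and the Mackey components agree; thus $\tau$ appears as an irreducible constituent of $\pi'|_{G_y}$, as required.

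The main obstacle will be reconciling two opposing requirements on $\chi$: visibility on $Z^0_t$ (to guarantee $\pi' \not\sim \pi$) and invisibility on the intersection with $G_{g^{-1}y}$ (to preserve the Mackey component). The hypothesis $z \in \Omega_{t+} \setminus \Xi_t$ is engineered to deliver both: the $\Omega_{t+}$ part provides the geometric invisibility of $\chi$ along the building geodesic, while the condition $z \notin \Xi_t$ prevents $\chi$ from being absorbed into a $Z^0_t$-fixed piece of the datum and thus ensures that the inertial class genuinely changes. Making this balancing act rigorous---tracking how the modified character interacts with the refactorization of the Yu type, with conjugation by $g$, and with the intersection with $G_y$, and verifying that every generic condition required by Yu's construction is preserved by the twist---will be the principal technical work.
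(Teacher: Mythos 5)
Your proposal rests on the idea of exhibiting a concrete new supercuspidal representation $\pi'$, obtained by twisting the Yu datum $\Sigma$ at level $t$ by a character $\chi$ of $Z^0$ that is visible on $Z^0_t$ but invisible on $Z^0_{t+}$ and on the relevant intersection with $G_{g^{-1}y}$. This is not the route the paper takes, and for good reason: the paper's own remark following Theorem~\ref{thm:seville} explicitly notes that this constructive strategy fails in general. The character that needs to be perturbed is a character $\mu$ of the finite abelian $p$-group $H_t/H_{t+}\cong G^i_{x,t:t+}$, trivial on the image of $H_t\cap G_u$ but nontrivial on a coset $\mathpzc{z}H_{t+}$ with $\mathpzc{z}\in Z^i_t$; such a $\mu$ always exists for group-theoretic reasons, but it \emph{need not extend} to a quasi-character of $G^i$, and a fortiori need not arise as (or even restrict to) a character of $Z^0$. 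Without such an extension you cannot form a legitimate Yu datum $\Sigma'$, so your $\pi'$ need not exist. Your proposal papers over this with the phrase ``incorporate $\chi$ into the level-$t$ generic character'' --- but a character of $Z^0$ is not a character of $G^i$, and even when extension is possible, genericity of $\phi^i\chi$ must be checked. (Note also that the index should be $i$, the level determined by $t$, not $0$ as you wrote.)

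The paper's actual proof is a \emph{proof by contradiction via intertwining of semisimple characters}, and it never constructs $\pi'$. It defines $\theta'=\theta_t\mu$ on $H_t$ only, and supposes for contradiction that $\tau$ is a $[G,\pi]_G$-type. Then both $\theta_t$ and $\theta'$ must occur in $\pi|_{H_t}$, so there exists $a\in G$ intertwining them. The technical heart is an inductive descent --- based on \cite[Theorem~9.4]{Yu2001} and the factorization $a=j_1bj_2$ with $j_1,j_2\in\cJ^{j+1}$ and $b\in G^j$, together with Lemma~\ref{lem:cJ-normalizes-Ht} --- that reduces the intertwiner step by step from $G=G^d$ down to $G^i$. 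Once $a\in G^i$, conjugation by $a$ fixes $Z^i_t$ pointwise (centrality), forcing $\theta_t$ and $\theta'$ to agree on $Z^i_t$, which contradicts $\theta_t(\mathpzc{z})\neq\theta'(\mathpzc{z})$. This descent argument is the genuine substance of the proof and is entirely absent from your proposal; the ``Moy--Prasad filtration analysis along the geodesic'' you invoke does not substitute for it. If you want to salvage the constructive version, you would need to add hypotheses (e.g., that $\mu$ is trivial on the image of $[G^i,G^i]_{x,t}$), which is precisely the special case singled out in the paper's remark --- but that is strictly weaker than the theorem as stated.
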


We now describe the strategy of proof for each theorem. To prove Theorem \ref{thm:intro-1} we construct, for each irreducible representation $\tau$ occuring in $\tau(y,g)$, a non-cuspidal representation containing $\tau$, by carefully shifting from the point $x$ to a facet adjacent to $x$; to this facet we are able to associate a type for an essentially tame non-cuspidal representation following \cite{KimYu2017}, and show that this non-cuspidal representation must contain $\tau$. This argument significantly generalizes the central argument of \cite{Latham2017}, which applied only in the case that $\pi$ is depth-zero.

To prove Theorem \ref{thm:intro-2}, we perturb the \emph{simple character} of our datum in a manner which does not alter the Mackey component under consideration (but also does not necessarily produce the simple character of a new representation). We then show that the existence of such a perturbation violates the capacity of the Mackey component to lie in the restriction of a single inertial class of supercuspidal representations. This significantly generalizes an argument first presented in \cite{Paskunas2005}, and then carried out for all toral supercuspidal representations in \cite{LathamNevins2018}.

While these two results provide new, interesting, and explicit information regarding the branching rules of essentially tame supercuspidal representations, our major motivation in proving these results is to develop building-theoretic tools towards a proof of the unicity of types which work in the most general setting currently available. 

We show that the usual statement of the unicity of types may be reformulated in our notation (see Conjecture~\ref{conj:unicity-2}) as the statement that $\tau(y,g)$ contains a representation which is typical for $\pi$ if and only if $g^{-1}y\in\buil(G)$ is fixed by the action of $J$. It therefore remains to show that, for any point $y$ of $\buil(G)$ which is \emph{not} fixed by $J$, it must be the case that $g^{-1}y$ satisfies the hypotheses of either of the above two theorems. While the hypotheses of Theorem \ref{thm:intro-1} are easily understood, understanding their relation with those of Theorem \ref{thm:intro-2} appears to be intertwined with a number of explicit open questions regarding the geometry of $\buil(G)$, on which we elaborate in the final section. In particular, we state some explicit, open questions about the structure of $\buil(G)$ and the representation theory of the various reductive quotients of parahoric subgroups of $G$; our arguments show that the unicity of types follows from---and is essentially equivalent to---these questions.

The paper is organized as follows.  We briefly set some notation in Section~\ref{sec:notation} before establishing the necessary background in Bruhat--Tits theory in Section~\ref{sec:buildings}.  In Section~\ref{sec:types} we recap the general theory of types, before turning to the essential tame types which are the subject of this paper in Section~\ref{sec:kimyu}.  We lay out our strategy and define our Mackey components in Section~\ref{sec:mackey}.  Section~\ref{sec:non-cusp} is devoted to the statement and proof of Theorem~\ref{thm:non-cusp}.  In Section~\ref{sec:projection}, we describe the projection onto a sub-building and use these ideas to give two building-theoretic conditions under which Theorem~\ref{thm:non-cusp} can or cannot hold.  We also illustrate these with an example in $\mathbf{Sp}_4(F)$, which provides a lead-in to  Section~\ref{sec:seville}, which is devoted to the statement and proof of Theorem~\ref{thm:seville}. In  Section~\ref{sec:unicity} we explore the building-theoretic interpretation of the hypotheses of this theorem, and conclude with a discussion of the implications of our results.  In particular, we contrast our situation with those in which the unicity of types is known, and explain how our results appear to reduce the problem to an open question regarding finite groups of Lie type.

\subsection*{Acknowledgements}

The authors gratefully acknowledge the support of both the Centre International de Rencontres Math\'{e}matiques (Luminy) and the Mathematisches Forschungsinstitut Oberwolfach, who hosted them for two-week Research in Pairs stays in 2018 and 2019, respectively.  The warm hospitality of these institutes made this work possible.

\section{Notation} \label{sec:notation}

We now establish some basic notation which will be used freely throughout the paper. Let $\ratk$ be a field which is locally compact and complete relative to a normalized discrete valuation $\val$. Let $\R\subset\ratk$ be ring of integers, with maximal ideal $\PP\subset\R$ and residue field $\resk=\R/\PP$.
Let $\ratk^{\un}$ denote a maximal unramified closure of $\ratk$.

Let $\bG$ be a connected reductive algebraic group defined over $\ratk$, and write $G=\bG(\ratk)$ for its group of $\ratk$-rational points, equipped with its locally profinite topology. 
We reserve the use of bold symbols for algebraic groups and group schemes defined over an $\R$-algebra, latin characters for closed subgroups of $G$, and  $\mathsf{serif}$ font for subgroups of the $\resk$-rational points of the special fibres of certain group schemes over $\R$.  We will often refer to a closed subgroup $H$ of $G$ of the form $\bH(F)$ for some closed subgroup $\bH$ of $\bG$ as having a property if $\bH$ has that property. In particular, this provides a notion of tori, parabolic subgroups and Levi subgroups of $G$ that we will frequently use.

On occasion, we will need to discuss disconnected algebraic groups, the connected component of which is reductive. In such cases, we extend all usual terminology in the obvious way: a torus of such a group is a subgroup which intersects with the connected component as a torus, and similarly for parabolic and Levi subgroups.

For a locally profinite group $H$, we denote by $\Rep(H)$ the category of smooth complex representations of $H$, i.e. the category of (possibly infinite-dimensional) complex vector spaces $V$ equipped with an action of $H$ such that the stabilizer of any vector $v\in V$ is an open subgroup of $H$. Without exception, when we say ``representation'' during the remainder of the paper, we will mean ``smooth complex representation''.

We write $\Ind_H^G$ and $\cInd_H^G$ for the induction and compact induction functors $\Rep(H)\rightarrow\Rep(G)$, respectively. While we will usually omit restriction functors from the notation, we will occasionally denote them by $\Res_H^G:\Rep(G)\rightarrow\Rep(H)$.

Given $H\subseteq G$ and $g\in G$, set ${}^gH = \{ghg^{-1}\mid h\in H\}$ and for any $\rho \in \Rep(H)$, the corresponding representation ${}^g\rho \in \Rep({}^gH)$ is given on $k\in {}^gH$ by ${}^g\rho(k) = \rho(g^{-1}kg)$.

\section{Bruhat--Tits theory}\label{sec:buildings}

First, note that for any algebraic torus $\bT$ defined over $F$, there exists an \emph{lft-N\'{e}ron model} of $\bT$, as defined in \cite{BoschLutkebohmertRaynaud1990}. We denote this lft-N\'{e}ron model by $\bT_\bounded$. This is a smooth affine $\R$-group scheme which is locally of finite type and has generic fibre $\bT_\bounded\times_{\Spec\ \R}\Spec\ F=\bT$. 
Write $\bT_0$ for its connected component.  Then $T_\bounded:=\bT_\bounded(\R)$ is the maximal bounded subgroup of $T:=\bT(F)$, and $T_0:=\bT(\R)$ is a finite index subgroup of $T_\bounded$ called the \emph{parahoric subgroup} of $T$. The \emph{Moy--Prasad filtration} of $T$ is the decreasing filtration $\{T_r\ | \ r\geq 0\}$ of $T_0$ by open subgroups defined by
\[T_r=\{t\in T_0\ | \ \val(\chi(t)-1)\geq r\text{ for all }\chi\in X^*(\bT)\}.
\]

Now, and for the remainder of this section, suppose that $\bG$ is a connected reductive group defined over $F$ with centre $Z(\bG)=\bZ{_\bG}$. Choose a maximal $F$-split torus $\bS$, contained in a maximal $F^{\un}$-split torus $\bS^{\un}$.  Since $\bG$ is quasi-split over $F^{\un}$, the centralizer $\bC$ of $\bS^{\un}$ is a (maximal) torus defined over $F$; in any case it is a minimal Levi subgroup of $\bG$.

Let $\Phi = \Phi(\bG,\bS,F)$ be the roots of $\bS$ in $\bG$ defined over $F$, and let $\Psi=\Psi(\bG,\bS,F)$ be the associated system of affine roots. As described carefully in \cite[\S2.2]{Fintzen2015}, the root subgroup $U_\alpha\subseteq G$, for $\alpha \in \Phi$, admits a filtration by compact open subgroups $U_\psi$ indexed by those $\psi \in \Psi$ with gradient $\alpha$.

Let $X_*(\bS)$ denote the group of cocharacters of $\bS$.  The affine space $\apart = \apart(\bG,\bS,F) = X_*(\bS)\otimes_\Z \bbR$, called the apartment defined by $\bS$, carries a hyperplane structure defined by $\Psi$.  To each point $x\in \apart$, F.~Bruhat and J.~Tits \cite{BruhatTits1984} associated a \emph{parahoric subgroup}, which is a smooth affine $\R$-group scheme $\bG_{x,0}$ with the following properties:
\begin{enumerate}[(i)]
\item the generic fibre $\bG_{x,0}\times_{\Spec\ \R}\Spec\ \ratk$ of $\bG_{x,0}$ is equal to $\bG$;
\item the special fibre $\bG_{x,0}\times_{\Spec\ \R}\Spec\ \resk$ of $\bG_{x,0}$ is a connected reductive algebraic group over $\resk$;
\item  the group $G_{x,0}:=\bG_{x,0}(\R)$ of $\R$-points of $\bG_{x,0}$ is compact and open in $G$, and is given by
$$G_{x,0}=\langle C_0,U_\psi\ |\ \psi\in\Psi\ :\ \psi(x)\geq 0\rangle.
$$
where $C_0$ is defined as above if $\bC$ is a torus, and by Galois descent from $\bC_{x,0}(E)$ for a splitting field $E$ of $\bC$ otherwise (see Remark~\ref{rem:not-quasi-split}); the group $C_0$ is independent of the choice of $x$ in either case.
\end{enumerate}

The (enlarged or extended) \emph{Bruhat-Tits building} of $G$ is obtained by gluing together the apartments defined by all maximal $F$-split tori of $\bG$, as follows.  Set $\buil(G)=\buil(\bG,F):=(G\times\apart(\bG,\bS,F))/\sim$, where $(g,x)\sim(g',x')$ if and only if there exists an $n\in N_G(\bS(F))$ such that $nx=x'$ and $g^{-1}g'n\in G_{x,0}$. Then we identify $\{(g,x)\mid x\in \apart\}$ with the apartment corresponding to ${}^g\bS$.

Then $\buil(G)$ is a non-positively curved geodesic (CAT(0)) metric space.  The left-regular action of $G$ on $G\times\apart(\bG,\bS,F)$ descends to an action of $G$ on $\buil(G)$ via isometries.  Given any two points $x,y\in \buil(G)$, there exists an apartment $\apart$ containing both.  The geodesic $[x,y]$ is then a line segment in this apartment.

\begin{proposition}[{Bruhat--Tits \cite{BruhatTits1984}}]\label{prop:Gx-def}
For each point $x\in\buil(G)$, there exists a smooth affine $\R$-group scheme $\bG_x$ with finite component group such that:
\begin{enumerate}[(i)]
\item the group $G_x:=\bG_x(\R)$ of $\R$-points of $\bG_x$ is compact and equal to $\mathrm{Stab}_G(x)$;
\item for any apartment $\apart$ containing $x$, the connected component of $\bG_x$ coincides with the parahoric group scheme $\bG_{x,0}$ defined relative to $\apart$.
\end{enumerate}
\end{proposition}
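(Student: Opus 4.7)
The plan is to build $\bG_x$ as a group scheme obtained from the connected parahoric group scheme $\bG_{x,0}$ by gluing in a finite group of components that accounts for the elements of the stabilizer which act nontrivially on the local structure at $x$. First I would fix an apartment $\apart = \apart(\bG,\bS,F)$ containing $x$ and invoke the Bruhat--Tits construction (properties (i)--(iii) just recalled) to obtain the smooth affine connected $\R$-group scheme $\bG_{x,0}$ with generic fibre $\bG$, connected reductive special fibre, and $\bG_{x,0}(\R) = G_{x,0}$ an open subgroup of $\mathrm{Stab}_G(x)$.

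Second, I would analyse the discrete quotient $\mathrm{Stab}_G(x)/G_{x,0}$. Since $\mathrm{Stab}_G(x)$ preserves the bounded set $\{x\}$, standard results of Bruhat--Tits show it is a bounded, hence compact, open subgroup of $G$. Because $G_{x,0}$ is open of finite index in any such compact supergroup inside the stabilizer of $x$, the group $\pi_0(x) := \mathrm{Stab}_G(x)/G_{x,0}$ is finite; this is the group that will serve as the component group of $\bG_x$. Moreover, conjugation by any lift $g \in \mathrm{Stab}_G(x)$ of an element of $\pi_0(x)$ permutes the affine roots vanishing at $x$ and the associated root subgroup filtrations, and so induces an $\R$-automorphism of $\bG_{x,0}$; this is the input needed to define the scheme structure on the extension.

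Third, I would define $\bG_x$ on the category of $\R$-algebras as the fpqc sheaf sending $R$ to the subgroup of $\bG(R \otimes_\R F)$ consisting of those elements whose image in $\bG(F^{\un}\otimes_\R R)$ preserves the lattice data attached to $x$ in the sense of \cite{BruhatTits1984}; equivalently, one may realise $\bG_x$ as the semidirect-product-type construction $\bG_{x,0} \rtimes \pi_0(x)$ built from the conjugation action of the previous step, taking $\bG_x = \coprod_{c \in \pi_0(x)} \bG_{x,0}\cdot g_c$ for any choice of lifts $g_c$. In either formulation one checks directly that $\bG_x$ is a smooth affine $\R$-group scheme with generic fibre $\bG$, identity component $\bG_{x,0}$, and component group $\pi_0(x)$; the $\R$-points are exactly $\mathrm{Stab}_G(x)$ by construction, verifying (i) and (ii) for the chosen apartment.

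Finally, I would verify independence of the apartment $\apart$. If $\apart'$ is another apartment through $x$, then by the transitivity of $G_{x,0}$ on apartments containing $x$ (a standard consequence of the axioms of the building) there is some $g \in G_{x,0}$ mapping $\apart'$ to $\apart$, and conjugation by $g$ provides a canonical $\R$-isomorphism between the two candidate models of $\bG_x$; this identification is well defined up to inner automorphism by $G_{x,0}$, yielding a canonical $\bG_x$. The main technical obstacle I expect is the verification that the conjugation action of lifts $g_c$ actually extends to an $\R$-automorphism of the smooth $\R$-scheme $\bG_{x,0}$ and not merely to its generic fibre; this is handled by appealing to the universal property of the Bruhat--Tits smoothening, which guarantees that any $F$-automorphism of $\bG$ preserving the relevant concave function on the root groups extends uniquely to an $\R$-automorphism of $\bG_{x,0}$.
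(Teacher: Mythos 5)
The paper does not prove this proposition: it cites it directly to Bruhat--Tits \cite{BruhatTits1984}, so there is no ``paper's own proof'' against which to compare your argument. Your sketch does capture the broad strategy underlying the reference, namely to start from the connected parahoric model $\bG_{x,0}$ and enlarge it by a finite group of components controlled by $G_x/G_{x,0}$.

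That said, the scheme-theoretic construction you propose in the third step is not quite right. Taking $\bG_x = \coprod_{c\in\pi_0(x)} \bG_{x,0}\cdot g_c$ (or, equivalently, a semidirect product $\bG_{x,0}\rtimes\pi_0(x)$) would produce a group scheme whose \emph{generic} fibre has $|\pi_0(x)|$ connected components, since each coset would have generic fibre isomorphic to $\bG$. But the group scheme in the proposition must have generic fibre equal to $\bG$ itself, which is connected; the component group of $\bG_x$ is a phenomenon that lives entirely on the special fibre. Moreover, for $g_c\in G_x\setminus G_{x,0}$, right translation by $g_c$ is an automorphism of the generic fibre $\bG$ but typically does \emph{not} extend to an $\R$-automorphism of the scheme $\bG_{x,0}$, so the ``cosets'' $\bG_{x,0}\cdot g_c$ are not well-defined $\R$-schemes in the first place. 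The actual construction in Bruhat--Tits proceeds differently: one realises $\bG_x$ as (a smoothening of) the schematic closure of the bounded group $G_x$ inside a suitable affine $\R$-model of $\bG$, from which one reads off that its generic fibre is $\bG$, its $\R$-points recover $G_x$, and its relative identity component is $\bG_{x,0}$. Your final paragraph's appeal to a smoothening procedure is in the right spirit, but it needs to replace --- not supplement --- the coproduct description.
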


\begin{remark}
We will often need to take great care in distinguishing between the point stabilizer subgroups $G_x$ and the parahoric subgroups $G_{x,0}$. One convenient method of describing the difference between these two groups is to use the \emph{Kottwitz homomorphism}, as defined in \cite{Kottwitz1997}. This is a homomorphism $\kappa$ from $G$ to the \emph{algebraic fundamental group} $\pi_1^\alg(G)$ of $G$, as in \cite{Borovoi1998}, which has the property that, for any $x\in\buil(G)$, one has $G_{x}\cap\ker\kappa = G_{x,0}$ \cite[Appendix]{PappasRappoport2008}. In fact, the Kottwitz homomorphism can be used to describe elements of $G_x\backslash G_{x,0}$ rather more generally: given a compact element $g\in G$, there exists a point $x\in\buil(G)$ such that $g\in G_x\backslash G_{x,0}$ if and only if $\kappa(g)$ is a non-trivial torsion element of $\pi_1^\alg(G)$.
\end{remark}

For any $x\in \buil(G)$, choose an apartment $\apart(\bG,\bS,F)$ containing $x$. A.~Moy and G.~Prasad defined a filtration $\{G_{x,r}\ | \ r\geq 0\}$ of $G_{x,0}$ by open normal subgroups by setting
\[G_{x,r}=\langle C_r,U_\psi\ | \ \psi\in\Psi(\bG,\bS,\ratk)\ : \ \psi(x)\geq r\rangle,
\]
where again, in the non-quasi-split case, the filtration subgroup $C_r$ of $C=\bC(F)$ is defined by Galois descent from an appropriate splitting field.

One final convenient observation is that the Moy--Prasad filtration groups $G_{x,r}$ are actually \emph{schematic}, which is to say that there are natural smooth affine $\R$-group schemes $\bG_{x,r}$ with generic fibre $\bG$ such that $\bG_{x,r}(\R)=G_{x,r}$ \cite{Yu2015}.  For example, if $x\in \apart$ we have $\bU_{\alpha,x,r}(\R) = \bigcup_\psi U_\psi$, where this union is over all affine roots $\psi$ of $\apart$ with gradient $\alpha$ such that $\psi(x)\geq r$.

Write $G_{x,r+}=\bigcup_{s>r}G_{x,s}$ and $G_{x,r:r+}=G_{x,r}/G_{x,r+}$. Then $G_{x,0:0+}$ coincides with the group of $\resk$-rational points of the special fibre of $\bG_{x,0}$, and is therefore a finite group of Lie type, and $G_{x,r:r+}$ is an abelian $p$-group for $r>0$.

\begin{remark}\label{rem:not-quasi-split}
When $\bG$ is not $F$-quasi-split, the $\bG$-centralizer $\bC$ of a maximal $F$-split torus $\bS\subset\bG$ need not be a torus. Since there exists a finite unramified Galois extension $E/F$ such that $\bG$ is $E$-quasi-split, we define the building by Galois descent from $E$. Specifically, one may choose a maximal $E$-split torus $\bT$ containing $\bS$ as its maximal $F$-split subtorus.  The apartment $\apart(\bG,\bT,E)$ then has an action of $\Gal(E/F)$ such that $\apart(\bG,\bT,E)^{\Gal(E/F)}=\apart(\bG,\bS,F)$. Together with the action of $\Gal(E/F)$ on $\bG(E)$, this allows one to define an action of $\Gal(E/F)$ on $\buil(\bG(E))$, and we define $\buil(G):=\buil(\bG(E))^{\Gal(E/F)}$. With this, we are able to define group schemes $\bG_{x,0}$, $\bG_x$ generalizing those of Proposition \ref{prop:Gx-def}, as well as $\bG_{x,r}$ as in \cite{Yu2015}.
\end{remark}

\subsection{The reduced building and other properties}

The \emph{reduced building} of $G$ is $\buil^\red(G)=\buil(G_\der)$, where $G_\der=\bG_\der(F)$, for $\bG_\der\subset\bG$ the derived subgroup. One has $\buil(G)\cong \buil^\red(G)\times X_*(\bZ_\bG)\otimes_{\mathbb{Z}}\mathbb{R}$, and so there is a projection map $\buil(G)\rightarrow\buil^\red(G)$, which we denote by $x\mapsto[x]$. Later it will be convenient to abbreviate by $\tilde{[x]}$ the fibre of this projection over $[x]$.

An advantage of working with $\buil^{\red}(G)$ is that it is a polysimplicial complex; for example, we refer to a point of $\buil(G)$ as a vertex if its image $[x]$ is one.  We also have that $\bG_x=\bG_y$ if and only if $[x]=[y]$.  However, given $x\in\buil(G)$, the stabilizer $G_{[x]}$ of $[x]\in\buil^\red(G)$ is only a compact-modulo-centre subgroup of $G$, the maximal compact subgroup of which coincides with $G_x$; in fact $G_{[x]}=N_G(G_x)$.

By Proposition~\ref{prop:Gx-def}, $G_x$ is the stabilizer of $x \in \buil(G)$ under the action of $G$.  More generally, for a subset $\Omega \subseteq \buil(G)$ we use the notation
$$
G_\Omega = \{ g\in G \mid g x=x \ \forall x\in \Omega\}
$$
for the pointwise stabilizer of $\Omega$. 
Given a subgroup $H$ of $G$, we write $\buil(G)^{H}$ for its set of fixed points, which we can equivalently write as $\{x\in \buil(G) \mid H\subseteq G_x\}$.  

Since $\buil(G)$ is a CAT(0) space, if $g\in G$ fixes both $x$ and $y$, then it fixes the geodesic $[x,y]$.  This has several consequences.  For one, if $z\in [x,y]$ then $G_x \cap G_y \subseteq G_z$, a fact we use frequently in the sequel.  For another, it follows that for any $H$, $\buil(G)^{H}$ is convex; its image in $\buil^{\red}(G)$ is bounded if, for example, $H$ is compact open. 

%
%
%

\section{The theory of types}\label{sec:types}

In this section, we recall the theory of types in the abstract sense laid out in \cite{BushnellKutzko1998}; the objective of this paper is to describe to what extent all such types, in the tame case, may be described in terms of the types constructed in \cite{Yu2001}.

\subsection{Bernstein decomposition}

A \emph{cuspidal pair} in $G$ is a pair $(M,\rho)$ consisting of a Levi subgroup $M$ of $G$ and an irreducible supercuspidal representation $\rho$ of $M$. Given $\pi$ a smooth irreducible representation of $G$, Jacquet's theorem implies there exists a unique $G$-conjugacy class of cuspidal pairs $(M,\rho)$ such that $\pi$ is isomorphic to a subquotient of $\Ind_P^G\ \rho$ for some parabolic subgroup $P$ of $G$ with Levi factor $M$. We call this conjugacy class the \emph{cuspidal support} of $\pi$.

We say that cuspidal pairs $(M,\rho)$ and $(M',\rho')$ are \emph{$G$-inertially equivalent} if there exists an unramified character $\omega$ of $M'$ and a $g\in G$ such that $^gM=M'$ and $^g\rho\simeq\rho'\otimes\omega$. We write $\frak{s}=[M,\rho]_G$ for the $G$-inertial equivalence class of $(M,\rho)$. The \emph{inertial support} of an irreducible representation of $G$ is the inertial equivalence class of its cuspidal support. We say that two irreducible representations are \emph{inertially equivalent} if they have the same inertial support, and write $\frak{I}(\pi)$ for the inertial equivalence class of $\pi$.

Write $\frak{B}(G)$ for the set of $G$-inertial equivalence classes of cuspidal pairs. Given any subset $\frak{S}$ of $\frak{B}(G)$, denote by $\Rep^{\frak{S}}(G)$ the full subcategory of $\Rep(G)$ consisting of those representations every irreducible subquotient of which has inertial support contained in $\frak{S}$.  By a theorem of Bernstein \cite{Bernstein1984}, we have a categorical decomposition 
\[\Rep(G)=\prod_{\frak{s}\in\frak{B}(G)}\Rep^{\frak{s}}(G).
\]

\subsection{Types and covers}

\begin{definition}
Let $\frak{S}\subset\frak{B}(G)$ be a finite set, and let $(J,\lambda)$ be a pair consisting of an irreducible representation $\lambda$ of a compact open subgroup $J$ of $G$.
\begin{enumerate}[(i)]
\item We say that $(J,\lambda)$ is $\frak{S}$\emph{-typical} if, for any irreducible representation $\pi$ of $G$ such that $\Hom_J(\lambda,\pi)\neq 0$, one must have $\pi\in\Rep^{\frak{S}}(G)$.
\item We say that $(J,\lambda)$ is an \emph{$\frak{S}$-type} if it is $\frak{S}$-typical and every irreducible representation $\pi$ in $\Rep^{\frak{S}}(G)$ satisfies $\Hom_J(\lambda,\pi)\neq 0$.
\end{enumerate}
\end{definition}

As simple applications of Frobenius reciprocity and the transitivity of compact induction, one immediately deduces a few properties of types:
\begin{enumerate}[(i)]
\item For any $g\in G$, the pair $(^gJ,^g\lambda)$ is an $\frak{S}$-type if and only if $(J,\lambda)$ is an $\frak{S}$-type.
\item If $(J,\lambda)$ is an $\frak{S}$-type, $K\supset J$ is a compact open subgroup of $G$ and $\tau$ is an irreducible representation of $K$ satisfying $\Hom_J(\lambda,\tau)\neq 0$, then $(K,\tau)$ is $\frak{S}$-typical.
\item If $\frak{s}=[G,\pi]_G$ is an inertial equivalence class of supercuspidal representations of $G$, then $(J,\lambda)$ is an $\frak{s}$-type if and only if it is $\frak{s}$-typical.
\end{enumerate}
In particular, combining (ii) and (iii) shows that any irreducible representation of a \emph{maximal} compact subgroup of $G$ which contains some $[G,\pi]_G$-type $(J,\lambda)$ must itself be a $[G,\pi]_G$-type.

Suppose now $M$ a Levi subgroup of $G$, and $(J_M,\lambda_M)$ an $\frak{S}_M$-type, where $\frak{S}_M=\{[L_i,\rho_i]_M\}\subset\frak{B}(M)$ is a finite set.  In \cite{BushnellKutzko1998}, C.~Bushnell and P.~Kutzko define the notion of a \emph{cover} $(J,\lambda)$ of $(J_M,\lambda_M)$ which, if it exists, is an $\frak{S}$-type, where $\frak{S}=\{[L_i,\rho_i]_G\}$.  In this case, we have the following analogue to the above lemma.

\begin{lemma}\label{lem:induced-from-type-nsc}
Suppose that $(J_M,\lambda_M)$ is an $[M,\rho]_M$-type and $(J,\lambda)$ is a $G$-cover of $(J_M,\lambda_M)$. Then $\cInd_J^G\ \lambda$ has a filtration by subrepresentations such that each successive quotient is a finite-length representation isomorphic to $\cInd_P^G\ \rho'$ for some $\rho' \in \frak{I}(\rho)$ and some parabolic subgroup $P$ of $G$ with Levi factor $M$.
\end{lemma}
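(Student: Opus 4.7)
The approach is to identify $\cInd_J^G \lambda$ with a parabolic induction from $M$, and then use the structure of supercuspidal Bernstein blocks to decompose accordingly.

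The first step is to establish, via the Bushnell--Kutzko theory of covers, an isomorphism $\cInd_J^G \lambda \cong \Ind_P^G ( \cInd_{J_M}^M \lambda_M )$ (up to a $\delta_P^{1/2}$-twist that preserves inertial classes). This can be proved either by a direct Mackey-type computation exploiting the Iwahori decomposition $J = (J \cap U^-) J_M (J \cap U)$ together with the triviality of $\lambda$ on the unipotent pieces, or more elegantly by appealing to the Hecke algebra injection $t : \mathcal{H}(M,\lambda_M) \hookrightarrow \mathcal{H}(G,\lambda)$ from \cite{BushnellKutzko1998}: parabolic induction corresponds, under the categorical equivalences furnished by the types, to extension of scalars along $t$, and applying this to the regular right module over $\mathcal{H}(M,\lambda_M)$ (which corresponds to $\cInd_{J_M}^M \lambda_M$) recovers $\mathcal{H}(G,\lambda)$ (which corresponds to $\cInd_J^G \lambda$).

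The second step is to produce a suitable filtration of $\sigma := \cInd_{J_M}^M \lambda_M$. Every irreducible subquotient of $\sigma$ lies in $\mathfrak{I}(\rho)$ by the defining property of the type $(J_M,\lambda_M)$. Since $\rho$ is supercuspidal, the Hecke algebra $\mathcal{H}(M,\lambda_M)$ is commutative---essentially a Laurent polynomial algebra parametrised by the group of unramified twists of $\rho$ modulo its stabiliser---and the regular module over such an algebra admits a (necessarily infinite) filtration whose successive quotients are of the form $\mathcal{H}/\mathfrak{m}$ for maximal ideals $\mathfrak{m}$, corresponding via the equivalence to irreducible representations $\rho' \in \mathfrak{I}(\rho)$. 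Applying the exact functor $\Ind_P^G$ to this filtration then yields a filtration of $\cInd_J^G \lambda$ whose successive quotients are of the form $\Ind_P^G \rho' = \cInd_P^G \rho'$ (these agree because $P \backslash G$ is compact) for $\rho' \in \mathfrak{I}(\rho)$, each of finite length by the standard theory of parabolic induction of irreducible supercuspidals.

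The principal obstacle lies in the first step, which is the technical heart of the argument: it requires the full strength of the cover hypothesis, either through careful Mackey-type bookkeeping with attention to the unipotent radicals or through the Hecke-algebraic machinery of \cite{BushnellKutzko1998}. Once this identification is in hand, the remainder is comparatively routine, the main subtlety being that the filtration of $\sigma$ produced by commutative algebra is infinite and must be handled as such.
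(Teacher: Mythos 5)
Your proposal follows essentially the same two-step route as the paper: your first step (identifying $\cInd_J^G\lambda$ with $\Ind_P^G(\cInd_{J_M}^M\lambda_M)$ via the Iwahori decomposition of the cover, or equivalently via the Hecke-algebra injection) is precisely what the paper invokes from Blondel's Th\'eor\`eme 2, and your second step (filtering $\cInd_{J_M}^M\lambda_M$ using the commutative Hecke algebra of the supercuspidal type on $M$ and then applying the exact functor $\Ind_P^G$) is what the paper delegates to \cite[Proposition 5.2]{BushnellKutzko1998}. The only point worth tightening is the commutative-algebra step: since $\cInd_{J_M}^M\lambda_M$ is not locally of finite length, the filtration must be a \emph{descending} exhaustive one (and for a Laurent polynomial algebra in several variables the maximal ideals are not isomorphic to the regular module, so the construction of that chain requires a short argument), a subtlety you flag but do not resolve.
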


\begin{proof}
Fix a parabolic subgroup $P=MN$ of $G$ with Levi factor $M$. By \cite[Th\'{e}or\`{e}me 2]{Blondel2005}, we have $\cInd_J^G\ \lambda=\cInd_{J_MN}^G\ \lambda_M\otimes\mathds{1}_N$, which yields
\[\cInd_J^G\ \lambda=\Ind_{MN}^G\ \left( \cind_{J_M}^M \lambda_M\right)\otimes\mathds{1}_N.
\]
The claim then follows from \cite[Proposition 5.2]{BushnellKutzko1998}.
\end{proof}

\section{Kim-Yu types}\label{sec:kimyu}

In this section, we recall the construction of types for essentially tame representations by J.K.~Yu and J.-L.~Kim as given in \cite{Yu2001} and \cite{KimYu2017}.  We largely follow the treatment given in \cite{HakimMurnaghan2008}, but we depart slightly from their notational conventions in some cases, so as to use $J$ for the compact open subgroups supporting types, as in \cite{BushnellKutzko1998}.  
For the reader familiar with these constructions the main substitutions are (a) $J$ in place of $K$ as the root letter in notation to denote an open compact-mod-centre subgroup,  (b) $\cJ$ in place of $J$ as the root letter for the groups like $(G^i,G^{i+1})_{r_i,s_i}$, (c) $H_+$ in place of $K_+$ as the largest subgroup on which the type acts via a character. 

\subsection{Data}

\begin{definition}
A \emph{twisted Levi sequence} in $\bG$ is a sequence $\vec{\bG}=(\bG^0,\dots,\bG^d)$ of closed $F$-subgroups $\bG^i$ of $\bG$ such that $\bG^0 \subsetneq \bG^1\subsetneq \cdots \subsetneq \bG^{d}=\bG$ and such that there exists a finite algebraic extension $E/F$ such that each group $\bG^i(E)$ is a Levi subgroup of $\bG(E)$.
We say that $\vec{\bG}$ \emph{splits over $E$}.   A \emph{generalized twisted Levi sequence} is defined similarly but without the condition that adjacent elements be distinct.
\end{definition}

We require the definition of a (not necessarily cuspidal) datum given in \cite[7.2]{KimYu2017}. Due to their complexity, (\ref{item:M^i}), (\ref{item:gen-char}) and (\ref{item:gen-emb}) are not fully defined below.   Instead, the generic embeddings of (\ref{item:gen-emb}) will be discussed in detail where they are used in the proof of Lemma~\ref{lem:define-point-u-xi}; the genericity of the characters in (\ref{item:gen-char}) does not come into question in our construction, so we do not review it; but we will recap the construction of the groups $\bM^i$ of (\ref{item:M^i}) in Remark~\ref{rem:gen-twisted-levi}.

\begin{definition}\label{def:datum}
A \emph{datum} in $G$ is a tuple $\Sigma=((\vec{\bG},\bM^0),(x,\{\iota\}),(\KimYugroup,\sigma),\vec{r},\vec{\phi})$ consisting of
\begin{enumerate}[(i)]
\item a twisted Levi sequence $\vec{\bG}$ in $\bG$ which splits over a tamely ramified extension $E/F$;
\item \label{item:M^i} a Levi subgroup $\bM^0$ of $\bG^0$, which defines $F$-Levi subgroups $\bM^i$ of $\bG^i$ such that $\vec{\bM}$ is a generalized twisted Levi sequence of $\bM=\bM^d$ as in \cite[2.4]{KimYu2017}; 
\item a point $x\in\buil(M^0)$ for which $M_{x,0}^0$ is a maximal parahoric subgroup of $M^0=\bM^0(F)$;
\item a compact open subgroup $\KimYugroup$ of $M^0_x$ containing $M^0_{x,0}$ as a normal subgroup;
\item an irreducible representation $\sigma$ of $\KimYugroup$ such that $\sigma|_{M^0_{x,0}}$ is a sum of cuspidal representations of $M_{x,0:0+}^0$;
\item a sequence $\vec{r}=(r_0,\dots,r_d)$ of positive real numbers satisfying $r_0<r_1<r_2<\cdots<r_{d-1}\leq r_d$ if $d>0$ and $0\leq r_0$ if $d=0$;
\item \label{item:gen-char} a sequence $\vec{\phi}=(\phi^0,\dots,\phi^d)$ of (quasi-)characters $\phi^i$ of $G^i$, such that for $\phi^i$ is $G^{i+1}$-generic of depth $r_i$ in the sense of \cite[Def. 3.7]{HakimMurnaghan2008} and $\phi^d$ is trivial if $r_d=r_{d-1}$; and
\item  \label{item:gen-emb} an $\vec{s}$-generic (relative to $x$) diagram of embeddings $\{\iota\}$  of the buildings $\buil(M^i)$ and $\buil(G^i)$ (following the given inclusions of groups) in the sense of \cite[3.5]{KimYu2017}, where $\vec{s}=(s_{-1},s_0,s_1,\ldots, s_{d-1})$ with $s_{-1}=0$ and $s_i=r_i/2$ for $0\leq i< d$.
\end{enumerate}
In the case that $M^0=G^0$ and $Z(\bG^0)/\bZ_{\bG}$ is anisotropic, we say that $\Sigma$ is a \emph{cuspidal datum}; this is the case considered in \cite{Yu2001}. For a cuspidal datum, the choice of embeddinggs $\{\iota\}$ turns out to be unimportant, so we instead write briefly $\Sigma=(\vec{\bG},x,\KimYusc,\sigma,\vec{r},\vec{\phi})$, omitting the group $\KimYusc$ when $\KimYusc=G^0_x$ is maximal.
\end{definition}

\begin{remark}\label{rem:gen-twisted-levi}
The groups $\bM^i$ appearing in (ii) above are defined in \cite[\S 2.4]{KimYu2017} as follows.  Let $Z_\mathrm{s}(\bM^0)^\circ$ 
be the maximal $F$-split torus in the centre of $\bM^0$. For each $0\leq i\leq d$, let $\bM^i$ be the $\bG^i$-centralizer of $Z_\mathrm{s}(\bM^0)^\circ$. This defines a sequence $\vec{\bM}=(\bM^0,\bM^1,\ldots,\bM^d)$ that, as shown in \cite[\S 2.4]{KimYu2017}, is a generalized twisted Levi sequence in $\bM=\bM^d$, defined over $F$ and split over $E$.  We set $M^i=\bM^i(F)$.
\end{remark}

\subsection{Groups associated to the datum}
For the remainder of the section, fix a datum $\Sigma=((\vec{\bG},\bM^0), (x,\{\iota\}), (\KimYugroup,\sigma),\vec{r},\vec{\phi})$. Set $J_G^0=J_M^0G_{x,0}^0$; this is a subgroup of $G_x^0$ such that $J_G^0/G^0_{x,0+}=J_M^0/M^0_{x,0+}$.

For each $0\leq i\leq d$, define compact open subgroups of $G^i$ by
\begin{align*}
J^i(\Sigma,G)&=J_G^0G_{x,s_0}^1\cdots G_{x,s_{i-1}}^i; \text{ and}\\
H^i(\Sigma,G)&=J_G^0 G_{x,s_0+}^1\cdots G_{x,s_{i-1}+}^i.
\end{align*}
These are in fact the $\R$-points of the analogous affine normal $\R$-group schemes, each of whose generic fibres are equal to $\bG^i$. Where the pair $(\Sigma,G)$ is understood, we may choose to omit it from the notation.  

The induced Moy--Prasad filtration on each of these subgroups is defined, for each $t\geq 0$ and any $0\leq i \leq d$, as
$$
J^i_t:=J^i\cap G_{x,t},\text{ and}\ 
H^i_t:=H^i\cap G_{x,t}.
$$
We abbreviate $J^i_+:=J^i_{0+}$ and $H^i_+=H^i_{0+}$. Then we have a tower of compact open subgroups of $G^i$ given by
$$
H^i_+ \subseteq J^i_+ \subseteq J^i_0 \subseteq J^i
$$
such that $J^i_0/J^i_{+} \cong G^0_{x,0:0+}$ for every $0\leq i\leq d$.  By \cite[Proposition 4.3(b)]{KimYu2017}, we also have
\begin{equation}\label{eq:lift}
J^i/J^i_+ \cong \KimYugroup/M^0_{x,0+}.
\end{equation}

For each $0\leq i<d-1$, our next groups are defined first as subgroups of $\bG^{i+1}(E)$.  Let $\bT$ be a maximal $E$-split $F$-torus of $\bG^0$ and for each $0\leq j\leq d$ set $\Phi^j = \Phi(\bG^j,\bT,E)$.  Then we may define
\begin{align*}
\cJ^{i+1}(\Sigma,G,\R_E):=&\langle\bT_{r_i}(\R_E),\bU_{\alpha,x,r_i}(\R_E),\bU_{\beta,x,s_i}(\R_E)\ | \ \alpha\in\Phi^i,\ \beta\in\Phi^{i+1}\setminus \Phi^i\rangle;\text{ and}\\
\cJ^{i+1}_+(\Sigma,G,\R_E):=&\langle\bT_{r_i}(\R_E),\bU_{\alpha,x,r_i}(\R_E),\bU_{\beta,x,s_i+}(\R_E)\ | \ \alpha\in\Phi^i,\ \beta\in\Phi^{i+1}\setminus\Phi^i\rangle.
\end{align*}
We then define two more compact open subgroups of $G^{i+1}$ 
\begin{equation}\label{eq:def-script-J}
\cJ^{i+1}(\Sigma,G):=\cJ^{i+1}(\Sigma,G,\R_E)\cap G,\text{ and}\ 
\cJ^{i+1}_+(\Sigma,G):=\cJ^{i+1}_+(\Sigma,G,\R_E)\cap G.
\end{equation}
In the literature, the preceding is often abbreviated by writing
$$
\cJ^{i+1}(\Sigma,G) = (G^i,G^{i+1})_{r_i,s_i} \text{ and}\ \cJ^{i+1}_+(\Sigma,G)=(G^i,G^{i+1})_{r_i,s_i+},
$$
which we will use in the sequel for convenience.  Again, we may omit $(\Sigma,G)$ where this is clear from context.

Note that $\cJ^{i+1}$ can be thought of as a kind of complement to $J^i$ in $J^{i+1}$ in the sense that 
\begin{equation}\label{eq:product}
J^i \cJ^{i+1}=J^{i+1};
\end{equation}
but crucially, $J^i\cap \cJ^{i+1} = G^i_{x,r_i}$ is non-trivial.  These groups give us an alternate description of $H$ \cite[\S 3]{HakimMurnaghan2008} as
\begin{equation}\label{eq:product2}
H = J_G^0\cJ^1_+\cdots \cJ^d_+.
\end{equation}

Finally, from the datum $\Sigma$ (and Remark~\ref{rem:gen-twisted-levi}) we can also extract the tuple $\Sigma_M:=(\vec{\bM},x,J_M^0,\sigma,\vec{r},\vec{\phi})$.  The following lemma summarizes results proven in \cite[7.4]{KimYu2017}, using \cite{Yu2001}.

\begin{lemma}
The tuple $\Sigma_M$ is a cuspidal datum in $M$ and, for $0\leq i\leq d$, one has equalities
\begin{align*}
J^i(\Sigma,M)&:=J^i(\Sigma,G)\cap M^i=J^i(\Sigma_M,M) \text{ and}\\
H^i(\Sigma,M)&:=H^i(\Sigma,G)\cap M^i=H^i(\Sigma_M,M).
\end{align*}
\end{lemma}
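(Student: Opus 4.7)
The plan is to verify each requirement of Definition~\ref{def:datum} for $\Sigma_M$ (as a cuspidal datum in $M$) and then to establish the group-theoretic identities by compatibility of Moy--Prasad filtrations with Levi subgroups. Most of the data transfer immediately: by Remark~\ref{rem:gen-twisted-levi}, $\vec{\bM}$ is a generalized twisted Levi sequence in $\bM$ split over the same tamely ramified extension $E$; the point $x$ lies in $\buil(M^0)$ with $M_{x,0}^0$ maximal parahoric by hypothesis; and the triple $(J_M^0,\sigma)$, $\vec r$, $\vec\phi$ are already of the required form. Only two points merit serious attention: the anisotropy condition and the preservation of genericity.

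For the anisotropy, note that $\bM=\bM^d$ is defined as the $\bG$-centralizer of $Z_{\mathrm{s}}(\bM^0)^\circ$, so this $F$-split torus is central in $\bM$, i.e.\ $Z_{\mathrm{s}}(\bM^0)^\circ\subseteq \bZ_{\bM}$. Since $Z_{\mathrm{s}}(\bM^0)^\circ$ is the \emph{maximal} $F$-split subtorus of $Z(\bM^0)$, any $F$-split subtorus of the quotient $Z(\bM^0)/\bZ_{\bM}$ lifts to an $F$-split subtorus of $Z(\bM^0)$ already absorbed into $\bZ_{\bM}$, so the quotient is anisotropic. For the genericity, the $G^{i+1}$-genericity of $\phi^i$ translates into a non-degeneracy condition on its restriction to each root group $\bU_\alpha(E)$ with $\alpha\in\Phi(\bG^{i+1},\bT,E)\setminus\Phi(\bG^i,\bT,E)$; since $\Phi(\bM^{i+1},\bT,E)\setminus\Phi(\bM^i,\bT,E)$ is contained in that set, the same non-degeneracy survives restriction and yields $M^{i+1}$-genericity, as carried out in detail in \cite[\S 7.4]{KimYu2017}.

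For the group identities, I would induct on $i$, relying on the central fact that $G^i_{x,r}\cap M^i=M^i_{x,r}$ for any $r\geq 0$ when $x\in\buil(M^i)\subseteq\buil(G^i)$; this follows from the schematic description of the Moy--Prasad filtrations \cite{Yu2015} and the generic embedding hypothesis encoded in $\{\iota\}$. The base case $i=0$ reduces to the elementary computation $J_M^0 G^0_{x,0}\cap M^0=J_M^0(G^0_{x,0}\cap M^0)=J_M^0 M^0_{x,0}=J_M^0$, valid because $J_M^0\subseteq M^0$. The inductive step exploits the product decomposition $J^{i+1}(\Sigma,G)=J^i(\Sigma,G)\,G^{i+1}_{x,s_i}$ and its $H^i$-analogue, intersecting with $M^{i+1}$ and using the Iwahori-type decomposition of $G^{i+1}_{x,s_i}$ relative to a parabolic of $\bG^{i+1}$ with Levi $\bM^{i+1}$ to peel off the unipotent-radical parts; what remains is $J^i(\Sigma_M,M)\,M^{i+1}_{x,s_i}=J^{i+1}(\Sigma_M,M)$, with the identical argument handling $H^{i+1}$ after replacing $s_i$ by $s_i+$. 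The main obstacle is the bookkeeping in these Iwahori-type decompositions, especially in the non-quasi-split setting where the groups are defined by Galois descent from $E$; here the genericity built into $\{\iota\}$ is exactly what ensures that each factor of the decomposition is stable under both intersection with $M^{i+1}$ and descent along $\Gal(E/F)$, so that the $F$-rational identities follow from their $\R_E$-scheme-theoretic counterparts.
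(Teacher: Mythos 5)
The paper offers no proof of this lemma at all: it is introduced with the sentence ``The following lemma summarizes results proven in \cite[7.4]{KimYu2017}, using \cite{Yu2001},'' so there is nothing in the text to compare your argument against point by point. Read against that background, your proposal is a plausible reconstruction of what a direct proof would look like, and the ingredients you single out --- (a) that $Z_{\mathrm{s}}(\bM^0)^\circ\subseteq\bZ_\bM$ forces $Z(\bM^0)/\bZ_\bM$ to be anisotropic (a quotient of an anisotropic diagonalisable group is again anisotropic, which is cleanest to see on character lattices rather than by the lifting argument you give), (b) that $G^{i+1}$-genericity of $\phi^i$ restricts to $M^{i+1}$-genericity because $\Phi(\bM^{i+1},\bT,E)\setminus\Phi(\bM^i,\bT,E)\subseteq\Phi(\bG^{i+1},\bT,E)\setminus\Phi(\bG^i,\bT,E)$, and (c) an induction on $i$ via $J^{i+1}(\Sigma,G)=J^i(\Sigma,G)\,G^{i+1}_{x,s_i}$ with a Dedekind/Iwahori factorization --- are the correct ones.

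Two cautions. First, the base case uses the modular law $J_M^0 G^0_{x,0}\cap M^0=J_M^0(G^0_{x,0}\cap M^0)$ (valid since $J_M^0\subseteq M^0$), and then needs $G^0_{x,0}\cap M^0=M^0_{x,0}$. That identity at \emph{depth zero} is genuinely more delicate than the $r>0$ case: it is not a formal consequence of the schematic description in \cite{Yu2015}, and it really relies on $x$ being carried to the appropriate facet of $\buil(G^0)$ by the generic embedding so that $G^0_{x,0:0+}\cong M^0_{x,0:0+}$; you should flag this rather than fold it into ``compatibility of Moy--Prasad filtrations.'' Second, for $i\geq 1$ the Dedekind law no longer applies because $G^{i+1}_{x,s_i}\not\subseteq M^{i+1}$, so the step ``intersect with $M^{i+1}$ and peel off the unipotent parts'' is where all the content sits: one needs compatible Iwahori factorizations of $J^i(\Sigma,G)$ \emph{and} of $G^{i+1}_{x,s_i}$ with respect to a parabolic $P=M^{i+1}N$, together with the multiplicativity of these factorizations under the product, and this is exactly the nontrivial verification carried out in \cite[7.4]{KimYu2017}. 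Your sketch is therefore correct in outline, but it defers the same hard work to ``Iwahori-type decompositions'' that the paper defers to Kim--Yu; it is not a shortcut.
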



\subsection{The types $(J(\Sigma,G),\lambda_\Sigma)$ and $(J(\Sigma,M),\lambda_\Sigma)$}

Continuing with the notation as above, the restriction of the character $\phi^i$ to $G^i_{x,r_{i}}$ admits an extension $\hat{\phi}^{i}$ to $\cJ^{i+1}_+(\Sigma,G)$. We write $\cN^{i+1}(\Sigma,G)=\ker\hat{\phi}^{i}$.  Set 
\begin{align*}
\cH^{i+1}(\Sigma,G)&:=\cJ^{i+1}(\Sigma,G)/\cN^{i+1}(\Sigma,G),\\
\cW^{i+1}(\Sigma,G)&:=\cJ^{i+1}(\Sigma,G)/\cJ^{i+1}_+(\Sigma,G), \quad \text{and}\\
\cZ^{i+1}(\Sigma,G)&:=\cJ^{i+1}_+(\Sigma,G)/\cN^{i+1}(\Sigma,G).
\end{align*}  
Let us omit $(\Sigma,G)$ for brevity.  The following lemma summarizes results proven in \cite[\S 11]{Yu2001}.

\begin{lemma}
The quotient $\cW^{i+1}$ is a finite-dim\-en\-sion\-al $\resk$-vector space. Given $j,j'\in\cJ^{i+1}$, the commutator $[j,j']$ is contained in $G_{x,r_i}^i$, and $\phi^{i}\circ[-,-]$ defines a symplectic form on $\cW^{i+1}$.  Moreover, the quotient $\cH^{i+1}$ is a Heisenberg $p$-group with centre $\cZ^{i+1}$.
\end{lemma}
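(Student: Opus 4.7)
The plan is essentially to reduce each of the three assertions to explicit root–group computations in an apartment containing $x$ together with standard Moy--Prasad machinery, broadly following the argument of \cite[\S 11]{Yu2001}. Throughout I will work after base change to $E$, where $\vec{\bG}$ splits, and descend at the end; for notational clarity let $\bT$ be a maximal $E$-split torus of $\bG^0$ as in the definition of $\cJ^{i+1}$, and set $\Phi^j=\Phi(\bG^j,\bT,E)$.

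First I would establish the description of $\cW^{i+1}$ as an $\resk$-vector space. By the definitions of $\cJ^{i+1}$ and $\cJ^{i+1}_+$ only the factors attached to $\beta\in\Phi^{i+1}\setminus\Phi^i$ change, so one obtains a natural surjection
\[
\prod_{\beta\in\Phi^{i+1}\setminus\Phi^i}\bU_{\beta,x,s_i}(\R_E)/\bU_{\beta,x,s_i+}(\R_E)\twoheadrightarrow \cW^{i+1}(\Sigma,G,\R_E).
\]
Using the Chevalley commutator formula one checks that for $\beta,\beta'\in\Phi^{i+1}\setminus\Phi^i$ the commutator $[u_\beta,u_{\beta'}]$ lies in subgroups indexed by positive integer combinations $m\beta+n\beta'$ at depth $\geq ms_i+ns_i=r_i$; hence it is either in $\bU_{\gamma,x,r_i}$ with $\gamma\in\Phi^i$ (so in $G^i_{x,r_i}\subseteq \cJ^{i+1}_+$), or in $\bU_{\gamma,x,r_i}$ with $\gamma\in\Phi^{i+1}\setminus\Phi^i$ and $r_i>s_i$ (again in $\cJ^{i+1}_+$). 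This shows $\cW^{i+1}$ is abelian, and each factor $\bU_{\beta,x,s_i}/\bU_{\beta,x,s_i+}$ is a finite-dimensional $\resk_E$-vector space by standard Moy--Prasad theory; the $\Gal(E/F)$-invariants then give the required $\resk$-structure.

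Next I turn to the commutator pairing. The computation above already shows $[\cJ^{i+1},\cJ^{i+1}]\subseteq G^i_{x,r_i}\cdot \cJ^{i+1}_+$, and by iterating and using that $\bT_{r_i}$ and $\bU_{\alpha,x,r_i}$ ($\alpha\in\Phi^i$) are already central contributions, one refines this to $[\cJ^{i+1},\cJ^{i+1}]\subseteq G^i_{x,r_i}$. Composing with $\hat\phi^i|_{G^i_{x,r_i}}$, which factors through $G^i_{x,r_i}/G^i_{x,r_i+}$ since $\phi^i$ has depth $r_i$, yields a well-defined map
\[
\langle\,\cdot\,,\,\cdot\,\rangle\colon \cW^{i+1}\times \cW^{i+1}\longrightarrow \mathbb{C}^\times,\qquad \langle \bar j,\bar{j'}\rangle=\phi^i([j,j']).
\]
Bilinearity and alternation follow from the Chevalley formula modulo higher-order terms, which land in $\cJ^{i+1}_+$ and hence in the kernel of $\hat\phi^i\circ[-,-]$ by the same argument. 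For non-degeneracy I would invoke the $G^{i+1}$-genericity of $\phi^i$ of depth $r_i$: this is precisely the condition (via the conditions GE1, GE2 of \cite[Def.~3.7]{HakimMurnaghan2008}) that the linear functional on $\mathfrak{g}^{i+1}_{x,r_i}/\mathfrak{g}^i_{x,r_i}+\mathfrak{g}^{i+1}_{x,r_i+}$ obtained from $\phi^i$ pairs non-trivially with every $\beta\in\Phi^{i+1}\setminus\Phi^i$, which translates via Moy--Prasad isomorphisms to the non-degeneracy of $\langle\,\cdot\,,\,\cdot\,\rangle$.

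Finally, the Heisenberg structure follows formally. Since $\cN^{i+1}=\ker\hat\phi^i|_{\cJ^{i+1}_+}$ contains $[\cJ^{i+1},\cJ^{i+1}_+]$ (as $[\cJ^{i+1},\cJ^{i+1}_+]\subseteq G^i_{x,r_i+}$ by depth considerations) and $\cJ^{i+1}_+\lhd \cJ^{i+1}$, the subgroup $\cZ^{i+1}=\cJ^{i+1}_+/\cN^{i+1}$ is central in $\cH^{i+1}$, and the short exact sequence
\[
1\longrightarrow \cZ^{i+1}\longrightarrow \cH^{i+1}\longrightarrow \cW^{i+1}\longrightarrow 1
\]
realizes $\cH^{i+1}$ as a central extension of a symplectic $\resk$-vector space by $\cZ^{i+1}\hookrightarrow \mathbb{C}^\times$, with commutator pairing precisely the non-degenerate form constructed above; this is the defining property of a Heisenberg $p$-group (the centre being $\cZ^{i+1}$ since the pairing is non-degenerate). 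The pro-$p$ nature of $\cJ^{i+1}_+$ forces $\cZ^{i+1}$ to be a finite cyclic $p$-group, completing the proof.

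The main obstacle is the non-degeneracy step: it is the only place where the arithmetic genericity hypothesis on $\vec{\phi}$ enters, and one must be careful in the wildly ramified situation (here avoided by the tame splitting hypothesis on $\vec{\bG}$) to ensure the Moy--Prasad isomorphism carries $\phi^i$ to a linear functional whose pairing with root spaces can be read off from the genericity conditions.
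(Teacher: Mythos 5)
The paper gives no proof of this lemma; it is stated as a summary of results from \cite[\S 11]{Yu2001}, with only a citation. Your sketch is a plausible reconstruction of the argument in that reference, and the main ingredients --- the Chevalley commutator formula together with Moy--Prasad depth arithmetic to show commutators land at depth $\geq r_i$, the $G^{i+1}$-genericity of $\phi^i$ for non-degeneracy of the pairing, and the abstract characterization of a Heisenberg $p$-group as a central extension of a non-degenerate symplectic $\mathbb{F}_p$-space by a cyclic $p$-group --- are the right ones.

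One step deserves more care and is in fact not correct as written. You propose to refine $[\cJ^{i+1},\cJ^{i+1}]\subseteq G^i_{x,r_i}\cdot\cJ^{i+1}_+$ to $[\cJ^{i+1},\cJ^{i+1}]\subseteq G^i_{x,r_i}$ ``by iterating,'' but this refinement is not available: the Chevalley formula for $[u_\beta,u_{\beta'}]$ with $\beta,\beta'\in\Phi^{i+1}\setminus\Phi^i$ at depth $s_i$ produces factors in $\bU_{\gamma,x,r_i}$ with $\gamma=m\beta+n\beta'$, and that root $\gamma$ may perfectly well lie again in $\Phi^{i+1}\setminus\Phi^i$, since $\beta+\beta'$ need not vanish on $Z(\bG^i)^\circ$. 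What one actually establishes, and what the lemma's statement is shorthand for, is $[\cJ^{i+1},\cJ^{i+1}]\subseteq G^i_{x,r_i}\cdot\cN^{i+1}$: the extra factors lie in $(G^i,G^{i+1})_{x,(r_i+,s_i+)}$, on which $\hat\phi^i$ is trivial by its defining property, and hence they fall into $\cN^{i+1}=\ker\hat\phi^i$. Consequently ``$\phi^i\circ[-,-]$'' in the lemma should really be read as ``$\hat\phi^i\circ[-,-]$'': only the $\Phi^i$-part of the commutator contributes to the pairing. Your later assertion that $[\cJ^{i+1},\cJ^{i+1}_+]\subseteq G^i_{x,r_i+}$ has the same imprecision; the correct conclusion is containment in $\cN^{i+1}$, for the same reason. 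Once this is corrected, your deduction of centrality of $\cZ^{i+1}$ and the Heisenberg structure goes through. You should also note (rather than merely assert) that the Galois descent from $\R_E$ to $\R$ is unproblematic here because the extension $E/F$ is tamely ramified, so the relevant cohomology in degree one vanishes.
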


As elaborated carefully in \cite[3.25--26]{HakimMurnaghan2008}, we may canonically define the Heisenberg--Weil lift $\hat{\eta}$ of $\hat{\phi^i}$ to  the group $\mathrm{Sp}(\cW^{i+1}\ltimes\cH^{i+1}$.  Since $J^i=J^i(\Sigma,G)$ acts on $\cJ^{i+1}$ by conjugation, preserving the symplectic form, we have a map $J^{i}\ltimes \cJ^{i+1} \to \mathrm{Sp}(\cW^{i+1})\ltimes\cH^{i+1}$.  With respect to this map we set, for each $gj \in J^{i}\cJ^{i+1}$, 
$$
\kappa^i(gj) = \phi^i(g)\hat{\eta}(g,j).
$$
This is well-defined, and by \eqref{eq:product} gives an irreducible representation of $J^{i+1}$. 

Set $\kappa^d = \phi^d$.  Inflate each of the remaining representations $\kappa^i$ to a representation of $J(\Sigma,G):=J^d(\Sigma,G)$.  Using \eqref{eq:lift}, we may also inflate $\sigma$ to a representation of $J(\Sigma,G)$.
Set $\kappa_\Sigma=\bigotimes_{i=0}^d\kappa^i$ and define
$$
\lambda_\Sigma=\sigma\otimes\kappa_\Sigma.
$$

\begin{proposition}[{\cite[Theorem 7.5]{KimYu2017}}]\label{prop:kim-yu-types}
The representation $\lambda_\Sigma$ restricts irreducibly to $J(\Sigma,M)=J(\Sigma,G)\cap M$.  The pair $(J(\Sigma,M),\lambda_\Sigma)$ is an $\mathfrak{S}$-type for some finite set of inertial equivalence classes of $M$ supported on $M$.  Moreover, the pair $(J(\Sigma,G),\lambda_\Sigma)$ is a $G$-cover of $(J(\Sigma,M),\lambda_\Sigma)$.  
If $\KimYugroup=M^0_{x}$ then $\mathfrak{S}=\{[M,\pi_{\Sigma,M}]_M\}$ for some irreducible supercuspidal representation $\pi_{\Sigma,M}\hookrightarrow\cInd_{J(\Sigma,M)}^M\ \lambda_\Sigma$, and  $(J(\Sigma,G),\lambda_\Sigma)$ is an $[M,\pi_{\Sigma,M}]_G$-type.
\end{proposition}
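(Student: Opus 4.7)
The statement is essentially a compilation of the main theorem of \cite{Yu2001} together with its generalisation in \cite[Theorem~7.5]{KimYu2017}, so the plan is to explain how to extract each assertion from those sources and, in the cuspidal-datum case, to deduce the final claim from the Bushnell--Kutzko theory of covers.

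First I would verify that the restriction $\lambda_\Sigma|_{J(\Sigma,M)}$ is irreducible and coincides with the Kim--Yu representation $\lambda_{\Sigma_M}$ built from the cuspidal datum $\Sigma_M$ for $M$.  By Remark~\ref{rem:gen-twisted-levi}, $\vec{\bM}$ is a generalised twisted Levi sequence in $M$, and the preceding lemma identifies $J^i(\Sigma,G)\cap M^i$ with $J^i(\Sigma_M,M)$.  Since $\sigma$ and each $\phi^i$ restrict consistently under this inclusion, the remaining point is the compatibility of the Heisenberg--Weil lifts: the symplectic spaces $\cW^{i+1}(\Sigma_M,M)$ embed naturally into $\cW^{i+1}(\Sigma,G)$ via $\bM^{i+1}\subset\bG^{i+1}$, and the canonical choice of $\hat\eta$ described in \cite[3.25--26]{HakimMurnaghan2008} is functorial under such embeddings.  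With this compatibility, both the irreducibility of the restriction and its identification with $\lambda_{\Sigma_M}$ follow.

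Next I would invoke \cite[Theorem~7.5]{KimYu2017} to deduce that $(J(\Sigma,M),\lambda_\Sigma)$ is an $\mathfrak{S}$-type and that $(J(\Sigma,G),\lambda_\Sigma)$ is a $G$-cover of it.  The cover axioms of \cite{BushnellKutzko1998} amount to the intersection equality $J(\Sigma,G)\cap M=J(\Sigma,M)$ already recorded, an Iwahori decomposition of $J(\Sigma,G)$ with respect to any parabolic $P=MN$ with Levi factor $M$ (which is built into the root-theoretic definition of the groups $(G^i,G^{i+1})_{r_i,s_i}$ and the product structure \eqref{eq:product}), and the existence of a strongly $(P,J(\Sigma,G))$-positive element of $Z(\bM)(F)$ with the correct intertwining property.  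The latter ingredient is where the $\vec{s}$-genericity of the embeddings $\{\iota\}$ and the $G^{i+1}$-genericity of the characters $\phi^i$ are used decisively, and verifying it is the technical heart of \cite{KimYu2017}; this step is the main obstacle if one wishes to give a self-contained proof, and I would simply cite Kim--Yu rather than redo it here.

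Finally, suppose $\KimYugroup=M^0_x$.  Then $\Sigma_M$ is a cuspidal datum in the sense of \cite{Yu2001}, and Yu's construction yields an irreducible supercuspidal representation $\pi_{\Sigma,M}$ with $\pi_{\Sigma,M}\hookrightarrow\cInd_{J(\Sigma,M)}^M\lambda_\Sigma$ by Frobenius reciprocity; consequently $\mathfrak{S}=\{[M,\pi_{\Sigma,M}]_M\}$.  That $(J(\Sigma,G),\lambda_\Sigma)$ is then an $[M,\pi_{\Sigma,M}]_G$-type is a formal consequence of the cover property: by \cite{BushnellKutzko1998}, any $G$-cover of a type for a single inertial class $[M,\rho]_M$ whose representative is supercuspidal is automatically an $[M,\rho]_G$-type, which completes the proof.
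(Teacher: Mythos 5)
The paper gives no proof for this proposition at all — it is stated with the citation to Kim--Yu \cite[Theorem~7.5]{KimYu2017} in the bracketed theorem header and taken as known. Your proposal fleshes out the dependencies (the equality $J^i(\Sigma,G)\cap M^i=J^i(\Sigma_M,M)$ from the preceding lemma, compatibility of Heisenberg--Weil lifts, and the formal Bushnell--Kutzko deduction in the cuspidal case) but ultimately, like the paper, delegates the substance to Kim--Yu, so it is correct and essentially the same approach.
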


In the special case that $\Sigma$ is a cuspidal datum and $J_G^0=G^0_x$ is maximal, the above construction gives types $(J,\lambda_\Sigma)$ which have the added property that if $(K,\tau)$ is an extension of $(J,\lambda_\Sigma)$ to a maximal compact-mod-centre subgroup of $G$, then $\cInd_K^G\tau$ is an irreducible supercuspidal representation whose inertial class lies is $[G,\pi]_G$ \cite{Yu2001}.

Upon restriction to the pro-$p$ radical $H_+(\Sigma,G)$ of $H(\Sigma,G)$, the representation $\lambda_\Sigma$ is $\theta_\Sigma$-isotypic,  where 
\begin{equation}\label{eq:def-theta}
\theta_\Sigma = \bigotimes_{i=0}^d\hat{\phi}^i,
\end{equation}
understanding this product relative to the factorization \eqref{eq:product2}. Following \cite{Stevens2005} we say that $(H_+(\Sigma,G),\theta_\Sigma)$ is the \emph{semisimple character} associated to $\Sigma$. In the case that  $\Sigma$ is cuspidal, we say that $\theta_\Sigma$ is a \emph{simple character}.  These characters encode the arithmetic information contained within the type and will be essential to our arguments in Section~\ref{sec:seville}.

We conclude this section with a result relating data that differ only in the choice of $(\KimYugroup,\sigma)$, for some $M_{x,0}\subseteq \KimYugroup \subseteq M_x$.

\begin{lemma}\label{lem:S-type-connected}
The restriction of $\kappa_\Sigma$ to $J_0(\Sigma,G)$ is irreducible, and the irreducible components of $\lambda_\Sigma|_{J_0(\Sigma,G)}$ are of the form $\sigma_0\otimes\kappa_\Sigma$, for some irreducible component $\sigma_0$ of $\sigma|_{M_{x,0}}$. Given some such irreducible component $\lambda_0=\sigma_0\otimes\kappa_\Sigma$, the pair $(J_0(\Sigma,G),\lambda_0)$ is an $\frak{S}$-type, for some finite set $\frak{S}$ of inertial equivalence classes, each of which is supported on $M$.
\end{lemma}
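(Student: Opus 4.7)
The plan is to establish each assertion in turn, leveraging both the Heisenberg--Weil structure of $\kappa_\Sigma$ and the factorization behaviour of $\sigma$. A key preliminary observation is that each filtration group $\cJ^{i+1}(\Sigma,G)$, and hence each Heisenberg subquotient $\cH^{i+1}$, is contained in $J_0(\Sigma,G)$: the defining generators of $\cJ^{i+1}$ all lie in Moy--Prasad subgroups of positive depth, hence in $G_{x,0}$. Moreover, restricting the isomorphism \eqref{eq:lift} to $J_0$ yields the canonical identification $J_0(\Sigma,G)/J_+(\Sigma,G) \cong M^0_{x,0}/M^0_{x,0+}$.

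For the irreducibility of $\kappa_\Sigma|_{J_0(\Sigma,G)}$, I would first show that each factor $\kappa^i|_{J_0 \cap J^{i+1}}$ is already irreducible. The restriction of the Heisenberg--Weil lift $\hat{\eta}$ to $\cH^{i+1}$ is the unique irreducible Heisenberg representation with central character $\hat{\phi}^i|_{\cZ^{i+1}}$ by the Stone--von Neumann theorem, and since $J_0 \cap J^{i+1} \supseteq \cJ^{i+1}$ contains the Heisenberg group, the construction of $\kappa^i$ from \cite[\S 11]{Yu2001} and \cite[\S 3.25--26]{HakimMurnaghan2008} produces an irreducible representation verbatim. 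Combining the $\kappa^i$ across $0 \leq i \leq d$ then proceeds by the same inductive argument as in \cite[\S 11]{Yu2001}, exploiting the disjoint Moy--Prasad levels at which the different $\kappa^i$ encode their information. Once this is established, the decomposition $\sigma|_{M^0_{x,0}} = \bigoplus_j \sigma_{0,j}$ from condition (v) of the datum definition pulls back via $J_0 \twoheadrightarrow M^0_{x,0}/M^0_{x,0+}$ to give $\lambda_\Sigma|_{J_0} = \bigoplus_j \sigma_{0,j} \otimes \kappa_\Sigma|_{J_0}$, and each summand is irreducible since $\sigma_{0,j}$ is trivial on $J_+$ while $\kappa_\Sigma|_{J_+}$ remains irreducible (its restriction to each $\cH^{i+1}$ is still the Heisenberg representation).

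For the final type assertion, I would fix an irreducible component $\lambda_0 = \sigma_0 \otimes \kappa_\Sigma$ and analyze the induction $\Ind_{J_0(\Sigma,G)}^{J(\Sigma,G)} \lambda_0$. The projection formula, combined with the fact that $\kappa_\Sigma$ extends to $J$, gives $\Ind_{J_0}^J \lambda_0 = \bigl(\Ind_{J_0}^J \sigma_0\bigr) \otimes \kappa_\Sigma$, where $\Ind_{J_0}^J \sigma_0$ factors through the finite quotient $\KimYugroup/M^0_{x,0+}$. Its irreducible components are precisely the inflations of those irreducible representations $\sigma^{(\ell)}$ of $\KimYugroup$ whose restriction to $M^0_{x,0}$ contains $\sigma_0$; these form a finite set, and each such $\sigma^{(\ell)}$ restricts to $M^0_{x,0}$ as a sum of cuspidal representations of $M^0_{x,0:0+}$ by Clifford theory applied to the normal subgroup $M^0_{x,0} \triangleleft \KimYugroup$. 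Consequently, each tuple $\Sigma^{(\ell)} = ((\vec{\bG},\bM^0),(x,\{\iota\}),(\KimYugroup,\sigma^{(\ell)}),\vec{r},\vec{\phi})$ is again a datum in the sense of Definition \ref{def:datum}, so Proposition \ref{prop:kim-yu-types} furnishes a finite set $\frak{S}^{(\ell)}$ of inertial classes supported on $M$ for which $(J,\lambda_{\Sigma^{(\ell)}})$ is an $\frak{S}^{(\ell)}$-type. Setting $\frak{S} = \bigcup_\ell \frak{S}^{(\ell)}$, both the typicality and type conditions for $(J_0,\lambda_0)$ then follow by Frobenius reciprocity, together with the observation that $\lambda_{\Sigma^{(\ell)}}|_{J_0}$ contains $\lambda_0$ for each $\ell$.

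The main obstacle will be completing the proof of irreducibility of $\kappa_\Sigma|_{J_0}$ in full detail: one must verify that no nontrivial decomposition occurs when passing from $J$ to its finite-index subgroup $J_0$, and this requires tracking how the various levels of the Heisenberg--Weil construction interact simultaneously across all $i$, in the spirit of Yu's original inductive argument.
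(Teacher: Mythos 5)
Your proposal is a correct and workable route, but it is considerably longer than, and genuinely different from, the paper's argument. The paper's proof is essentially a one-liner: it observes that the tuple
\[\Sigma_0=((\vec{\bG},\bM^0),(x,\{\iota\}),(M_{x,0},\sigma_0),\vec{r},\vec{\phi})\]
is again a datum, that the group $J(\Sigma_0,G)$ associated to it is precisely $J_0(\Sigma,G)$, and that $\lambda_{\Sigma_0}=\sigma_0\otimes\kappa_{\Sigma_0}=\lambda_0$. The uniqueness of the Heisenberg--Weil lift then identifies $\kappa_\Sigma|_{J_0}$ with $\kappa_{\Sigma_0}$, which is irreducible for free (Proposition~\ref{prop:kim-yu-types} already contains a statement about irreducibility), and the type assertion is a direct application of Proposition~\ref{prop:kim-yu-types} to $\Sigma_0$, with no further computation. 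In other words the paper passes \emph{downward}---replacing $(\KimYugroup,\sigma)$ by $(M^0_{x,0},\sigma_0)$---and lets the Kim--Yu machinery do everything.

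By contrast, you pass \emph{upward}: you reprove the irreducibility of $\kappa_\Sigma|_{J_0}$ by hand from the Heisenberg structure, decompose $\lambda_\Sigma|_{J_0}$ via Clifford theory on $M^0_{x,0}\triangleleft\KimYugroup$, and then lift $\lambda_0$ back up to representations $\sigma^{(\ell)}$ of the larger group $\KimYugroup$, assembling $\mathfrak{S}$ as a union of the classes furnished by Proposition~\ref{prop:kim-yu-types} for each $\Sigma^{(\ell)}$. This is not wrong, but it reproduces a substantial fraction of Yu's original inductive argument at a point where the paper's intended reading is simply ``this is a datum, cite the proposition.'' Two steps in your proof are left as sketches and would need real work: (1) the claim that $\kappa_\Sigma|_{J_0}$ (equivalently $\kappa_\Sigma|_{J_+}$) is irreducible by ``disjoint Moy--Prasad levels'' is precisely the hard part of Yu's construction, and invoking it informally amounts to assuming the conclusion; (2) you need to know that \emph{every} irreducible constituent of $\Ind_{J_0}^J\lambda_0$ has the form $\lambda_{\Sigma^{(\ell)}}$ for the typicality direction, which requires a Clifford-theoretic identification of the constituents of this finite-group induction that you state but do not carry out. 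The paper's formulation sidesteps both problems, since the irreducibility and type statements for $\lambda_{\Sigma_0}$ come pre-packaged in Proposition~\ref{prop:kim-yu-types}. If you intended a shortcut, the observation to make is precisely that $M^0_{x,0}$ can legitimately replace $\KimYugroup$ in the datum.
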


\begin{proof}
Given an irreducible component $\sigma_0$ of $\sigma|_{M_{x,0}}$, it follows immediately from the definition that
\[\Sigma_0=((\vec{\bG},\bM^0),(x,\{\iota\}),(M_{x,0},\sigma_0),\vec{r},\vec{\phi})
\]
is again a datum, and the corresponding type is precisely $(J_0(\Sigma,G),\lambda_0)$.  The uniqueness of the Heisenberg--Weil lift implies that $\kappa_\Sigma$ is an extension of $\kappa_{\Sigma_0}$, so restricts irreducibly; the second statement follows.
The remaining statements follow from 
Proposition~\ref{prop:kim-yu-types}.
\end{proof}

\section{Mackey theory}\label{sec:mackey}

Now, and for the remainder of the paper, we fix a cuspidal datum $\Sigma=(\vec{\bG},x,\sigma,\vec{r},\vec{\phi})$ of $G$ (with $J^0=G^0_x$) and freely use the additional terms defined in Definition~\ref{def:datum}.  Since $\Sigma$ is fixed, we omit it (and the pair $(\Sigma,G)$) from the notation.  We also fix a choice of irreducible supercuspidal representation $\pi$ of $G$ such that $(J,\lambda)$ is a $[G,\pi]_G$-type, where  $\lambda=\sigma\otimes\kappa$. 

Our goal is to investigate the following conjecture.

\begin{conjecture}[The unicity of types]\label{conj:unicity}
Suppose that $K$ is a maximal compact subgroup of $G$ and $(K,\tau)$ is a $[G,\pi]_G$-type. Then there exists a $g\in G$ such that $^gJ\subset K$ and $\Hom_K(\tau,\Ind_{^gJ}^K\ \lambda)\neq 0$.
\end{conjecture}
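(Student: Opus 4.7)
The plan is to combine the Mackey decomposition of Section \ref{sec:mackey} with Theorems \ref{thm:intro-1} and \ref{thm:intro-2} to reduce Conjecture \ref{conj:unicity} to a geometric completeness statement about fixed points of $J$ in $\buil(G)$. Every maximal compact subgroup of $G$ equals $K = G_y$ for some $y \in \buil(G)$. Set $\Pi = \cInd_J^G \lambda$; since $\Pi$ is a finite direct sum of copies of $\pi$, any irreducible representation of $K$ occurring in $\pi|_K$ occurs in $\Pi|_K$. Using the Mackey decomposition
\[
\Pi|_K = \bigoplus_{g \in G_y \backslash G / J} {}^g\tau(y,g),
\]
if $(K,\tau)$ is a $[G,\pi]_G$-type then, after possibly replacing $g$ by a $K$-conjugate, $\tau$ must appear as an irreducible constituent of $\tau(y,g)$ viewed as a representation of $G_{g^{-1}y}$.

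First I would dispatch the easy direction: if $J \subseteq G_{g^{-1}y}$, i.e. $g^{-1}y \in \buil(G)^J$, then $J \cap G_{g^{-1}y} = J$, so by the definition of Mackey component $\tau(y,g) = \Ind_J^{G_{g^{-1}y}} \lambda$, and conjugating, ${}^g\tau(y,g) = \Ind_{{}^gJ}^K {}^g\lambda$ with ${}^gJ \subseteq K$. Frobenius reciprocity then yields $\Hom_K(\tau, \Ind_{{}^gJ}^K {}^g\lambda) \neq 0$, which, after absorbing the inner twist using property (i) of types (that $({}^gJ, {}^g\lambda)$ is again a $[G,\pi]_G$-type), is exactly the conclusion of the conjecture.

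The substance of the proof is the converse: showing that if $g^{-1}y \notin \buil(G)^J$ then no irreducible component $\tau$ of $\tau(y,g)$ can be $[G,\pi]_G$-typical. Here I would apply the two main theorems of the paper in sequence. Theorem \ref{thm:intro-1} dispatches every $g^{-1}y$ whose projection onto $\buil(G^0)$ lies in a facet distinct from that of $x$: in this regime $\tau$ is shown to occur in an essentially tame non-cuspidal representation, hence is not typical. Theorem \ref{thm:intro-2} dispatches every $g^{-1}y$ for which the geodesic $[x, g^{-1}y]$ meets a point of $\Omega_{t+} \setminus \Xi_t$ for some $t \geq 0$: in this regime $\tau$ lies in the restriction of a representation outside $\frak{I}(\pi)$. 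Combining these two exclusions reduces the conjecture (in its reformulation as Conjecture \ref{conj:unicity-2}) to the purely geometric assertion that every $z \in \buil(G) \setminus \buil(G)^J$ in the $G$-orbit of $y$ satisfies the hypothesis of at least one of the two theorems.

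The main obstacle, and the reason for stating the result as a conjecture rather than a theorem, lies precisely in this geometric coverage. The difficult region consists of those $z$ whose projection onto $\buil(G^0)$ \emph{does} lie in the facet of $x$ but which are nevertheless not fixed by $J$; Theorem \ref{thm:intro-1} is silent on these, and one must rely exclusively on Theorem \ref{thm:intro-2}. Establishing that the geodesic $[x,z]$ always meets some $\Omega_{t+} \setminus \Xi_t$ in this situation requires a delicate analysis of the filtration neighborhoods of $x$, of the generic embeddings $\{\iota\}$ determining the building-theoretic position of $\buil(G^0)$ in $\buil(G)$, and of the representation theory of the reductive quotients $\mathsf{G}_{x,0:0+}$. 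As the authors indicate in Section \ref{sec:unicity}, this step is essentially equivalent to a collection of open questions about finite groups of Lie type together with the fine geometry of $\buil(G)$; these are the remaining inputs required to turn the proposal into a complete proof of unicity.
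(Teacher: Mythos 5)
This statement is stated in the paper as a \emph{conjecture}, not a theorem; the paper never proves it, and your proposal correctly recognizes this and reproduces the paper's own reduction strategy rather than a complete argument. The steps you outline --- identify $K=G_y$, pass to the Mackey decomposition of $\Pi|_K$, handle the easy direction $g^{-1}y\in\buil(G)^J$ via Frobenius reciprocity, reformulate the hard direction as Conjecture~\ref{conj:unicity-2}, and then invoke Theorems~\ref{thm:intro-1} and~\ref{thm:intro-2} to exclude two large regions while leaving an open geometric coverage question --- are precisely Proposition~\ref{prop:Mackeytypes}, Conjecture~\ref{conj:unicity-2}, Theorems~\ref{thm:non-cusp} and~\ref{thm:seville}, and the discussion in Section~\ref{sec:unicity}. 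You also correctly locate the irreducible remainder in the region $\Delta\setminus\Delta_{\mathrm{B}}$ and in the representation theory of the finite reductive quotient $\sG^0_x$.

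One small inaccuracy worth tightening: you assert that ``$\Pi$ is a finite direct sum of copies of $\pi$.'' This is false as stated --- $\cInd_J^G\lambda$ is a (typically countably infinite) direct sum of \emph{unramified twists} of $\pi$, not of copies of $\pi$, and is not finite when the centre of $G$ is non-compact. What is true, and what the paper actually uses, is that upon restriction to the compact group $G_y$ the unramified twists become trivial, so $\Pi|_{G_y}$ is a countable direct sum of copies of $\pi|_{G_y}$. The downstream argument is unaffected, since only the restriction to $K$ is used, but the claim as written is incorrect. Similarly, the parenthetical ``after possibly replacing $g$ by a $K$-conjugate'' is not quite the mechanism: the point is that $\tau$ appears in ${}^g\tau(y,g)$ as a representation of $G_y$, equivalently ${}^{g^{-1}}\tau$ appears in $\tau(y,g)$ as a representation of $G_{g^{-1}y}$, and typicality is preserved under conjugation --- no modification of $g$ is involved.
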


Fix a point $y\in\buil(G)$ such that $G_y$ is a maximal compact open subgroup of $G$; every maximal compact subgroup of $G$ arises in this way.  Our goal is to determine which, if any, of the irreducible components of $\pi|_{G_y}$ are $[G,\pi]_G$ types.  Since the restriction to $G_y$ of $\Pi$ is a sum of countably many copies of $\pi|_{G_y}$, 
we are free to work with $\Pi|_{G_y}$ instead. 

We begin with the Mackey decomposition \cite{Kutzko1977}
\[\Pi|_{G_y}=\bigoplus_{g:G_y\backslash G/J}\Ind_{^gJ\cap G_y}^{G_y}\ ^g\lambda.
\]
Instead of the component $\Ind_{^gJ\cap G_y}^{G_y}\ ^g\lambda$, we will prefer to consider its $g^{-1}$-conjugate
\[\tau(y,g):=\Ind_{J\cap G_{g^{-1}y}}^{G_{g^{-1}y}} \lambda.
\]
Note that, for a subrepresentation $\tau$ of $\tau(y,g)$, the pair $(G_{g^{-1}y},\tau)$ is a $[G,\pi]_G$-type 
if and only if $(G_y,{}^g\tau)$ is.

In Section~\ref{sec:non-cusp}, we will work instead with slightly larger representations.

Fix an irreducible component $\sigma_0$ of $\sigma|_{G^0_{x,0}}$.  Then
$\Sigma_0 = (\vec{\bG},x,(G_{x,0}^0,\sigma_0),\vec{r},\vec{\phi})$ is again a cuspidal datum.  By Lemma~\ref{lem:S-type-connected} the corresponding pair $(J_0,\lambda_0)$ is an $\frak{S}$-type for $\frak{S}$ a finite set of inertial equivalence classes supported on $G$.
Since $\lambda$ occurs as a subrepresentation of $\Ind_{J_0}^{J} \lambda_0$, the representation $\Pi_0:=\cInd_{J_0}^G\ \lambda_0$ contains $\Pi$ as a direct summand.  In particular, any type occuring in $\Pi|_{G_y}$ also appears in $\Pi_0|_{G_y}$.

More precisely, for each $g\in G_y\backslash G / J$, $\tau(y,g)$ occurs as a subrepresentation of
\begin{equation}\label{eq:Mackey-0}
\bigoplus_{h \in (J\cap G_{g^{-1}y})\backslash J / J_0} 
\Ind_{J_0 \cap G_{g^{-1}y}}^{G_{g^{-1}y}} {}^h\lambda_0,
\end{equation}
where these latter summands can variously be thought of as twisted Mackey components ${}^h\tau_0(y,gh)$ of $\Pi_0|_{G_y}$, or of twisted Mackey components of ${}^h\Pi_0|_{G_y}$.

%
%
%

These decompositions into (twisted) Mackey components have a useful interpretation in terms of the building $\buil(G)$.  Namely, for each $g\in G$, $\tau(y,g)$ is a representation of the maximal compact open subgroup $G_{g^{-1}y}$; we think of it as attached to the point $g^{-1}y\in \buil(G)$ (or, when convenient, to the point $[g^{-1}y] \in \buil^{red}(G)$).  In fact, as $g$ ranges over a set of double coset representatives of $G_y\backslash G/J$, the representations are attached to distinct $J$-orbits of points in the $G$-orbit of $y$.  

Now the further $g^{-1}y$ is from the defining point $x$ of the datum, the smaller is the intersection $J\cap G_{g^{-1}y}$, and thus the greater the likelihood that $\tau(y,g)$ (which depends only on $\lambda|_{J\cap G_{g^{-1}y}}$) is not unique to a representation of $G$ determined by the type $(J,\lambda)$.

In more precise terms, let $[x,g^{-1}y]$ denote the geodesic in $\buil(G)$ from $x$ to $g^{-1}y$.  Then $G_x\cap G_{g^{-1}y} = G_{[x,g^{-1}y]} \subseteq G_z$ for any $z\in [x,g^{-1}y]$.  Since also $J\subseteq G_x$, we conclude that
$$
J\cap G_{g^{-1}y} \subseteq J\cap G_z
$$
for any $z\in [x,g^{-1}y]$.  In Sections~\ref{sec:non-cusp} and \ref{sec:seville}, we apply this reasoning to reduce the questions we are considering to local ones, depending only an a neighbourhood of $x$.  For example, we have the following immediate result.  

\begin{proposition} \label{prop:Mackeytypes}
Suppose that $g^{-1}y \in \buil(G)^{J}$., the set of fixed points in $\buil(G)$ under $J$.   Then for each irreducible component $\tau$ of ${}^g\tau(y,g)$, $(G_{y},\tau)$ is a $[G,\pi]_G$-type.
\end{proposition}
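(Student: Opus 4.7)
The hypothesis $g^{-1}y\in\buil(G)^{J}$ means exactly that $J\subseteq G_{g^{-1}y}$, so the intersection $J\cap G_{g^{-1}y}$ appearing in the definition of $\tau(y,g)$ collapses to $J$ itself. Thus $\tau(y,g)=\Ind_{J}^{G_{g^{-1}y}}\lambda$, and conjugating by $g$ gives
\[
{}^g\tau(y,g)=\Ind_{{}^gJ}^{G_{y}}\, {}^g\lambda,
\]
where now ${}^gJ\subseteq G_y$. In particular, this is an honest induction between compact groups, hence semisimple and finite-dimensional.

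The plan is then to combine the three general properties of types listed just after the definition of $\frak{S}$-type in Section~\ref{sec:types}. First, by property~(i), $({}^gJ,{}^g\lambda)$ is again a $[G,\pi]_G$-type, since $(J,\lambda)$ is. Second, for any irreducible component $\tau\hookrightarrow\Ind_{{}^gJ}^{G_{y}}\,{}^g\lambda$, Frobenius reciprocity gives
\[
\Hom_{{}^gJ}(\tau|_{{}^gJ},\,{}^g\lambda)\;=\;\Hom_{G_y}\!\bigl(\tau,\,\Ind_{{}^gJ}^{G_y}{}^g\lambda\bigr)\neq 0,
\]
and semisimplicity of ${}^gJ$-representations converts this into $\Hom_{{}^gJ}({}^g\lambda,\tau)\neq 0$. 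Since $G_y$ is a compact open subgroup of $G$ containing ${}^gJ$, property~(ii) then implies that $(G_y,\tau)$ is $[G,\pi]_G$-typical.

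Finally, because $\pi$ is supercuspidal so that the inertial class $[G,\pi]_G$ is a single supercuspidal class in $\frak{B}(G)$, property~(iii) upgrades typical to type: $(G_y,\tau)$ is a $[G,\pi]_G$-type, as desired.

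There is no real obstacle here; the whole argument is a bookkeeping exercise translating the fixed-point condition into the containment ${}^gJ\subseteq G_y$ and then invoking the three standard properties of types. The only minor point requiring care is applying Frobenius reciprocity in the correct direction and using semisimplicity of the restriction of $\tau$ to ${}^gJ$ to swap the roles of source and target of the nonzero hom.
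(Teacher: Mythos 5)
Your proof is correct and follows essentially the same route as the paper's. The paper's proof is just a much terser version of the same argument: it observes $J\subseteq G_{g^{-1}y}$, notes $\tau(y,g)=\Ind_J^{G_{g^{-1}y}}\lambda$, and invokes the already-recorded consequence of properties (i)--(iii) that a containment of a $[G,\pi]_G$-type in an irreducible representation of a maximal compact subgroup makes that representation itself a $[G,\pi]_G$-type; your write-up simply unfolds that sequence of appeals to (i), (ii) via Frobenius reciprocity and semisimplicity, and (iii).
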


\begin{proof}
By definition, we have $g^{-1}y\in \buil(G)^{J}$ if and only if $J\subseteq G_{g^{-1}y}$.  In this case, $\tau(y,g)=\Ind_{J}^{G_{g^{-1}y}}\ \lambda$, which is a sum of $[G,\pi]_G$-types.
\end{proof}

Recall that as $J$ is a compact open subgroup of $G$, the image  $\buil(G)^{J}$ in $\buil^{\red}(G)$ is bounded and convex.  Since $J\subseteq G_x$, it follows that $x\in\buil(G)^{J}$. This fixed point set may meet the orbit of $[y]$ in $0$, $1$ or finitely many points; each of these points defines a different, non-conjugate type on $G_y$ occurring in $\pi$.  

In particular, we have demonstrated that the unicity of types, as stated in Conjecture \ref{conj:unicity}, is equivalent to the statement that the finitely many distinct types corresponding to points in $G\cdot y \cap \buil(G)^J$ are the only $[G,\pi]_G$-typical representations of $G_y$.

\begin{conjecture}[The unicity of types]\label{conj:unicity-2}
Suppose that $\tau(y,g)$ contains a $[G,\pi]_G$-type. Then $g^{-1}y\in\buil(G)^{J}$.
\end{conjecture}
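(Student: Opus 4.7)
The plan is to prove the contrapositive: assuming $g^{-1}y \notin \buil(G)^{J}$, show that $\tau(y,g)$ contains no $[G,\pi]_G$-type. Since $\buil(G)^{J}$ is closed, convex and contains $x$ (as $J \subseteq G_x$), the geodesic $[x,g^{-1}y]$ leaves $\buil(G)^{J}$ at some well-defined point $z_0$. Points of $\buil(G) \setminus \buil(G)^J$ should then be partitioned according to which of Theorem~\ref{thm:intro-1} or Theorem~\ref{thm:intro-2} applies, with the two hypotheses together covering every situation.

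First I would consider the projection $\mathrm{proj}: \buil(G) \to \buil(G^0)$. If $\mathrm{proj}(g^{-1}y)$ lies in a facet of $\buil(G^0)$ distinct from the facet containing $x$, then Theorem~\ref{thm:intro-1} applies directly: every irreducible component of $\tau(y,g)$ is contained in an essentially tame non-cuspidal representation of $G$, so is not a $[G,\pi]_G$-type. This case should be the more geometrically tractable one, since the projection map and its effect on facets are well-understood. The example promised in Section~\ref{sec:projection} (for $\mathbf{Sp}_4(F)$) will presumably show that this hypothesis is strictly weaker than $g^{-1}y \notin \buil(G)^J$, leaving a residual case to be handled.

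For the residual case, where $\mathrm{proj}(g^{-1}y)$ lies in the same facet as $x$ but $g^{-1}y \notin \buil(G)^J$, I would attempt to apply Theorem~\ref{thm:intro-2}. The task is to exhibit some $t \geq 0$ such that the geodesic $[x,g^{-1}y]$ meets a point of $\Omega_{t+} \setminus \Xi_t$. Intuitively, the neighbourhoods $\Omega_t$ are the $x$-centred regions where successive filtration subgroups $H_t$ act trivially, while the $\Xi_t$ are the $Z^0$-equivariant thickenings of $\buil(G^0)$; the condition $g^{-1}y \notin \buil(G)^J$ is what forces the geodesic to move transverse to $\buil(G^0)$, and one expects the Moy--Prasad filtration indices to detect exactly the threshold at which this transverse motion first escapes some $\Xi_t$ while remaining within $\Omega_{t+}$. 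The argument will need to control this threshold simultaneously with the geometry of the embedded sub-building of $G^0$.

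The main obstacle is precisely this matching between the two theorems. The boundary case---geodesics whose projection stays in the facet of $x$ but which exit $\buil(G)^J$ only due to filtration-level transverse motion---requires detailed understanding of how the groups $J \cap G_z$ vary along the geodesic and how this is reflected representation-theoretically in the decomposition of $\lambda|_{J \cap G_{g^{-1}y}}$. As the authors signal in the introduction, this reduction does not close by purely building-theoretic means: completing the argument depends on open questions about $\buil(G)$ and about which representations of the reductive quotients $G_{z,0:0+}$ can arise as components of Mackey restrictions of the fixed Heisenberg--Weil factor $\kappa$. Accordingly, the proposed plan should be regarded as a \emph{reduction}: granted affirmative answers to the open questions stated in Section~\ref{sec:unicity}, the dichotomy between Theorem~\ref{thm:intro-1} and Theorem~\ref{thm:intro-2} exhausts the complement of $\buil(G)^J$ and yields Conjecture~\ref{conj:unicity-2}.
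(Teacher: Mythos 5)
The statement you are asked to ``prove'' is a \emph{conjecture}, and the paper does not claim to prove it.  What the paper establishes around Conjecture~\ref{conj:unicity-2} is only the logical equivalence with Conjecture~\ref{conj:unicity}: Proposition~\ref{prop:Mackeytypes} shows that $g^{-1}y\in\buil(G)^J$ \emph{implies} that $\tau(y,g)$ consists of $[G,\pi]_G$-types, and combining this with the Mackey decomposition of $\Pi|_{G_y}$ and standard facts about types gives the equivalence.  A proof of Conjecture~\ref{conj:unicity-2} itself (i.e.\ of the converse implication) is left open.  So the appropriate target here is that equivalence, not a purported derivation of the contrapositive from Theorems~\ref{thm:intro-1} and~\ref{thm:intro-2}.

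Regarding your strategy for that contrapositive: you correctly identify it as a reduction, and the overall picture you describe — first apply the projection criterion of Theorem~\ref{thm:intro-1}, then attack the residual set $\Delta=\proj_{\buil(G^0)}^{-1}\tilde{[x]}$ with Theorem~\ref{thm:intro-2} — matches the paper's Section~\ref{sec:unicity}.  However, you assert that the two theorems ``together cover every situation,'' and this is where your account diverges from (and is weaker than) what the paper actually claims.  The paper explicitly isolates a \emph{third} regime that neither theorem is expected to reach: points $g^{-1}y\in\Delta\setminus\Delta_{\mathrm{B}}$, where $\Delta_{\mathrm{B}}$ is the subset on which Conjecture~\ref{conj:seville} asserts Theorem~\ref{thm:seville} applies.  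For this remaining set the paper introduces a ``sufficiently small'' criterion for $\sH$ (modelled on Paskunas' argument for $\mathbf{GL}_n$), proves a proposition showing that sufficient smallness rules out typical components, and then poses it as a new open question whether $\sH$ is always sufficiently small there.  So the reduction is not a clean dichotomy between your two theorems; it bottoms out in \emph{two} distinct open problems (Conjecture~\ref{conj:seville} and the sufficiently-small question), only one of which is building-theoretic.  If you intend to prove the conjecture, any plan that claims the two theorems exhaust the complement of $\buil(G)^J$ has a genuine gap precisely on $\Delta\setminus\Delta_{\mathrm{B}}$.
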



\section{Mackey components that intertwine with non-cuspidal representations}\label{sec:non-cusp}

We continue to work with the cuspidal datum $\Sigma= (\vec{\bG},x,\sigma,\vec{r},\vec{\phi})$ fixed in Section \ref{sec:mackey}, but focus on the $\mathfrak{S}$-type $(J_0,\lambda_0)=(J_0(\Sigma),\sigma_0\otimes \kappa_\Sigma)$.

Fix a Mackey component $\tau_0(y,g)$.  Write $\sH$ for the image of the map $J_0\cap G_{g^{-1}y}\rightarrow J_0/J_+=G_{x,0:0+}^0$, and write $\sG_x^0=G_{x,0:0+}^0$. In this section, we prove the following result.

\begin{theorem}\label{thm:non-cusp}
Suppose that there exists a proper parabolic subgroup $\sP$ of $\sG_x^0$ such that $\sH\subset\sP$. Then, for any irreducible component $\tau$ of $\tau_0(y,g)$, there exists a non-cuspidal irreducible representation $\pi'$ of $G$ such that $\Hom_{G_{g^{-1}y}}(\tau,\pi')\neq 0$. In particular, no irreducible component of $\tau_0(y,g)$ is a $\mathfrak{s}$-type for any $\frak{s}\in\mathfrak{S}$.
\end{theorem}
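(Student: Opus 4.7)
The plan is to construct, for each irreducible component $\tau$ of $\tau_0(y,g)$, a non-cuspidal Kim--Yu datum $\Sigma'$ whose associated essentially tame representations contain $\tau$ upon restriction to $G_{g^{-1}y}$. The starting point is a geometric translation of the hypothesis: the proper parabolic $\sP$ of $\sG_x^0$ corresponds via Bruhat--Tits theory to a facet $F$ of $\buil(G^0)$ containing $x$ in its closure with $F\neq\{x\}$, and hence to a proper $F$-Levi subgroup $\bM^0\subsetneq\bG^0$ for which $x$ is a vertex of $\buil(M^0)$ and $M^0_{x,0:0+}$ is canonically identified with the Levi $\sM$ of $\sP$. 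This identifies the ``direction'' in which to shift from $x$ to the adjacent facet $F$, as foreshadowed in the introduction.

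With $\bM^0$ fixed, and for a to-be-chosen cuspidal representation $\bar\sigma'$ of $\sM$ with inflation $\sigma'$ to $M^0_{x,0}$, the next step is to assemble the non-cuspidal Kim--Yu datum
\[
\Sigma'=((\vec\bG,\bM^0),(x,\{\iota\}),(M^0_{x,0},\sigma'),\vec r,\vec\phi),
\]
with embeddings $\{\iota\}$ chosen as in Definition~\ref{def:datum}. By Proposition~\ref{prop:kim-yu-types}, the resulting pair $(J(\Sigma'),\lambda_{\Sigma'})$ is a $G$-cover of an $M$-type for the proper Levi $M=M^d\subsetneq G$, and Lemma~\ref{lem:induced-from-type-nsc} then guarantees that every irreducible constituent $\pi'$ of $\cInd_{J(\Sigma')}^G\lambda_{\Sigma'}$ is a subquotient of a parabolic induction from $M$, hence non-cuspidal.

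It then remains to exhibit $\bar\sigma'$ such that $\tau$ appears in $\pi'|_{G_{g^{-1}y}}$ for some such $\pi'$. By Frobenius reciprocity, this amounts to showing that the irreducible component of $\lambda_0|_{J_0\cap G_{g^{-1}y}}$ producing $\tau$ appears in $\lambda_{\Sigma'}$ restricted to the same subgroup. The key structural observation is that both $\lambda_0$ and $\lambda_{\Sigma'}$ decompose as tensor products of a $\sigma$-factor and a Heisenberg--Weil factor $\kappa$; since the groups $\cJ^{i+1}(\Sigma,G)=\cJ^{i+1}(\Sigma',G)$ depend only on $\vec\bG$ and not on the choice of $\bM^0$, and the characters $\vec\phi$ are common to the two data, the canonical Heisenberg--Weil lifts agree on the common part of their domains of definition. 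Consequently the comparison reduces to matching the two $\sigma$-factors, namely the inflations of $\sigma_0$ and of $\bar\sigma'$, each restricted to the preimage of $\sH$.

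I expect this matching to be the main obstacle. The cuspidality of $\sigma_0$ ensures that $\sigma_0|_\sU$ has no trivial constituents, which obstructs a direct factoring of $\sigma_0$ through $\sM$; however, one needs only to match on the proper subgroup $\sH\subseteq\sP$, where $\sH\cap\sU$ is a proper subgroup of $\sU$. The restriction $\sigma_0|_{\sH\cap\sU}$ then typically admits trivial subrepresentations, whose contributions can be combined with judiciously chosen cuspidal $\bar\sigma'$ of $\sM$. Different components $\tau$ may demand different choices of $\bar\sigma'$; verifying that such a choice is always available, alongside the compatibility of the Heisenberg--Weil constructions under the passage from $\Sigma$ to $\Sigma'$, is precisely the generalization of the depth-zero argument of \cite{Latham2017} (where $d=0$ and no Heisenberg--Weil factor is present) to arbitrary essentially tame representations.
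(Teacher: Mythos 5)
Your high-level strategy---constructing a non-cuspidal Kim--Yu datum attached to the facet $\facet$ corresponding to $\sP$ and a proper Levi of $\bG^0$, so that Lemma~\ref{lem:induced-from-type-nsc} and Proposition~\ref{prop:kim-yu-types} produce a non-cuspidal $\pi'$ containing $\tau$---is the right one, and your geometric translation of the hypothesis is sound. But there is a genuine gap in the middle. Your claim that $\cJ^{i+1}(\Sigma,G)=\cJ^{i+1}(\Sigma',G)$, hence that the two Heisenberg--Weil factors ``agree on the common part of their domains of definition,'' is false, and elides the technical heart of the argument. The datum $\Sigma'=((\vec\bG,\bM^0),(x,\{\iota\}),(M^0_{x,0},\sigma'),\vec r,\vec\phi)$ has $x$ a vertex of $\buil(M^0)$, but the $\vec s$-generic embedding $\{\iota\}$ of Definition~\ref{def:datum}(\ref{item:gen-emb}) forces the image of $x$ in $\buil(G^0)$ \emph{not} to coincide with the original vertex; it must lie generically within the facet adjacent to $x$, so that $G^0_{\,\iota(x),0:0+}\cong\sM$ (indeed, the equality $J_G^0/G^0_{x,0+}=J_M^0/M^0_{x,0+}$ of \S\ref{sec:kimyu} fails at a vertex). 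The groups $\cJ^{i+1}=(G^i,G^{i+1})_{\,\cdot\,,r_i,s_i}$ are built from Moy--Prasad filtrations at that image point, which differ from those at the vertex $x$ used in $\Sigma$; hence $\cJ^{i+1}(\Sigma')\neq\cJ^{i+1}(\Sigma)$. Showing that $\kappa_\Sigma$ restricted to $J(\Sigma')\subsetneq J_0$ is $\kappa_{\Sigma'}$-isotypic is the content of the paper's Proposition~\ref{prop:isotypic-restriction}: one must identify the centres $\cZ$ at the two points, exhibit the new symplectic space $\cW_u$ as a non-degenerate subspace of $\cW_x$, invoke Stone--von Neumann (Lemma~\ref{lem:stone-von-neumann}), and then use uniqueness of the Weil lift. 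None of this is automatic.

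A second gap is the matching of the $\sigma$-factors. An irreducible constituent $\xi$ of $\sigma_0|_\sM$ need not be cuspidal as a representation of $\sM$, so there may be \emph{no} cuspidal $\bar\sigma'$ of $\sM$ producing the needed intertwining; and $\sH\cap\sU$ need not be proper in $\sU$ (e.g. $\sH=\sP$ is allowed), so your ``trivial constituent'' heuristic is not available in general. The paper resolves this by descending further: it fixes the cuspidal support $(\sL_\xi,\zeta_\xi)$ of each $\xi$, constructs the datum at a point $u_\xi$ with $G^0_{u_\xi,0:0+}\cong\sL_\xi$ (Lemma~\ref{lem:define-point-u-xi}), and uses Lemma~\ref{lem:image-parabolic-of-M} together with Frobenius reciprocity for $\Ind_{\sQ_\xi}^\sM\zeta_\xi$ to transfer the intertwining from $\xi$ to $\zeta_\xi$. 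Replacing $\sM$ by $\sL_\xi$ is what makes the matching step work, and is where the cuspidality condition (v) of Definition~\ref{def:datum} is actually satisfied.
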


The proof of this result will occupy the remainder of this section, taking the form of a series of lemmata. Fix a proper parabolic subgroup $\sP$ of $\sG_x^0$ containing $\sH$, identified with a facet $\facet\subset \buil(G^0)$ adjacent to $x$.  Choose a point $z\in \facet$ for which $\sP$ identifies with $G_{z,0}^0/G_{x,0+}$.

Choose an apartment $\apart^0=\apart(\bG^0,\bS^0,F)\subset\buil(G^0)$ which contains both $x$ and $z$. Here, $\bS^0$ is a maximal split torus in $\bG^0$; let $\Phi^0=\Phi(\bG^0,\bS^0,F)$ be the corresponding rational root system. Then the point
\[z':=x+(x-z)\in\apart^0
\]
satisfies $\alpha(z'-x)=-\alpha(z-x)$ for all $\alpha\in\Phi^0$, which implies that $G_{z',0}^0/G_{x,0+}^0=\bar{\sP}$, the parabolic subgroup of $\sG_x^0$ opposite to $\sP$.

 By \cite[\S 6.3, Proposition 6.4]{MoyPrasad1996}, we may associate to $z$ an $F$-Levi subgroup $\bM^0$ of $\bG^0$; note that $\bM^0$ is also by this process associated to $z'$.  Moreover, the images in $\sG_x^0$ of $\bM^0(F)\cap G_{z,0}^0$ and $\bM^0(F)\cap G_{z',0}^0$ coincide with the Levi factor $\sM$ of $\sP$ given by $G_{z,0:0+}^0=G_{z',0:0+}^0$. Write $\sP=\sM\sN$ for the resulting Levi decomposition of $\sP$ in $\sG_x^0$. 
 
 Decompose $\sigma_0|_{\sM}$ into irreducible components as
\begin{equation}\label{eqn:decompose-sigma-on-M}
\sigma_0|_{\sM}=\bigoplus_{\xi\in\Xi}\xi
\end{equation}
where $\Xi$ is some index set.  For each $\xi\in\Xi$, fix a representative $(\sL_\xi,\zeta_\xi)$ of its cuspidal support; thus $\xi$ occurs as a summand of $\Ind_{\sL_\xi\sU_\xi}^\sM\ \zeta_\xi$ for any parabolic $\sQ_\xi=\sL_\xi\sU_\xi$ of $\sM$ with Levi factor $\sL_\xi$.

\begin{lemma}\label{lem:define-point-u-xi}
There exists a point $u_\xi\in\apart^0$ for which $G_{u_\xi,0:0+}^0=\sL_\xi$, giving rise to a twisted Levi sequence $\vec{\bL}_\xi\subset\vec{\bG}$, together with an $\vec{s}$-generic embedding $\iota$ of the corresponding buildings (relative to $u_\xi$) into $\buil(G)$, such that
\begin{equation}\label{eqn:J-xi-inside-J}
J_\xi:=G_{u_\xi,0}^0G_{u_\xi,s_0}^1\cdots G_{u_\xi,s_{d-1}}\subset J_0,
\end{equation}
where we have suppressed the notation $\iota$ on the left. 
\end{lemma}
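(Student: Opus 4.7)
My plan is to apply the Moy--Prasad correspondence between Levi subgroups of $\sG_x^0$ and facets of $\buil(G^0)$ whose closure contains $x$, then to construct the associated twisted Levi sequence $\vec{\bL}_\xi$ in analogy with Remark~\ref{rem:gen-twisted-levi}, and finally to verify the filtration inclusion $J_\xi\subseteq J_0$ by carefully decomposing via root subgroups and exploiting genericity.

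First, by Moy--Prasad theory I would choose $u_\xi\in\apart^0$ in a facet $\facet_\xi$ whose closure contains $x$, such that $G^0_{u_\xi,0:0+}$ coincides with $\sL_\xi$ inside $\sG_x^0$ (possibly after replacing $\sL_\xi$ by a conjugate, which does not alter the cuspidal support decomposition of $\sigma_0|_{\sM}$). I would further take $u_\xi$ as close to $x$ as needed. Let $\bL_\xi^0\subseteq\bG^0$ be the $F$-Levi whose reductive quotient at $u_\xi$ is $\sL_\xi$, namely the $\bG^0$-centralizer of the connected subtorus of $\bS^0$ determined by the direction $u_\xi-x$. For $i\geq 1$, define $\bL_\xi^i$ as the $\bG^i$-centralizer of $Z_\mathrm{s}(\bL_\xi^0)^\circ$, producing $\vec{\bL}_\xi$ exactly as in Remark~\ref{rem:gen-twisted-levi}. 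Since $\vec{s}$-genericity is an open condition on the values at the base point of the relevant affine roots of $\bG^i$, the embedding $\{\iota\}$ from $\Sigma$ remains $\vec{s}$-generic at $u_\xi$ whenever $u_\xi$ is close enough to $x$; this yields the required generic embedding of $\buil(\vec{\bL}_\xi)$.

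The main computation is $J_\xi\subseteq J_0 = G^0_{x,0}G^1_{x,s_0}\cdots G^d_{x,s_{d-1}}$. I would proceed by induction on $i$ with the hypothesis that $G^j_{u_\xi,s_{j-1}}\subseteq J_0$ for all $j\leq i$ (using the convention $s_{-1}:=0$). The base case $G^0_{u_\xi,0}\subseteq G^0_{x,0}$ is immediate, since $x$ lies in the closure of the facet of $u_\xi$. For the inductive step, the subgroup $G^i_{u_\xi,s_{i-1}}$ is generated by a maximal bounded torus subgroup at depth $s_{i-1}$ together with affine root subgroups $U_\psi$ of gradient $\alpha\in\Phi^i$ with $\psi(u_\xi)\geq s_{i-1}$. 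For $\alpha\in\Phi^i\setminus\Phi^{i-1}$, the $\vec{s}$-genericity of $\iota$ forces $\psi(x)\neq s_{i-1}$, so for $u_\xi$ close enough to $x$ the condition $\psi(u_\xi)\geq s_{i-1}$ is equivalent to $\psi(x)\geq s_{i-1}$, and the contribution from such roots lies in $G^i_{x,s_{i-1}}\subseteq J_0$. For $\alpha\in\Phi^{i-1}$, the contribution lies in $G^{i-1}_{u_\xi,s_{i-1}}\subseteq G^{i-1}_{u_\xi,s_{i-2}}$, which is in $J_0$ by the inductive hypothesis; the torus contribution is absorbed similarly.

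The hard part will be the argument of the previous paragraph, since Moy--Prasad filtration subgroups at positive depth do not behave monotonically under specialization within the star of $x$, and in particular no single filtration containment of the form $G^i_{u_\xi,s_{i-1}}\subseteq G^i_{x,s_{i-1}}$ need hold. The proof hinges on the genericity of $\iota$, which matches ``transverse'' root subgroups (gradients in $\Phi^i\setminus\Phi^{i-1}$) between $u_\xi$ and $x$ at depth $s_{i-1}$, together with the nested product structure of $J_0$, which allows ``longitudinal'' contributions (gradients in $\Phi^{i-1}$) to be absorbed into lower-level factors via the induction.
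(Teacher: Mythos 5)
Your proposal has the right broad shape — choose $u_\xi$ in a facet adjacent to $x$ via Moy--Prasad, construct $\vec{\bL}_\xi$ via the Kim--Yu centralizer recipe, and prove the inclusion by controlling root-subgroup contributions — and this matches the paper's approach in outline. But there are two real gaps in the argument.

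First, your decomposition of the root contributions is incorrect. You split $\Phi^i$ into $\Phi^{i-1}\sqcup(\Phi^i\setminus\Phi^{i-1})$ and invoke $\vec{s}$-genericity on the second part. But the genericity condition in the Kim--Yu formalism concerns roots $\alpha\in\Phi(\bG^i,\bS^i,F)\setminus\Phi(\bL_\xi^i,\bS^i,F)$, a set that is not equal to $\Phi^i\setminus\Phi^{i-1}$. Since $\bL_\xi^i\cap\bG^{i-1}=\bL_\xi^{i-1}$ but $\bG^{i-1}\not\subseteq\bL_\xi^i$ and $\bL_\xi^i\not\subseteq\bG^{i-1}$, the set $\Phi(\bL_\xi^i)\setminus\Phi^{i-1}$ is in general nonempty, and for those roots genericity says nothing at all; applying it there is a non sequitur. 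The paper instead uses the fact that for $\alpha\in\Phi(\bL_\xi^i)$ one has $\alpha(u_\xi-x)=0$ (since $u_\xi-x$ lies in the cocharacter space of $Z_s(\bL_\xi^0)^\circ$), which forces $U_{\alpha,u_\xi,s_{i-1}}^i = U_{\alpha,x,s_{i-1}}^i$ outright. Your ``longitudinal'' intuition is close, but you attach it to the wrong set $\Phi^{i-1}$ and never record the equality $\alpha(u_\xi-x)=0$ for the $\bL_\xi^i$-roots, so the induction has a hole exactly at $\alpha\in\Phi(\bL_\xi^i)\setminus\Phi^{i-1}$.

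Second, the treatment of the embedding is too loose. The cuspidal datum $\Sigma$ does not furnish a diagram of embeddings $\buil(L_\xi^i)\hookrightarrow\buil(G^i)$ at all (it only needs $M^0=G^0$ and an essentially canonical $\buil(G^0)\hookrightarrow\buil(G)$), so the statement that ``the embedding $\iota$ from $\Sigma$ remains $\vec{s}$-generic at $u_\xi$'' has no referent and cannot be the argument. One must construct a new family $\{\iota\}$ and choose it so that, for each $i$, the image $\iota(u_\xi)$ not only avoids the hyperplanes $H^i_{\psi,s_{i-1}}$ (genericity) but also lies in a connected component of their complement having $x$ in its closure — this is the condition that guarantees the geodesic $[x,\iota(u_\xi)]$ does not cross any such hyperplane, which is what delivers $U_{\alpha,\iota(u_\xi),s_{i-1}}^i\subseteq U_{\alpha,x,s_{i-1}}^i$ for the transverse roots. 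Proximity of $u_\xi$ to $x$ in $\apart^0$ is neither the right parameter (the embedding into $\buil(G^i)$ is what must be chosen) nor by itself sufficient (a point arbitrarily close to $x$ can still sit in a chamber not containing $x$ in its closure when $x$ lies on several walls). The existence of such a choice is exactly what \cite[\S3.6]{KimYu2017} provides, using that the relevant roots are non-constant on $\facet_\xi$; this step cannot be dispensed with.
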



\begin{proof}
Since $\sL_\xi$ is a Levi subgroup of $\sG_x^0$ contained in $\sM$, there exists a facet $\facet_\xi\subset\apart^0$, whose closure contains both $x$ and $z'$, such that for any $u_\xi\in\facet_\xi$ we have $G_{u_\xi,0:0+}^0=\sL_\xi$. As above, associated to $u_\xi$ is a (proper) Levi subgroup $L_\xi^0=\bL_\xi^0(F)$ of $G^0$. It has the property that with respect to any embedding of $\buil(L_\xi^0)$ into $\buil(G^0)$, the intersection $L_\xi^0\cap G_{u_\xi,0}^0=(L_\xi^0)_{u_\xi,0}$ is a maximal parahoric subgroup of $L_\xi^0$ such that $(L_\xi^0)_{u_\xi,0:0+}=\sL_\xi$.

Now, using J.-L.~Kim and J.K.~Yu's construction as summarized in Remark~\ref{rem:gen-twisted-levi}, construct from $\bL_\xi^0$ the generalized twisted Levi sequence
$(\bL_\xi^0, \bL_\xi^1,\ldots, \bL_\xi^d=\bL_\xi)$.
Note that $Z(\bL_\xi^0)/Z(\bL_\xi)$ is anisotropic over $F$ since  $Z(\bG^0)/Z(\bG)$ is.  Set $L_\xi^i=\bL_\xi^i(F)$.

We now choose a family of embeddings $\{\iota\}$ of the buildings $\buil(L_\xi^i)$ into $\buil(G^i)$ that is $\vec{s}$-generic and for which \eqref{eqn:J-xi-inside-J} holds as follows.  

Since $Z(\bL_\xi^0)/Z(\bL_\xi)$ is anisotropic, $\buil(L_\xi^0)$ embeds uniquely in $\buil(L_\xi^i)$ for each $i$; this defines points $u_\xi$ in each of these buildings.  An embedding $\iota$ of $\buil(L_\xi^i)$ into $\buil(G^i)$ is $(u_\xi,s_{i-1})$-generic if for each maximal split torus $\bS^i$ of $\bL_\xi^i$ such that $u_\xi\in\apart(\bL_\xi^i,\bS^i)$, then  for each root $\alpha\in\Phi(\bG^i,\bS^i,F)\backslash\Phi(\bL_\xi^i,\bS^i,F)$ we have that $U_{\alpha,\iota(u_\xi),s_{i-1}}^i=U_{\alpha,\iota(u_\xi),s_{i-1}+}^i$; in particular, genericity is the condition that $\iota(u_\xi)$ does not lie on any of the hyperplanes $H^i_{\psi,s_{i-1}}=\{y\in\apart(\bG^i,\bS^i,F) \mid \psi(y)=s_{i-1}\}$, for any affine root $\psi$ with gradient such an $\alpha$.

As $i$ ranges from $0$ to $d$, these hyperplanes $H^i_{\psi,s_{i-1}}$ subdivide each $\buil(G^i) \subset \buil(G)$, leaving a collection $O^i$ of open connected components.  We require a choice of embedding $\iota \colon \buil(L_\xi^0)\to\buil(G^0)$ so that for each $i$, $\iota(u_\xi)$ lies in some $O^i$ that contains $x$ in its closure. This is possible, via the argument of \cite[\S3.6]{KimYu2017}, since every  $\alpha\in\Phi(\bG^i,\bS^i,F)\backslash\Phi(\bL_\xi^i,\bS^i,F)$ is non-constant on $\facet_\xi \subset \buil(G^0)\subset \buil(G^i)$.

By construction, the geodesic $(x,\iota(u_\xi)]$ in $\buil(G)$ does not cross any hyperplane $H_{\psi,s_{i-1}}$ in $\buil(G^i)$ for any $i$, and so $U_{\alpha,\iota(u_\xi),s_{i-1}}^i\subset U_{\alpha,x,s_{i-1}}^i$ for all $\alpha\in\Phi(\bG^i,\bS^i,F)\backslash\Phi(\bL^i_\xi,\bS^i,F)$; moreover, we have $U_{\alpha,u_\xi,s_{i-1}}^i=U_{\alpha,x,s_{i-1}}^i$ for any $\alpha\in\Phi(\bL_\xi^i,\bS^i,F)$. This suffices to yield the inclusion $G_{\iota(u_\xi),s_{i-1}}^i\subset G_{x,s_{i-1}}^i$. 
Together, these inclusions imply \eqref{eqn:J-xi-inside-J}.
\end{proof}


With this choice of $\iota$ (which we fix once and for all, and omit from our notation), and letting $\zeta_\xi$ denote the inflation to $L_{\xi,0}^0$ of the cuspidal representation by the same name, it is easy to verify the following.

\begin{lemma}
The tuple
\[\Sigma_\xi:=((\vec{\bG},\bL_\xi^0),(u_\xi,\{\iota\}),(L^0_{\xi,0},\zeta_\xi),\vec{r},\vec{\phi})
\]
is a datum.
\end{lemma}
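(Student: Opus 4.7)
The plan is to verify in turn each of the eight conditions of Definition~\ref{def:datum} for the tuple $\Sigma_\xi = ((\vec{\bG},\bL_\xi^0),(u_\xi,\{\iota\}),(L^0_{\xi,0},\zeta_\xi),\vec{r},\vec{\phi})$. Most conditions will be inherited from the fixed datum $\Sigma$, while the remaining ones have been arranged by the construction of $u_\xi$, $\bL_\xi^0$, and $\iota$ in Lemma~\ref{lem:define-point-u-xi}.

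First I would dispense with the conditions that are inherited from $\Sigma$. The twisted Levi sequence $\vec{\bG}$ and its splitting over a tame extension $E/F$ are unchanged (condition (i)); the depth sequence $\vec{r}$ trivially carries over (condition (vi)); and each character $\phi^i$ remains $G^{i+1}$-generic of depth $r_i$ since this is a property of $\phi^i$ on $G^i$ that does not reference $\bM^0$ (condition (vii)). Condition (viii), the $\vec{s}$-genericity of the diagram $\{\iota\}$ of embeddings $\buil(L_\xi^i)\hookrightarrow\buil(G^i)$ relative to $u_\xi$, is precisely the property arranged in Lemma~\ref{lem:define-point-u-xi}.

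Next I would address conditions (ii)--(iv). By \cite[\S6.3]{MoyPrasad1996} applied to $u_\xi\in\facet_\xi\subset\apart^0\subset\buil(G^0)$, the group $\bL_\xi^0$ is indeed an $F$-Levi subgroup of $\bG^0$, so condition (ii) holds and the associated generalized twisted Levi sequence $\vec{\bL}_\xi$ in $\bL_\xi=\bL_\xi^d$ is obtained exactly as in Remark~\ref{rem:gen-twisted-levi}. Condition (iii) asks that $L^0_{\xi,0}=(L_\xi^0)_{u_\xi,0}$ be a maximal parahoric subgroup of $L_\xi^0$, which was verified in the proof of Lemma~\ref{lem:define-point-u-xi}. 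Condition (iv) is vacuous here, since the chosen $\KimYugroup=L^0_{\xi,0}$ is itself the parahoric and hence contains it as a normal subgroup.

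Finally, the only condition requiring a brief argument is (v): that $\zeta_\xi$ is an irreducible representation of $L^0_{\xi,0}$ whose restriction to the parahoric is a sum of representations that descend to cuspidal representations of the reductive quotient $L^0_{\xi,0:0+}=\sL_\xi$. But $\zeta_\xi$ was chosen at the outset as an irreducible cuspidal representation of $\sL_\xi$ (part of the cuspidal support $(\sL_\xi,\zeta_\xi)$ of $\xi$ in the decomposition \eqref{eqn:decompose-sigma-on-M}), and we have inflated it to $L^0_{\xi,0}$; condition (v) is then immediate. Assembling these observations, $\Sigma_\xi$ satisfies every condition of Definition~\ref{def:datum} and is therefore a datum. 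There is no real obstacle: the lemma is a bookkeeping statement whose content was effectively established by the construction of $u_\xi$ and $\iota$ in Lemma~\ref{lem:define-point-u-xi}, and the observation that $\bL_\xi^0$, being an $F$-Levi of $\bG^0$, slots into the Kim--Yu framework summarized in Remark~\ref{rem:gen-twisted-levi}.
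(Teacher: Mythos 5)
Your proof is correct, and it fills in exactly the verification that the paper leaves implicit — the authors preface the lemma with ``it is easy to verify the following'' and give no proof at all. Your check of each of the eight conditions in Definition~\ref{def:datum} is accurate: (i), (vi), (vii) are inherited unchanged from $\Sigma$; (ii) and (iii) follow from the Moy--Prasad construction of $\bL_\xi^0$ and the maximality of $(L_\xi^0)_{u_\xi,0}$, both established in the proof of Lemma~\ref{lem:define-point-u-xi}; (iv) is trivial because $\KimYugroup = L^0_{\xi,0}$ is the parahoric itself; (v) holds because $\zeta_\xi$ is by construction the inflation of an irreducible cuspidal representation of $\sL_\xi = L^0_{\xi,0:0+}$; and (viii) is precisely what the choice of $\{\iota\}$ in Lemma~\ref{lem:define-point-u-xi} arranges. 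Nothing is missing, and your framing of the lemma as bookkeeping on the output of Lemma~\ref{lem:define-point-u-xi} matches the authors' intent.
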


In particular, via $\Sigma_\xi$ we now associate to each irreducible component $\xi$ of $\sigma_0|_{\sM}$ a pair $(J_\xi,\lambda_\xi)$, where $J_\xi=J_0(\Sigma_\xi,G)$ as in \eqref{eqn:J-xi-inside-J} and $\lambda_\xi=\zeta_\xi\otimes\kappa_\xi$.  Note that $(J_\xi,\lambda_\xi)$ is an $\frak{S}$-type for some finite set $\frak{S}$ of inertial equivalence classes, all of which are supported on $L_\xi$; thus any irreducible subquotient of $\cInd_{J_\xi}^G\ \lambda_\xi$ is non-cuspidal.   Thus Theorem~\ref{thm:non-cusp} will follow if we show that, for any component $\tau$ of $\tau_0(y,g)$, there exists a $\xi\in\Xi$ such that $\tau$ is contained in an irreducible quotient of $\cInd_{J_\xi}^G \lambda_\xi$. 

A first step, which will be needed in the proof, is to understand the image of $J_\xi\cap G_{g^{-1}y}$ in $\sG_{x}^0$.  

\begin{lemma}\label{lem:image-parabolic-of-M}
For each $\xi\in\Xi$, the image $\sH_\xi$ of $J_\xi\cap G_{g^{-1}y}$ in $\sG_x^0$ is contained in a parabolic subgroup $\sQ_\xi$ of $\sM$ with Levi factor $\sL_\xi$.
\end{lemma}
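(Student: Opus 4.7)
The strategy is to first describe the image $\tilde{\sQ}_\xi$ of $J_\xi$ inside the reductive quotient $\sG^0_x$, and then combine that description with the hypothesis $\sH \subseteq \sP$ (transported along the inclusion $J_\xi \subseteq J_0$ from Lemma~\ref{lem:define-point-u-xi}).

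First I would verify that for each $1 \le i \le d$, the piece $G^i_{u_\xi,s_{i-1}}$ lies in $G_{x,0+} \cap J_0 \subseteq J_+$: indeed, $G^i_{u_\xi,s_{i-1}} \subseteq G^i_{x,s_{i-1}}$ by the $\vec{s}$-generic position of $\iota(u_\xi)$ relative to $x$ (used in Lemma~\ref{lem:define-point-u-xi}), and $s_{i-1} \ge s_0 > 0$ forces this into $G_{x,0+}$.  Consequently, the image of $J_\xi = G^0_{u_\xi,0}G^1_{u_\xi,s_0}\cdots G^d_{u_\xi,s_{d-1}}$ under the quotient $J_0 \twoheadrightarrow J_0/J_+ = \sG^0_x$ coincides with the image of $G^0_{u_\xi,0}$.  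Since $x \in \overline{\facet_\xi}$, standard Bruhat--Tits theory identifies this image as the parabolic subgroup $\tilde{\sQ}_\xi$ of $\sG^0_x$ attached to $\facet_\xi$ in the spherical building at $x$, with Levi factor $G^0_{u_\xi,0:0+} = \sL_\xi$.

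Next I would exploit the remaining property of $\facet_\xi$ from Lemma~\ref{lem:define-point-u-xi}: namely, $z' \in \overline{\facet_\xi}$.  This forces $G^0_{u_\xi,0} \subseteq G^0_{z',0}$, and passing to images in $\sG^0_x$ yields $\tilde{\sQ}_\xi \subseteq \bar{\sP}$.  Combining with $\sH_\xi \subseteq \sH \subseteq \sP$, which follows from $J_\xi \cap G_{g^{-1}y} \subseteq J_0 \cap G_{g^{-1}y}$, one obtains
\[
\sH_\xi \;\subseteq\; \tilde{\sQ}_\xi \cap \sP \;\subseteq\; \bar{\sP} \cap \sP \;=\; \sM.
\]
Since $\sL_\xi \subseteq \sM$ and $\tilde{\sQ}_\xi$ is a parabolic of $\sG^0_x$ with Levi $\sL_\xi$, a standard fact on intersecting a parabolic with a Levi subgroup in a reductive group identifies $\sQ_\xi := \tilde{\sQ}_\xi \cap \sM$ as a parabolic subgroup of $\sM$ with Levi factor $\sL_\xi$.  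This $\sQ_\xi$ satisfies the conclusion of the lemma.

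The main obstacle I anticipate is the bookkeeping in the first paragraph: verifying that every contribution to $J_\xi$ beyond $G^0_{u_\xi,0}$ vanishes modulo $J_+$, and that the surviving image is indeed the expected parabolic of $\sG^0_x$.  Both points rely on the $\vec{s}$-generic embedding $\iota$ fixed in Lemma~\ref{lem:define-point-u-xi} together with the classical identification of the spherical star of $x$ with the parabolic subgroups of $\sG^0_x$; once these are in hand, the geometric input that $z' \in \overline{\facet_\xi}$ makes the rest of the argument essentially formal.
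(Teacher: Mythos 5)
Your proof is correct and takes essentially the same approach as the paper's: you identify the image $\tilde{\sQ}_\xi$ of $J_\xi$ in $\sG^0_x$ (the paper calls it $\sP'$), observe it lies in $\bar{\sP}$ because $z'\in\overline{\facet_\xi}$, and combine with $\sH_\xi\subseteq\sH\subseteq\sP$ to land in a parabolic $\tilde{\sQ}_\xi\cap\sM=\sP'\cap\sP$ of $\sM$ with Levi $\sL_\xi$. Your first paragraph just makes explicit the computation $J_\xi/J_+=G^0_{u_\xi,0}/G^0_{x,0+}$, which the paper asserts directly.
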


\begin{proof}
Recall that $J_\xi\subset J_0$, and so $J_{+}\subset J_{\xi,+}$, which implies that $J_\xi/J_+\subset J_0/J_+=\sG_x^0$. Thus the image $\sH_\xi$ of $J_\xi\cap G_{g^{-1}y}$ is a subgroup of the image $\sH$ of $J_0\cap G_{g^{-1}y}$ in $\sG_x^0$, which by hypothesis lies in the proper parabolic subgroup $\sP$ of $\sG_x^0$. On the other hand, we chose $u_\xi$ so that
\[J_\xi/J_+=G_{u_\xi,0}^0/G_{x,0+}^0\subset G_{z',0}^0/G_{x,0}^0=\bar{\sP},
\]
the parabolic subgroup opposite to $\sP$. Therefore $\sH_\xi\subset\sM=\sP\cap\bar{\sP}$. In fact, since $G_{u_\xi,0}^0/G_{x,0+}^0$ is itself a parabolic subgroup $\sP'$ of $\sG_x^0$ with Levi factor $\sL_\xi$, we can set $\sQ_\xi=\sP'\cap\sP$, which is a parabolic subgroup of $\sM$ with Levi decomposition $\sQ_\xi=\sL_\xi\sN_\xi$, and deduce further that
\[\sH_\xi\subset\sL_\xi\sN_\xi=\sQ_\xi\subset\sM,
\]
as desired.
\end{proof}

The crucial technical step is for us to compare the Heisenberg--Weil representations $\kappa$ and $\kappa_\xi$.

\begin{proposition}\label{prop:isotypic-restriction}
The restriction of $\kappa$ to $J_\xi$ is $\kappa_\xi$-isotypic.
\end{proposition}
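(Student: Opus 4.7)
The plan is to compare the tensor decompositions $\kappa = \bigotimes_{i=0}^{d}\kappa^i$ and $\kappa_\xi = \bigotimes_{i=0}^{d}\kappa_\xi^i$ factor by factor, exploiting the fact that both are built from the same sequence of characters $\vec{\phi} = (\phi^0, \ldots, \phi^d)$.

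First I would establish the necessary compatibilities between the Kim--Yu groups for $\Sigma$ and for $\Sigma_\xi$. By the $\vec{s}$-generic choice of $\iota$ in Lemma~\ref{lem:define-point-u-xi}, the geodesic from $x$ to $\iota(u_\xi)$ avoids every hyperplane of the form $H^i_{\psi,s_{i-1}}$; together with $\bL_\xi^i \subseteq \bG^i$ being a Levi subgroup, this yields inclusions $\cJ^{i+1}(\Sigma_\xi,G) \subseteq \cJ^{i+1}(\Sigma,G)$ and $\cJ^{i+1}_+(\Sigma_\xi,G) \subseteq \cJ^{i+1}_+(\Sigma,G)$ for every $i$, extending the inclusion $J_\xi \subseteq J_0$ from \eqref{eqn:J-xi-inside-J}. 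Since the generic characters $\phi^i$ are common to both data and both canonical extensions $\hat\phi^i$ are determined by the same character on the Moy--Prasad subgroup $G^i_{x,r_i}$, the extension $\hat\phi^i$ for $\Sigma$ restricts along these inclusions to the canonical extension for $\Sigma_\xi$. Passing to the Heisenberg quotients yields a natural homomorphism $\cH^{i+1}_\xi \to \cH^{i+1}$ which is an isomorphism on centres and compatible with central characters.

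The main step is then to invoke the uniqueness in the Heisenberg--Weil construction recalled in \cite[\S3.25--3.26]{HakimMurnaghan2008}: the representation $\kappa^i$ is characterized as the unique irreducible representation of $J^{i+1} = J^i \cJ^{i+1}$ whose restriction to $\cZ^{i+1}$ has central character $\hat\phi^i$, and similarly for $\kappa_\xi^i$. The restriction of $\kappa^i$ to $J^{i+1}_\xi$ factors through a map into $\mathrm{Sp}(\cW^{i+1}_\xi)\ltimes \cH^{i+1}_\xi$ with the correct central character, so the Stone--von Neumann theorem forces every irreducible constituent to be isomorphic to $\kappa_\xi^i$. I expect the main obstacle to be the verification that the distinguished Heisenberg--Weil lift $\hat\eta$ attached to $\Sigma$ restricts, under the inclusion of groups above, to the corresponding lift $\hat\eta_\xi$ attached to $\Sigma_\xi$; this amounts to checking that the conjugation action of $J^i_\xi$ on $\cJ^{i+1}_\xi$ is compatible with the corresponding action of $J^i$ on $\cJ^{i+1}$ at the level of the symplectic groups, and ultimately hinges on the functoriality of the metaplectic cover with respect to symplectic subspaces that are defined over the residue field.

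Finally, combining these factor-wise isotypic decompositions through the product decomposition of $J_\xi$ analogous to \eqref{eq:product}, together with the tensor-product structure of $\kappa$ and $\kappa_\xi$, yields that $\kappa|_{J_\xi}$ is $\kappa_\xi$-isotypic, as required.
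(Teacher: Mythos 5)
Your overall strategy mirrors the paper's: compare $\kappa$ and $\kappa_\xi$ factor by factor, identify the centres of the Heisenberg quotients, and appeal to Stone--von Neumann together with the uniqueness of the Weil extension. But one of your foundational reductions does not hold, and without it the rest of the argument cannot get started.

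The claimed inclusions $\cJ^{i+1}(\Sigma_\xi,G) \subseteq \cJ^{i+1}(\Sigma,G)$ and $\cJ^{i+1}_+(\Sigma_\xi,G)\subseteq\cJ^{i+1}_+(\Sigma,G)$ do \emph{not} follow from Lemma~\ref{lem:define-point-u-xi}, and they fail in general. What the $\vec{s}$-genericity of $\iota$ buys is that the geodesic $(x,u_\xi]$ crosses no hyperplane $H^j_{\psi,s_{j-1}}$; this yields $U^j_{\alpha,u_\xi,s_{j-1}}\subseteq U^j_{\alpha,x,s_{j-1}}$ for $\alpha\in\Phi^j\setminus\Phi(\bL^j_\xi)$, hence $J_\xi\subseteq J_0$. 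But $\cJ^{i+1}=(G^i,G^{i+1})_{r_i,s_i}$ is built from root subgroups $\bU_{\alpha,\cdot,r_i}$ with $\alpha\in\Phi^i$, at level $r_i=2s_i$, which is strictly larger than the genericity level $s_{i-1}$. For $\alpha\in\Phi^i\setminus\Phi(\bL^i_\xi)$, the value $\alpha(u_\xi)-\alpha(x)$ is nonzero and of either sign depending on the root, so there is no reason that $\bU_{\alpha,u_\xi,r_i}\subseteq\bU_{\alpha,x,r_i}$; whenever $\alpha(u_\xi)>\alpha(x)$ one expects the opposite containment. There is therefore no canonical homomorphism $\cH^{i+1}_\xi\to\cH^{i+1}$ induced by an inclusion of the $\cJ$-groups.

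The paper's proof is structured precisely to avoid this. It introduces the groups $\cJ^L=(L^i_\xi,L^{i+1}_\xi)_{w,(r_i,s_i)}$, which are literally independent of $w\in\{u_\xi,x\}$ because $\alpha(u_\xi)=\alpha(x)$ for $\alpha\in\Phi(\bL_\xi)$, and uses the inclusion $\cJ^L\subseteq\cJ_x$ (which \emph{does} hold, being an inclusion at the same point $x$) to embed $\cW^L$ in $\cW_x$. Genericity is then used --- exactly as in the proof of \cite[Thm 7.5]{KimYu2017} --- to produce an \emph{abstract} isomorphism $\cW_u\cong\cW^L$, not a group inclusion. Similarly, the identification of the centres $\cZ_u\cong\cZ_x$ is abstract, routed through $C^i_{r_i}/\ker\phi^i$ using the triviality of $\phi^i$ on derived and root subgroups, rather than through any inclusion $\cJ_{u+}\subseteq\cJ_{x+}$. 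You would need to build these intermediaries into your argument.

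Two smaller points. First, the assertion that ``$\kappa^i$ is characterized as the unique irreducible representation of $J^{i+1}=J^i\cJ^{i+1}$ whose restriction to $\cZ^{i+1}$ has central character $\hat\phi^i$'' is too strong; many irreducible representations of $J^{i+1}$ have that central character. The uniqueness lives one level down: $\eta_w$ is the unique irreducible representation of the Heisenberg group $\cH_w$ with that central character (Stone--von Neumann), and $\hat\eta_w$ is the unique extension of $\eta_w$ to $\mathrm{Sp}(\cW_w)\ltimes\cH_w$; $\kappa^i$ is then assembled from $\hat\eta_w$ and $\phi^i$ via the product decomposition $J^{i+1}_w=J^i_w\cJ_w$. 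Second, your ``main obstacle'' --- that $\hat\eta$ should restrict to $\hat\eta_\xi$ --- is indeed the crux, but the paper's way of resolving it is concrete rather than via an appeal to functoriality of the metaplectic cover: once $\cW_u$ is realised as a symplectic subspace of $\cW_x$ with isomorphic centre, one restricts $\hat\eta_x$ to $\mathrm{Sp}(\cW_u)\ltimes\cH_u$, notes that the further restriction to $\cH_u$ is $\eta_u$-isotypic by Lemma~\ref{lem:stone-von-neumann}, and then invokes uniqueness of the Weil extension to conclude $\hat\eta_x$ is $\hat\eta_u$-isotypic there. This is the argument you would want to make precise.
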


We will need the following basic lemma about Heisenberg representations.

\begin{lemma}\label{lem:stone-von-neumann}
Suppose $\cH_1\subset\cH_2$ are finite Heisenberg $p$-groups with common centre $\cZ$. Let $\phi$ be a non-trivial character of $\cZ$, and let $\eta_1,\eta_2$ be irreducible representations of $\cH_1,\cH_2$, respectively, both with central character $\phi$. Then $\eta_2|_{\cH_1}\simeq\eta_1^{\oplus[\cH_2:\cH_1]^{1/2}}$.
\end{lemma}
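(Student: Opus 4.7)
The plan is to apply the Stone--von Neumann theorem twice, once to each Heisenberg group. Recall that for a finite Heisenberg $p$-group $\cH$ with centre $\cZ$, there is (up to isomorphism) a unique irreducible representation of $\cH$ with any prescribed non-trivial central character $\phi$, and its dimension equals $[\cH:\cZ]^{1/2}$. In particular, $\cH/\cZ$ has square order and $\dim \eta_i = [\cH_i:\cZ]^{1/2}$ for $i=1,2$.

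First I would restrict $\eta_2$ to $\cH_1$. Since $\cZ$ is contained in the centre of $\cH_1$ (by hypothesis $\cZ$ is the common centre) and $\eta_2|_{\cZ}$ acts by the scalar character $\phi$, the same is true of any irreducible $\cH_1$-subquotient of $\eta_2|_{\cH_1}$: each has central character $\phi$ on $\cZ$. Applying Stone--von Neumann to $\cH_1$, the unique irreducible representation of $\cH_1$ with central character $\phi$ is $\eta_1$, so every irreducible constituent of $\eta_2|_{\cH_1}$ is isomorphic to $\eta_1$. Complete reducibility (as $\cH_1$ is a finite group) then gives $\eta_2|_{\cH_1} \simeq \eta_1^{\oplus m}$ for some non-negative integer $m$.

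To determine $m$, I would compare dimensions:
\[
m \cdot [\cH_1:\cZ]^{1/2} \;=\; \dim \eta_2|_{\cH_1} \;=\; [\cH_2:\cZ]^{1/2},
\]
whence $m = \bigl([\cH_2:\cZ]/[\cH_1:\cZ]\bigr)^{1/2} = [\cH_2:\cH_1]^{1/2}$. The fact that $[\cH_2:\cH_1]$ is automatically a perfect square is a byproduct of the argument (both $[\cH_i:\cZ]$ are squares).

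There is essentially no obstacle here; the only thing to check carefully is the hypothesis needed to invoke Stone--von Neumann, namely that $\phi$ is non-trivial and $\cZ$ is the full centre of each $\cH_i$, both of which are given. No further input from the Kim--Yu setup is required, as this is a purely group-theoretic fact about finite Heisenberg $p$-groups.
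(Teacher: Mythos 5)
Your proof is correct and essentially the same as the paper's: both invoke the Stone--von Neumann theorem to identify $\eta_1$ as the unique irreducible $\cH_1$-representation with central character $\phi$, and then determine the multiplicity by a dimension count. You simply spell out the intermediate steps (every constituent of $\eta_2|_{\cH_1}$ has central character $\phi$, hence is $\eta_1$; then compare dimensions) which the paper leaves implicit.
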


\begin{proof}
By the Stone--von Neuman theorem, the irreducible representations of a finite Heisenberg $p$-group $\cH_i$ with centre $\cZ$ are either characters or $[\cH_i:\cZ]^{1/2}$-dimensional, and the latter are uniquely determined by their non-trivial central characters.
\end{proof}

\begin{proof}[Proof of Proposition \ref{prop:isotypic-restriction}]


Recall that $\kappa$ is a tensor product $\kappa_0\otimes \kappa_1 \otimes \cdots \otimes \kappa_{d}$, where each $\kappa_i$ is obtained from the character $\phi^i$ via a Heisenberg-Weil lift (and $\kappa_d=\phi^d$). We wish to compare the constructions of the representations $\kappa_i$ and $\kappa_{\xi,i}$ for each $0\leq i \leq d-1$.  Fix an index $i$ throughout.

The essential difference between the groups $J$ and $J_\xi$ is the replacement of the point $x$ with $u=u_\xi$.  We therefore write $J_x=J$ and $J_u=J_\xi$ in order to keep track of and further emphasize this distinction. For each $w\in\{u,x\}$, we have the groups
\[
\cJ_w=(G^i,G^{i+1})_{w,(r_i,s_i)},\text{ and}\ 
\cJ_{w+}=(G^i,G^{i+1})_{w,(r_i,s_i+)}.
\]
Recall that we extend the character $\phi^i$ to a character $\hat{\phi}^i_w$ of $\cJ_{w+}$ and write $\cN_w=\ker\hat{\phi}^i_w$. We further define
\begin{align*}
\cH_w&=\cJ_w/\cN_w,\\
\cW_w&=\cJ_w/\cJ_{w+},\text{ and}\\
\cZ_w&=\cJ_{w+}/\cN_w.
\end{align*}
By \cite[\S 3.3]{HakimMurnaghan2008}, the character $\hat{\phi}_w^i$ of $\cJ_{w+}$ is characterized by the property that it coincides with $\phi^i$ on $G_{w,r_i}^i$, and is trivial on $(G^i,G^{i+1})_{w,(r_i+,s_i+)}$. Thus, in particular, $G_{w,r_i}^i\cN_w=\cJ_{w+}$, whence
\[\cJ_{w+}/\cN_w\simeq G_{w,r_i}^i/\ker\hat{\phi}^i.
\]
Fix a minimal Levi subgroup $C^i$ of $G^i$, arising as the centralizer of a maximal split torus $S^i=\bS^i(F)$ such that both $u$ and $x$ are contained in the apartment $\apart(\bG^i,\bS^i,F)$. Then,as noted previously, the Moy--Prasad filtration on $C^i$ is independent of the choice of $w\in\{u,x\}$. Since $\phi^i$ is a character of $G^i$, it is trivial on the derived subgroup of $G^i$, and hence on the root subgroups of $G^i$, which implies that
\[G_{w,r_i}^i/\ker\phi^i\simeq C_{r_i}^i/\ker\phi^i.
\]
This allows us to identify $\cZ_u$ with $\cZ_x$ and, moreover, allows us to observe that the restrictions of $\hat{\phi}_u^i$ and $\hat{\phi}_x^i$ coincide with that of $\phi^i$ under this identification. Thus for each $w\in \{u,x\}$, the character $\phi^i$ defines a symplectic structure on $\cW_w$, and the structure of a Heisenberg $p$-group on $\cH_w$, with centre $\cZ_w$. Our first step is to show that $\cW_u$ is a symplectic subspace of $\cW_x$.

Begin by considering our Levi subgroups $L^j:=L_\xi^j$ for $j\in\{i,i+1\}$. 
Since $L_{x,r}=L_{u,r}$ for all $r\geq 0$ by construction of $L$, the following groups are in fact independent of the choice of $w\in\{u,x\}$:
\[
\cJ^L=(L^i,L^{i+1})_{w,(r_i,s_i)} \text{ and}\
\cJ_+^L=(L^i,L^{i+1})_{w,(r_i,s_i+)}.
\]
Set $\cW^L = \cJ^L/\cJ^L_+$.  The genericity of the embedding of $\buil(L^i)$ into $\buil(G^i)$ directly implies that $\cW_u\cong\cW^L$, as in the proof of \cite[Thm 7.5]{KimYu2017}. Because the root subgroups of $L^{i+1}$ are root subgroups of $G^{i+1}$, we deduce that $\cW^L$ is a subspace of the $\resk$-vector space $\cW_x$; the non-degeneracy of $\cW_u$ with respect to the symplectic form $\phi^i\circ[-,-]$ thus implies that $\cW_u$ is a symplectic subspace of $\cW_x$.

The inclusion of $\cJ^L$ into $\cJ_x$ thus induces an injective homomorphism $i:\cH_u\hookrightarrow\cH_x$, which restricts to an isomorphism of the centres of these Heisenberg $p$-groups. Let $(\eta_w,V_w)$ denote the Heisenberg representation of $\cH_w$ with central character $\phi^i$. By Lemma \ref{lem:stone-von-neumann}, $\eta_x$ becomes $\eta_u$-isotypic upon restriction to $\cH_u$.

Now consider the Weil lifts of each of these representations.  For $w\in\{u,x\}$, they are homomorphisms
\[\hat{\eta}_w:\mathrm{Sp}(\cW_w)\ltimes\cH_w\rightarrow\mathrm{Aut}(V_w).
\]
Note that $\hat{\eta}_w$ is characterized as the unique representation of $\mathrm{Sp}(\cW_w)\ltimes\cH_w$ extending $\eta_w$ (up to certain choices in small residual characteristic; see \cite[\S 2.3]{HakimMurnaghan2008}). The restriction of $\hat{\eta}_x$ to the subgroup $\mathrm{Sp}(\cW_u)\ltimes\cH_u$, being $\eta_u$-isotypic upon further restriction to $\cH_u$, must therefore be $\hat{\eta}_u$-isotypic.

Finally, the representation $\kappa_{w,i}$ of $J_w^{i+1}$ is obtained from $\phi^i$ and $\hat{\eta}_w$ by making the identification
\[J_w^{i+1}=G_{w,0}^0G_{w,s_0}^1\cdots G_{w,s_i}^{i+1}=J_w^i\cJ_w
\]
and then, for all $g\in J_w^i$ and all $j\in\cJ_w$, setting
\[\kappa_{w,i}(gj)=\phi^i(g)\hat{\eta}_w(g,j).
\]
Thus, upon restriction, the representation $\kappa_{x,i}$ is $\kappa_{u,i}$-isotypic, and the same is true of their respective inflations to $J_0=J^d_0(\Sigma,G)$ and $J_\xi=J^d_0(\Sigma_\xi,G)$.
\end{proof}


With this, we are ready to complete the proof.

\begin{proof}[Proof of Theorem \ref{thm:non-cusp}]
Let $\tau$ be an irreducible component of $\tau_0(y,g)=\Ind_{J_0\cap G_{g^{-1}y}}^{G_{g^{-1}y}}\ \lambda_0$, where $\lambda_0=\sigma_0\otimes\kappa$. Frobenius reciprocity implies $\Hom_{J_0\cap G_{g^{-1}y}}(\tau,\sigma_0\otimes\kappa)\neq 0$.  We therefore have non-trivial intertwining between these representations on the smaller subgroup $J_0\cap G_{g^{-1}y}\cap M_{x,0}^0 G_{x,0+}$, whose image in $\sG_x^0$ lies in $\sM$.  Since $\sigma_0|_{\sM} = \oplus_{\xi\in\Xi}\xi$, we may choose $\xi$ for which
\[\Hom_{J_0\cap G_{g^{-1}y}\cap M_{x,0}G_{x,0+}}(\tau,\xi\otimes\kappa)\neq 0,
\]
where here we think of $\xi$ as a representation of  $M_{x,0}^0G_{x,0+}$ by inflation. 
By Lemmata \ref{lem:define-point-u-xi} and \ref{lem:image-parabolic-of-M}, we have
\[J_\xi\cap G_{g^{-1}y}\subset J_0\cap G_{g^{-1}y}\cap M_{x,0}G_{x,0+}.
\]
Moreover, by Proposition \ref{prop:isotypic-restriction}, the restriction to $J_\xi\cap G_{g^{-1}y}$ of $\kappa$ is $\kappa_\xi$-isotypic. Therefore we may further conclude that
\[\Hom_{J_\xi\cap G_{g^{-1}y}}(\tau,\xi\otimes\kappa_\xi)\neq 0.
\]
The cuspidal support of $\xi$ is $(\sL_\xi,\zeta_\xi)$; thus choosing the parabolic $\sQ_\xi$ of $\sM$ with Levi factor $\sL_\xi$ as in Lemma~\ref{lem:image-parabolic-of-M}, we have
\[0\neq\Hom_{\sM}(\xi,\Ind_{\sQ_\xi}^\sM\ \zeta_\xi\otimes\mathds{1})=\Hom_{\sQ_\xi}(\xi,\zeta_\xi\otimes\mathds{1}),
\]
with the latter identification following by Frobenius reciprocity.

Therefore it follows from Lemma \ref{lem:image-parabolic-of-M} that
\[\Hom_{J_\xi\cap G_{g^{-1}y}}(\tau,\zeta_\xi\otimes\kappa_\xi)\neq 0.
\]
Since $\tau$ is irreducible as a representation of $G_{g^{-1}y}$, applying Frobenius reciprocity reveals  that $\tau$ is a subrepresentation of
\[\pi_\xi:=\Ind_{J_\xi\cap G_{g^{-1}y}}^{G_{g^{-1}y}}\ \zeta_\xi\otimes\kappa_\xi.
\]
Note that by Mackey theory, we have 
\[\Res_{G_{g^{-1}y}}^G\cInd_{J_\xi}^G\ \zeta_\xi\otimes\kappa_\xi=\bigoplus_{h:G_{g^{-1}y}\backslash G/J_\xi}\Ind_{^hJ_\xi\cap G_{g^{-1}y}}^{G_{g^{-1}y}}\ ^h(\zeta_\xi\otimes\kappa_\xi).
\]
The summand for $h=1$ is exactly the representation $\pi_\xi$. It follows that $\tau$ is contained in $\cInd_{J_\xi}^G\ \zeta_\xi\otimes\kappa_\xi$, and hence, by Lemma~\ref{lem:induced-from-type-nsc} in some finitely generated subquotient, which admits an irreducible quotient $\pi'$ containing $\tau$. Since $(J_\xi,\zeta_\xi\otimes \kappa_\xi)$ is an $\mathfrak{S}$-type for a set of non-cuspidal inertial classes $\mathfrak{S}$ supported on $L_\xi$, this representation $\pi'$ is non-cuspidal.  Consequently, $(G_{g^{-1}y},\tau)$ is not a $[G,\pi]_G$-type.
\end{proof}

\section{Projection to the building of a twisted Levi subgroup}\label{sec:projection}

In Theorem \ref{thm:non-cusp}, we have a general procedure for showing that certain Mackey components $\tau(y,g)$ are atypical, based on the image $\sH$ of $G_{g^{-1}y}\cap J_0$ in $\sG^0_x$. In this section, we show that the hypotheses apply to many, but usually not all, Mackey components.

First, we establish a sufficient condition for a point $g^{-1}y\in\buil(G)$ to satisfy the hypotheses of Theorem \ref{thm:non-cusp} --- briefly, that the closest point on $\buil(G^0)$ to $g^{-1}y$ is not in $\tilde{[x]} :=\{[x]\}\times X_*(Z)\otimes_\mathbb{Z}\mathbb{R}$. This sufficient condition is satisfied by a large portion of the points $g^{-1}y\in\buil(G)$.

To state this precisely, note that the building $\buil(G)$ is a CAT(0) space, as is the image of $\buil(G^0)$ in $\buil(G)$. Thus any point $z\in\buil(G)$ has a unique closest point $z^0$ in $\buil(G^0)$; in this way, we may define a projection map $\proj_{\buil(G^0)}:\buil(G)\rightarrow\buil(G^0)$. We caution the reader that this projection map does not restrict to a projection map on each apartment. 

\begin{lemma}\label{lem:projection}
For any $z\in\buil(G)$, let $z^0= {\mathrm{proj}}_{\buil(G^0)}(z)$.  Then one has $G_z\cap G^0\subseteq G^0_{z^0}$.
\end{lemma}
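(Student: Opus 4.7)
The plan is to deduce the claim from the uniqueness of the nearest-point projection in a CAT(0) space, combined with the fact that the fixed embedding $\buil(G^0)\hookrightarrow\buil(G)$ is $G^0$-equivariant, so that its image is setwise stabilised by the $G^0$-action on $\buil(G)$.

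First I would recall the two structural ingredients. The group $G$ acts on the CAT(0) space $\buil(G)$ by isometries, as reviewed in Section~\ref{sec:buildings}. Secondly, the generic embedding of $\buil(G^0)$ into $\buil(G)$ underlying the datum $\Sigma$ is $G^0$-equivariant, in the sense that for $g\in G^0$ and $y\in\buil(G^0)$ the action of $g$ on the image of $y$ in $\buil(G)$ agrees with the image of the standard action of $g$ on $y\in \buil(G^0)$. This is a standard feature of the Bruhat--Tits setup implicit in Section~\ref{sec:kimyu}; in particular, $G^0$ preserves $\buil(G^0)\subseteq\buil(G)$ setwise, and the stabiliser $G^0_{z^0}$ computed inside $\buil(G^0)$ coincides with the $G^0$-stabiliser of $z^0$ viewed as a point of $\buil(G)$.

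With these in hand, the argument is a one-line CAT(0) computation. Fix $g\in G_z\cap G^0$. Then $g$ is an isometry of $\buil(G)$ which fixes $z$ and preserves $\buil(G^0)$, so $g\cdot z^0\in\buil(G^0)$ and
$$d(g\cdot z^0,z)=d(g\cdot z^0,g\cdot z)=d(z^0,z).$$
Thus $g\cdot z^0$ also realises the minimal distance from $z$ to the closed convex subset $\buil(G^0)\subseteq\buil(G)$. By uniqueness of the nearest-point projection onto a closed convex subset of a CAT(0) space, $g\cdot z^0=z^0$, whence $g\in G^0_{z^0}$.

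I do not foresee any real obstacle. The only point requiring a moment's care is the $G^0$-equivariance of the sub-building embedding (and the attendant fact that the projection $\proj_{\buil(G^0)}$ is intrinsic, being defined purely in CAT(0) terms rather than via any choice of apartment), which I would cite from the Bruhat--Tits literature rather than reprove; once this is granted, uniqueness of the CAT(0) projection finishes the argument immediately.
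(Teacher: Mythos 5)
Your argument is exactly the paper's: both use the $G$-isometric action, the fact that $g\in G^0$ carries $\buil(G^0)$ to itself, and the uniqueness of the nearest-point projection onto the convex subset $\buil(G^0)$ in the CAT(0) space $\buil(G)$ to conclude $gz^0=z^0$. Your extra preamble on $G^0$-equivariance of the embedding is a correct and reasonable elaboration of something the paper takes as given, but the proof is the same.
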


\begin{proof}
Let $d$ denote the metric on $\buil(G)$.  Since each $g\in G^0\cap G_z$ fixes $z$ and acts by isometries on $\buil(G)$, we have $d(z,gz^0)=d(gz,gz^0)=d(z,z^0)$.  Since $g\in G^0$, $gz^0$ is again in $\buil(G^0)$, whence $gz^0=z^0$ by the uniqueness of $z^0$.
\end{proof}

The group $G_{g^{-1}y}\cap G_x$ fixes $x$ and $g^{-1}y$, hence the geodesic $[x,g^{-1}y]$ as well each element of $\tilde{[x]}$.  It may also fix more, such as if this geodesic passes through the interior of a chamber.  Set $\Gamma(x,g^{-1}y) = \buil(G)^{G_x\cap G_{g^{-1}y}}$ to be the full set of fixed points.  


\begin{proposition}\label{prop:non-cusp-hypothesis}
If  $\proj_{\buil(G^0)}\Gamma(x,g^{-1}y)\neq \tilde{[x]}$ 
then $\sH$ is contained in a proper parabolic subgroup of $\sG_x^0$. 
\end{proposition}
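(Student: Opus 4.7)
The plan proceeds in three steps: extract a witness point, identify a proper parabolic of $\sG^0_x$, and verify that $\sH$ lies inside it.

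First, I would extract a $z\in\Gamma(x,g^{-1}y)$ with $z^0:=\proj_{\buil(G^0)}(z)\notin\tilde{[x]}$.  Every $w\in\tilde{[x]}$ satisfies $G_w=G_x$ by Proposition~\ref{prop:Gx-def}, so $w\in\Gamma(x,g^{-1}y)$; in our cuspidal setting $Z(\bG^0)/Z(\bG)$ is anisotropic, forcing $\tilde{[x]}\subseteq\buil(G^0)$ and hence $\tilde{[x]}\subseteq\proj_{\buil(G^0)}\Gamma(x,g^{-1}y)$.  The hypothesis then produces the desired $z$.

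Second, I would identify the proper parabolic.  The geodesic $[x,z^0]\subset\buil(G^0)$ leaves $x$ through the interior of a unique facet $F^0$ of $\buil(G^0)$ with $x\in\bar{F^0}$, and $[z^0]\neq[x]$ forces $[F^0]\neq[x]$.  A standard continuity argument for affine roots yields $G^0_{x,0+}\subseteq G^0_{F^0,0}\subseteq G^0_{x,0}$, so $\sP:=G^0_{F^0,0}/G^0_{x,0+}$ is a proper parabolic subgroup of $\sG^0_x$.

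Third, I would show $\sH\subseteq\sP$.  Under the canonical isomorphism $J_0/J_+\cong G^0_{x,0}/G^0_{x,0+}$, this reduces to $J_0\cap G_{g^{-1}y}\subseteq G^0_{F^0,0}\cdot J_+$.  Fix $h\in J_0\cap G_{g^{-1}y}$: since $h\in G_x\cap G_{g^{-1}y}$ it fixes $z$, and $h\in J_0\subseteq\ker\kappa$ gives $h\in G_{z,0}$.  Choose an apartment $\apart$ of $\buil(G)$ containing $x$, $z$ and a maximal $F$-split torus of $\bG^0$; then $\apart^0:=\apart\cap\buil(G^0)$ is an apartment of $G^0$ containing $x$ and $z^0$.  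An Iwahori-style decomposition of $J_0\cap G_{z,0}$ in $\apart$ relative to the twisted Levi $\vec{\bG}$ factors $h=h_0\cdot h_X$ with $h_0\in G^0_{x,0}\cap G_{z,0}$ and $h_X\in X:=G^1_{x,s_0}\cdots G^d_{x,s_{d-1}}\subseteq J_+$; using the projection invariance $\psi(z)=\psi(z^0)$ for affine roots $\psi$ of $\bG^0$, together with Lemma~\ref{lem:projection} applied to $h_0\in G^0$, we obtain $h_0\in G^0_{x,0}\cap G^0_{z^0,0}\subseteq G^0_{F^0,0}$, whence $\bar h\in\sP$.

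The main obstacle is producing this factorization $h=h_0\cdot h_X$ with $h_0\in G^0_{x,0}\cap G_{z,0}$: one must choose the apartment $\apart$ compatibly and execute an Iwahori-type decomposition of $J_0\cap G_{z,0}$ into Bruhat--Tits root-subgroup factors for $G^0$- and non-$G^0$-roots, invoking the $\vec{s}$-genericity of the embedding $\buil(G^0)\hookrightarrow\buil(G)$ to ensure that the non-$G^0$ factors really lie in $X\subseteq G_{x,0+}$ rather than contaminating the $G^0$-part.
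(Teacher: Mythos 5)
Your argument follows the same overall strategy as the paper's: extract a witness $z\in\Gamma(x,g^{-1}y)$ projecting outside $\tilde{[x]}$, use a facet of $\buil(G^0)$ adjacent to $x$ to name a proper parabolic $\sP$, and then combine Lemma~\ref{lem:projection} with the Kottwitz-kernel characterization of parahorics to show $\sH\subseteq\sP$. The published proof goes slightly differently at two spots. First, it takes $z^0\in\facet$ itself adjacent to $x$ (rather than reading off $\facet$ from the direction in which $[x,z^0]$ leaves $x$); this is a cosmetic difference. Second, and more importantly, the paper's justification that $\sH\subseteq\sP$ is terse: it records only that "the image of $J_0$ in $J_0/J_+$ equals the image of $G^0_{x,0}$," hence the image of $J_0\cap G_{g^{-1}y}$ is contained in the image of $G^0_{x,0}\cap G_z$. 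You correctly flag this as the delicate point, and your proposed fix — factoring $h=h_0h_X$ with $h_0\in G^0_{x,0}\cap G_{z,0}$ and $h_X\in X\subseteq J_+$ — is the right idea, but it is not free. Two caveats: (i) the apartment $\apart$ you choose to contain $x$ and $z$ and to meet $\buil(G^0)$ in an apartment has no reason to contain the CAT(0) projection $z^0=\proj_{\buil(G^0)}(z)$, since that projection is defined globally rather than apartment-by-apartment (see the warning at the start of Section~\ref{sec:projection}); and (ii) the claimed Iwahori-type factorization of $J_0\cap G_{z,0}$ relative to the twisted Levi $\bG^0$ is itself a nontrivial assertion, since $\bG^0$ is not a rational Levi of $\bG$ and the compatibility of the $G^0$-vs-non-$G^0$ root factorization with the intersection by $G_{z,0}$ needs genericity of the embedding, exactly as you note. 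Your write-up is honest in marking this as the main obstacle, and the paper does not spell out its resolution either, so your proposal is a faithful (indeed somewhat more detailed) reconstruction of the published argument rather than a genuinely different route.
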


\begin{proof}
Suppose $\proj_{\buil(G^0)}\Gamma(x,g^{-1}y)$ is strictly larger than $\tilde{[x]}$;  
then it meets a facet $\facet$ of $\buil(G^0)$ adjacent to $x$.  For any $z^0\in \facet$, $\sP = G^0_{z^0,0}/G^0_{x,0+}$ is a proper parabolic subgroup of $\sG^0_x$.

So choose $z^0\in \facet$ lying in the image of the projection of some point $z \in \Gamma(x,g^{-1}y)$.  Then $J\cap G_{g^{-1}y} \subset G_x\cap G_{g^{-1}y} = G_{\Gamma(x,g^{-1}y)}\subseteq G_{z}$.  Moreover, since the image of $J_0$ in $J_0/J_+$ is equal to the image of $G^0_{x,0}$, it follows that the image $\sH$ of $J_0\cap G_{g^{-1}y}$ in $J_0/J_+$ is contained in the image of $G^0_{x,0}\cap G_{z}$.  By Lemma~\ref{lem:projection}, $G^0\cap G_z\subseteq G^0_{z^0}$; intersecting further with the kernel of the Kottwitz homomorphism we deduce that $G^0_{x,0}\cap G_{z}\subseteq G^0_{z^0,0}$, as required.
\end{proof}

In particular, the hypotheses of this lemma are satisfied whenever $\proj_{\buil(G^0)}(g^{-1}y) \notin \tilde{[x]}$.  Applying Theorem \ref{thm:non-cusp} yields the following sufficient criterion.

\begin{corollary}
If $\proj_{\buil(G^0)}\Gamma(x,g^{-1}y) \neq \tilde{[x]}$, then $\tau(y,g)$ does not contain any $[G,\pi]_G$-typical representations.
\end{corollary}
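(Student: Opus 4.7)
The corollary will follow directly by combining Proposition~\ref{prop:non-cusp-hypothesis} with Theorem~\ref{thm:non-cusp}, after a small reduction that passes from the type $(J,\lambda)$ to the sub-type $(J_0,\lambda_0)$ on which Theorem~\ref{thm:non-cusp} is directly phrased.

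First, I would note that the standing hypothesis is purely a statement about the building and the datum $\Sigma$: since $J_0=J_0(\Sigma,G)$ depends only on $\Sigma$, the finite image $\sH$ of $J_0\cap G_{g^{-1}y}$ in $\sG_x^0$ is unaffected by the choice of irreducible component $\sigma_0$ of $\sigma|_{G^0_{x,0}}$ used to form $\lambda_0$. Under our hypothesis, Proposition~\ref{prop:non-cusp-hypothesis} therefore produces a proper parabolic subgroup $\sP$ of $\sG_x^0$ containing $\sH$, for \emph{every} such choice of $\sigma_0$.

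Next I would pass from $\tau(y,g)$ to the various $\tau_0(y,g)$: by the Mackey-theoretic comparison recorded in \eqref{eq:Mackey-0}, every irreducible component $\tau$ of $\tau(y,g)$ occurs in some summand $\Ind_{J_0\cap G_{g^{-1}y}}^{G_{g^{-1}y}}\ {}^h\lambda_0$, with $h\in J$. This summand is precisely the Mackey component $\tau_0(y,g)$ associated to the cuspidal datum $\Sigma_0'$ obtained from $\Sigma$ by replacing $\sigma_0$ with the irreducible component ${}^h\sigma_0$ of $\sigma|_{G^0_{x,0}}$. Because the hypothesis of Theorem~\ref{thm:non-cusp} is insensitive to this replacement, I can apply the theorem to $\Sigma_0'$ to obtain a non-cuspidal irreducible representation $\pi'$ of $G$ with $\Hom_{G_{g^{-1}y}}(\tau,\pi')\neq 0$.

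Finally, since $\pi$ is supercuspidal while $\pi'$ is not, the two are not inertially equivalent, so $\tau$ fails to be $[G,\pi]_G$-typical. The main obstacle here is essentially bookkeeping; all the substantive content lies in the two earlier results, and the corollary amounts to the observation that their joint hypothesis is independent of the choice of $\sigma_0$.
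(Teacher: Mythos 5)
Your argument is correct and matches the route the paper takes: it chains Proposition~\ref{prop:non-cusp-hypothesis} into Theorem~\ref{thm:non-cusp}, and your extra care in tracking the passage from $\tau(y,g)$ to the various $\tau_0(y,g)$'s via \eqref{eq:Mackey-0} — noting that $\sH$, and hence the hypothesis of Theorem~\ref{thm:non-cusp}, is independent of the choice of $\sigma_0$ — fills in precisely the bookkeeping the paper leaves implicit when it says ``Applying Theorem~\ref{thm:non-cusp} yields the following sufficient criterion.''
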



This hypothesis on $g^{-1}y$ is \emph{not} necessary in order for Theorem \ref{thm:non-cusp}
to hold, as illustrated in the example below.  Nevertheless, our next result gives a set of points in $\Delta=\proj_{\buil(G^0)}^{-1}\tilde{[x]}$ where Theorem \ref{thm:non-cusp} \emph{cannot} apply --- among them, the points on the buildings of suitably complementary twisted Levi subgroups.

Since $\Sigma$ is a cuspidal datum, we may assume that $\bG^0=C_\bG(\bZ^0)$, for $Z^0=\bZ^0(F)$ some minisotropic torus of $G$. Recall that we write $Z^0_{\mathrm{b}}$ for its maximal bounded subgroup---this is the group of $\frak{o}$-rational points of its lft-N\'{e}ron model. 

\begin{definition}\label{def:Gtilde}
Let $\tilde{Z}^0=\tilde{\bZ}^0(F)$ be a minisotropic torus contained in $G^0$ such that $\tilde{Z}_b^0\subset G_x^0$, and such that if $T=\langle Z^0,\tilde{Z}^0\rangle$, then $\mathsf{T}:=T_{0}/T_{0+}$ is a maximal minisotropic torus of $\sG_x^0$.  Then $\tilde{\bG}^0=C_G(\tilde{\bZ}^0)$ is a twisted Levi subgroup of $\bG$.  We call $\tilde{G}^0=\tilde{\bG}^0(F)$ a \emph{complementary twisted Levi subgroup} (of $G$ to $G^0$).
\end{definition}

As one example, one may choose for $\tilde{Z}^0$ an unramified torus such that $\tilde{Z}^0_{0:0+}$ is a maximal torus of $\sG^0_x$. 
 Choose an embedding $\buil(\tilde{G}^0)\hookrightarrow\buil(G)$; it contains $x$.

\begin{proposition} \label{prop:twisted-levis}
If $g^{-1}y\in\buil(\tilde{G}^0)$, or more generally, if $g^{-1}y \in \buil(G)^{\tilde{Z}_0^0}$ then $\sH$ is  not contained in any proper parabolic subgroup of $\sG^0_x$.
\end{proposition}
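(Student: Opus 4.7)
The plan is to argue by contraposition. Suppose that $\sH$ is contained in some proper parabolic subgroup $\sP$ of $\sG^0_x$; the goal is to show that $\sT \subseteq \sP$, contradicting the standard fact from the theory of finite reductive groups that a maximal $\resk$-torus of a connected reductive $\resk$-group is $\resk$-anisotropic modulo its centre if and only if it lies in no proper $\resk$-parabolic subgroup.

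The first step is to show that the image $\tilde\sT^0$ of $\tilde Z^0_0$ in $\sG^0_x$ is contained in $\sH$. The inclusion $\tilde Z^0_0 \subseteq G^0_{x,0} \subseteq J_0$ follows from the defining hypothesis $\tilde Z^0_b \subseteq G^0_x$ upon intersecting with the kernel of the Kottwitz homomorphism. The inclusion $\tilde Z^0_0 \subseteq G_{g^{-1}y}$ is exactly the hypothesis $g^{-1}y \in \buil(G)^{\tilde Z^0_0}$; in the more restrictive case $g^{-1}y \in \buil(\tilde G^0)$, it follows because $\tilde Z^0$ is a central torus of $\tilde G^0$, so $\tilde Z^0_0$ fixes $\buil(\tilde G^0)$ pointwise, and this fixing property transfers via the $\tilde G^0$-equivariant embedding $\buil(\tilde G^0) \hookrightarrow \buil(G)$.

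The second step is to promote the inclusion $\tilde\sT^0 \subseteq \sH \subseteq \sP$ to $\sT \subseteq \sP$. Since $Z^0$ is central in $\bG^0$, the image of $Z^0_0$ in $\sG^0_x$ is a connected subtorus of $Z(\sG^0_x)^\circ$; and since $T = \langle Z^0, \tilde Z^0\rangle$, the $\resk$-torus $\sT = T_0/T_{0+}$ is generated as a $\resk$-algebraic group by $\tilde\sT^0$ together with this central subtorus. Combining with the observation that every parabolic subgroup of $\sG^0_x$ contains $Z(\sG^0_x)$, one obtains $\sT \subseteq Z(\sG^0_x) \cdot \tilde\sT^0 \subseteq \sP$, yielding the desired contradiction. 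The main technical point is to pass the algebraic-group identity $\sT = Z(\sG^0_x)^\circ \cdot \tilde\sT^0$ correctly to the $\resk$-points level, which is handled either by Lang's theorem applied to the (connected) kernel of the multiplication map $Z(\sG^0_x)^\circ \times \tilde\sT^0 \to \sT$, or more directly by passing to the semisimple quotient $\sG^0_x/Z(\sG^0_x)^\circ$, in which the image of $\tilde\sT^0$ coincides with that of the maximal anisotropic torus $\sT$.
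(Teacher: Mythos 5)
Your proposal is correct and follows essentially the same route as the paper's own proof: place the image of $\tilde{Z}^0_0$ inside $\sH$, observe that the image of the central $Z^0_0$ lies in every parabolic subgroup, conclude that any parabolic containing $\sH$ must contain the minisotropic maximal torus $\sT$, and invoke the fact that such a torus lies in no proper parabolic. The only difference is cosmetic — you phrase it as a contraposition and spell out the Kottwitz-kernel step and the generation of $\sT$ at the level of $\resk$-points, both of which the paper leaves implicit with the phrase ``by construction''.
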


\begin{proof}
Since $\buil(\tilde{G}^0)\subseteq \buil(G)^{\tilde{Z}_0^0}$, any 
such $g^{-1}y$ is fixed by $\tilde{Z}_0^0$.  By construction we also have $\tilde{Z}_0^0\subset G_{x,0}^0$. Thus $\tilde{Z}_{0}^0\subset G_{g^{-1}y}\cap G_{x,0}^0$ and hence $\tilde{Z}^0_{0:0+} \subset \sH$.  If $\mathsf{P}$ is a parabolic subgroup of $\sG_x^0$ 
containing $\sH$, then it further contains $Z^0_{0:0+}$, hence the minisotropic maximal torus $\mathsf{T}$.  We conclude that $\mathsf{P}=\sG_x^0$.
\end{proof}

By Proposition~\ref{prop:non-cusp-hypothesis}, for each such choice of $\tilde{Z}^0$, we have $\buil(\tilde{G}^0) \subset \Delta=\proj_{\buil(G^0)}^{-1}\tilde{[x]}$.  Moreover, for any point $g^{-1}y$ satisfying the hypothesis of Proposition~\ref{prop:twisted-levis}, Theorem \ref{thm:non-cusp} does not apply.  

We illustrate the regions of $\buil(G)$ introduced in Propositions~\ref{prop:non-cusp-hypothesis} and \ref{prop:twisted-levis} with an explicit example.

\begin{example}
Let $G=\mathbf{Sp}_4(F)$ and let $y$ be a vertex of $\buil(G)$.  Choose a twisted Levi sequence of $(G^0,G)$ of length two, where $G^0 \cong Z^0 \times \mathbf{SL}_2(F)$ with $Z^0 = \mathbf{Z}^0(F)$ an unramified $F$-anisotropic torus of rank $1$.  Suppose the vertex $x\in \buil(G^0)$ maps to a hyperspecial vertex of $\buil(G)$.
The rest of the datum is fully specified by a choice of character $\chi$ of $Z^0$ of depth $r$, and a cuspidal representation $\sigma$ of $\mathbf{SL}_2(\resk)$.  Let $\mathfrak{s}$ be the inertial class of the corresponding irreducible supercuspidal representation of $G$.  Note that here, $J = G^0_xG_{x,s_0}=J_0$ and $\buil(G)^{J}=\{x\}$.

We first consider the link of $x$ in $\buil(G)$, which is the closure $\Omega(x,1)$ of all chambers adjacent to $x$.  As all geodesics $[x,g^{-1}y]$ meet $\Omega(x,1)$, the goal is to classify the points in $\Omega(x,1)$ to which Theorem~\ref{thm:non-cusp} applies.

Let $\apart$ be an apartment containing $x$.  Set $\Omega_\apart = \Omega(x,1)\cap \apart$ and $\Omega_\apart^0=\Omega_\apart\cap \buil(G^0)$.  Then $\Omega_\apart^0$ contains $x$, as well as up to two chambers of $\buil(G^0)$.  We illustrate some possible configurations in Figure~\ref{figtocome}.

\begin{figure}[ht]
\begin{tikzpicture}[label/.style args={#1#2}{%
postaction={decorate,
decoration={markings,mark=at position #1 with \node #2;}}}]
\draw[step = 2,gray,opacity = 0.8] (-2,-2) grid (2,2);
\draw[gray,opacity=0.8] (-2,-2) -- (2,2);
\draw[gray,opacity=0.8] (-2,2) -- (2,-2);
\draw[ultra thick,blue] (0,-2) -- (0,0);
\node[circle,fill=blue] at (0,0) {};
\draw[gray,opacity=0.5] (-2,-2) -- (-0.4,3.6) -- (2,2);
\draw[gray,opacity=0.5] (-1.25,0.7) -- (0,0) -- (0.8,2.8);
\draw[gray,opacity=0.5] (0,0) -- (-0.4,3.6);
\draw[ultra thick,blue,opacity=0.5] (0,0) -- (0.8,2.8);

\draw[fill=yellow, opacity = 0.2] (0,0) -- (2,0) -- (2,2) -- (0,2) -- cycle;
\draw[fill=yellow, opacity = 0.1] (0,0) -- (-1.25,0.7) -- (-0.4,3.6) -- (2,2) -- cycle;

\draw[fill=blue, opacity = 0.2] (-2,0) -- (2,0) -- (2,-2) -- (-2,-2) -- cycle;
\draw[fill=blue, opacity = 0.1] (-2,-2) -- (-1.25,0.7) -- (0,0) -- cycle;

\draw[ultra thick,magenta,dashed] (0,0) -- (2,0);
\draw[thick,magenta,dashed, opacity=0.5] (0,0) -- (-1.25,0.7);
\draw[thick,magenta,dashed] (0,0) -- (-2,2);
\end{tikzpicture}
\hspace{1cm}
\begin{tikzpicture}[label/.style args={#1#2}{%
postaction={decorate,
decoration={markings,mark=at position #1 with \node #2;}}}]
\draw[step = 2,gray,opacity = 0.8] (-2,-2) grid (2,2);
\draw[gray,opacity=0.8] (-2,-2) -- (2,2);
\draw[gray,opacity=0.8] (-2,2) -- (2,-2);
\node[circle,fill=blue] at (0,0) {};
\draw[green] (-2,0) -- (0,0) -- cycle;
\draw[fill=yellow, opacity = 0.2] (0,0) -- (2,0) -- (2,2) -- (0,2) -- cycle;
\draw[fill=blue, opacity = 0.2] (0,0) -- (2,0) -- (2,-2) -- (0,-2) -- cycle;
\draw[magenta,thick,dashed] (-2,2) -- (0,0);
\draw[magenta,thick,dashed] (2,0) -- (0,0);
\draw[magenta,thick,dashed] (-2,-2) -- (0,0);
\end{tikzpicture}

\caption{\label{figtocome} Neighbourhoods of $x$ in three apartments.  The dark blue line represents their intersection with $\buil(G^0)$.  Chambers of $\Delta$ are white and for all complementary twisted Levis $\tilde{G}^0$, the intersection of these apartments with $\buil(\tilde{G}^0)=\buil(G)^{\tilde{Z}^0_0}$  are indicated by magenta dashed lines.    The points whose projection onto $\buil(G^0)$ lie in some chamber of $\buil(G^0)$ are indicated by colours: blue, yellow, and green, corresponding to distinct chambers of $\buil(G^0)$ (not all shown).}
\end{figure}

In the simplest case, $\Omega_\apart^0$ contains two chambers of $\buil(G^0)$, and then the restriction to $\Omega_\apart$ of $\proj_{\buil(G^0)}$
coincides with the orthogonal projection map. Thus Proposition~\ref{prop:non-cusp-hypothesis} applies to all points save those on the orthgonal complement of $\Omega_\apart^0$ through $x$, which is $\Delta \cap \apart$.  This set coincides with $\Omega_\apart \cap \buil(\tilde{G}^0)$, where $\tilde{G}^0=\mathbf{SL}_2(F)\times Z^0$ is a complementary twisted Levi.  

Suppose $[x,g^{-1}y]$ meets a vertex $z$ of $\Delta$ adjacent to $x$; then $G_{g^{-1}y}\cap J \subseteq G_z \cap J \subseteq Z^0_{0+}\times \mathbf{SL}_2(\R)$.  Thus by multiplying $\chi$ by a depth-zero character of $Z^0$ we obtain the datum of a new supercuspidal representation containing ${}^g\tau(y,g)$, whence this latter cannot contain an $\mathfrak{s}$-type.  We conclude that if $g^{-1}y \in \apart$ then $\tau(y,g)$ contains an $\mathfrak{s}$-type if and only if $g^{-1}y=x$.

When $\Omega_\apart^0$ contains only one chamber $\facet$ of $\buil(G^0)$ the geometry becomes more complex.  Let $H$ denote the part of $\Omega_\apart$ which is the union of lines orthogonal to $\facet$; since these are geodesics, we conclude they represent the projection onto $\buil(G^0)$.  However, if the nearest point in $\Omega_\apart^0$ to some $z$
is $x$, then it need not follow that $x=\proj_{\buil(G^0)}(z)$ --- it may be that there is a closer point in $\buil(G^0)$ to $z$ which lies in a different apartment.  

We illustrate this phenomenon in the first figure of  Figure~\ref{figtocome}, where the two overlapping apartments depicted differ by conjugation by an element of the appropriate root subgroup.  One sees that $\Delta$ contains chambers of $\buil(G)$,  in addition to rays corresponding to buildings of twisted Levi subgroups as in Proposition~\ref{prop:twisted-levis}.

Finally, when $\Omega_\apart^{0}=\{x\}$, no aspect of the map $\proj_{\buil(G^0)}$ can be inferred from $\apart$.  As illustrated in the second figure of Figure~\ref{figtocome}, some chambers of $\Omega_\apart$ will lie in $\Delta$, but not all.  

In fact, in both of the examples illustrated in Figure~\ref{figtocome}, it can be shown that all chambers of $\Omega_\apart$ satisfy the hypotheses of Theorem~\ref{thm:non-cusp}, with those $z$ lying in chambers of $\Delta$  yielding particularly small subgroups $\sH$.  The only points excluded from the application of Theorem~\ref{thm:non-cusp} are those of $\buil(\tilde{G}^0)$, for various complementary twisted Levi subgroups $\tilde{G}^0$.
\end{example} 

The subject of the next section is to provide a tool (Theorem~\ref{thm:seville}) which may be applied to the Mackey components corresponding to points in $\buil(\tilde{G}^0)$, or more generally in $\Delta$.  For example, when $r>2$, then it can be shown that Theorem~\ref{thm:seville} applies to  all $z \in (\Omega(x,2)\setminus \Omega(x,1) )\cap \apart \cap \buil(\tilde{G}^0)$, where $\Omega(x,2)$ is the link of $\Omega(x,1)$, and $\apart$ is one of the apartments depicted in the example above.  In such cases we can conclude that for all $g$ for which $z\in [x,g^{-1}y]$,  $\tau(y,g)$ cannot contain an $\mathfrak{s}$-type.

\section{Points fixed by subgroups of $J$ and intertwining simple characters}\label{sec:seville}

\newcommand{\spt}{u}

We continue to work with the cuspidal datum $\Sigma= (\vec{\bG},x,\sigma,\vec{r},\vec{\phi})$ fixed in Section \ref{sec:mackey}. In particular, recall that the subgroup $H_+ = G^0_{x,0+}G^1_{x,s_0+}\cdots G^d_{x,s_{d-1}+}$ of $J=G^0_xG^1_{x,s_0}\cdots G^d_{x,s_{d-1}}$ carries the simple character $\theta_\Sigma$ obtained from $\vec{\phi^i}$ by appropriate inflation and restriction, as in \eqref{eq:def-theta}.

In this section, we establish another condition on $g^{-1}y$ which suffices to show that the Mackey component $\tau(y,g)$ does not contain any $[G,\pi]_G$-typical representations; this method is particularly targeted at points lying in the subset $\Delta$ constructed in the previous section.

\begin{lemma}\label{lem:cJ-normalizes-Ht}
The subgroup $H_+$ of $J$ is normal; hence for each $t>0$ and $1\leq i \leq d$, $J^i$ normalizes $H_t = H_+\cap G_{x,t}$.
\end{lemma}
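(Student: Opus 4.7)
The plan is to first reduce the lemma to the normality of $H_+$ in $J$, and then establish that normality directly from the explicit factorizations of the two groups combined with standard Moy--Prasad commutator relations.

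For the reduction, assume $H_+ \trianglelefteq J$. Fix $t > 0$ and $1 \leq i \leq d$. Then $J^i \subseteq J$ normalizes $H_+$, while $J^i \subseteq G_x$ normalizes the Moy--Prasad subgroup $G_{x,t}$, which is normal in $G_x$. Consequently $J^i$ normalizes the intersection $H_t = H_+ \cap G_{x,t}$, as claimed.

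To prove $H_+ \trianglelefteq J$, I would work with the factorizations
\[
J \;=\; G^0_x \cdot G^1_{x,s_0}\cdots G^d_{x,s_{d-1}},
\qquad
H_+ \;=\; G^0_{x,0+}\cdot G^1_{x,s_0+}\cdots G^d_{x,s_{d-1}+},
\]
and verify that each factor of $J$ preserves each factor of $H_+$ modulo corrections already lying in $H_+$. Let $G^i_{x,s_{i-1}}$ be a factor of $J$ (with the $i=0$ factor $G^0_x$ handled by setting $s_{-1}=0$, so the corresponding factor of $H_+$ is $G^0_{x,0+}$), and let $G^j_{x,s_{j-1}+}$ be a factor of $H_+$. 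Two cases arise:
\begin{itemize}
\item If $j \geq i$, then the twisted-Levi inclusion $G^i \subseteq G^j$ places $G^i_{x,s_{i-1}}$ inside $G^j_x$, and $G^j_{x,s_{j-1}+}$ is normal in $G^j_x$; so conjugation preserves it outright.
\item If $j < i$, then $G^j_{x,s_{j-1}+} = G^j \cap G^i_{x,s_{j-1}+} \subseteq G^i_{x,s_{j-1}+}$ by the compatibility of Moy--Prasad filtrations under generic embeddings of twisted Levi subgroups, and the standard commutator estimate from \cite[\S 2]{Yu2001} gives
\[
[G^i_{x,s_{i-1}},\, G^j_{x,s_{j-1}+}] \;\subseteq\; G^i_{x,\, s_{i-1}+s_{j-1}+} \;\subseteq\; G^i_{x,s_{i-1}+} \;\subseteq\; H_+.
\]
Writing $aba^{-1} = b \cdot [b^{-1},a]$ then shows that the conjugate lies in $G^j_{x,s_{j-1}+}\cdot H_+ \subseteq H_+$.
\end{itemize}

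Iterating these pairwise estimates through the product defining an arbitrary $j \in J$ then yields $jH_+j^{-1} \subseteq H_+$. The main technical obstacle is the bookkeeping needed to confirm that the cascade of commutator corrections produced at each stage can be absorbed into $H_+$ without disturbing the factors already processed; this is the same sort of calculation that underlies Yu's construction of the semisimple character $(H_+,\theta_\Sigma)$, so we expect it to go through cleanly, but it must be organized carefully with respect to the chosen ordering of the factorizations.
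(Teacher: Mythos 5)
Your proposal is correct and follows essentially the same route as the paper: reduce to normality of $H_+$ in $J$, split the conjugation into pairs of factors $G^i_{x,s_{i-1}}$ from $J$ and $G^j_{x,s_{j-1}+}$ from $H_+$, dispose of $j\geq i$ by normality of Moy--Prasad subgroups in $G^j_x$, and handle $j<i$ via the commutator estimate $[G^i_{x,r},G^i_{x,s}]\subseteq G^i_{x,r+s}$ after embedding $G^j_{x,s_{j-1}+}$ into $G^i_{x,s_{j-1}+}$. The ``cascade'' concern you flag at the end is not really an obstacle, since $H_+$ is already a group and one need only check each factor of $J$ on each factor of $H_+$; the paper passes over this point in exactly the same way.
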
 

\begin{proof}
Since $J^i =J\cap G^i \subseteq G^k_{x}$ for all $k\geq i$ and $G^i_{x,t}\lhd G^i_x$ for all $t, t+ \geq 0$, to show the first statement we have only to consider $g \in G^i_{x,s_{i-1}}$ and $h\in G^k_{x,s_{k-1}+}$ for some pair $k< i$. 
Recall that a property of the Moy-Prasad filtration is that $[G^k_{x,r},G^k_{x,s}]\subseteq G^k_{x,r+s}$ for all $r,s\in \mathbb{R}\cup \mathbb{R}+$, $r,s\geq 0$; see for example \cite[\S2]{HakimMurnaghan2008}.
Therefore since $G^k \subseteq G^i$ and $s_{k-1}+>0$,  $[h,g]\subseteq G^{i}_{x,s_{i-1}+}$, whence $g^{-1}hg \in hG^{i}_{x,s_{i-1}+} \subseteq H_+$.   
 Since $J^i\subseteq J \subseteq G_x$ and for any $t>0$ and $G_{x,t}$ is normal in $G_x$, the lemma follows.
\end{proof}

Recall that $Z^i$ denotes the center of $G^i$.  For each $t>0$ let $i$ be the largest index such that $s_{i-1}<t$, so that $H_t = G^i_{x,t}G^{i+1}_{x,s_i+}\cdots G_{x,s_{d-1}+}$.
Let
\[\Theta_t=\buil(G)^{H_{t+}}\backslash\buil(G)^{Z_t^i}
\]
be the set of points of $\buil(G)$ fixed by $H_{t+}$ but not by the subgroup $Z^i_t$ of $H_t$.  This is empty if $t>s_{d-1}$, since then $i=d$ and $Z_t$ fixes $\buil(G)$ pointwise.  Let $$\Theta=\bigcup_{0<t\leq s_{d-1}}\Theta_t.$$

\begin{theorem}\label{thm:seville}
Suppose that $g\in G$ is such that $\Theta\cap[x,g^{-1}y]\neq\emptyset$. Then no irreducible subrepresentation of the Mackey component $\tau(y,g)$ may be a $[G,\pi]_G$-type.
\end{theorem}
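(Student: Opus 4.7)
The strategy, generalizing that of \cite{Paskunas2005} and \cite{LathamNevins2018}, is to suppose that some irreducible component $\tau\subseteq\tau(y,g)$ is a $[G,\pi]_G$-type, and then exhibit a supercuspidal representation $\pi'$ in a strictly different inertial class whose restriction to $G_{g^{-1}y}$ also contains $\tau$. The representation $\pi'$ arises from a carefully chosen perturbation of the datum $\Sigma$ whose effect on $\theta_\Sigma$ is invisible on $H_+\cap G_{g^{-1}y}$.

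Begin by unpacking the hypothesis. Fix $z\in\Theta_t\cap[x,g^{-1}y]$ with $i$ the largest index such that $s_{i-1}<t$, so that $H_{t+}\subseteq G_z$ but $Z^i_t\not\subseteq G_z$. Since $Z^i_{t+}\subseteq H_{t+}\subseteq G_z$, the subgroup $(Z^i_t\cap G_z)\cdot Z^i_{t+}$ is a proper subgroup of $Z^i_t$. One may therefore choose a character $\chi$ of $Z^i$ of depth exactly $t$ which is trivial on $Z^i_t\cap G_z$, and then extend $\chi$ through the abelianization to a character $\tilde\chi$ of $G^i$. Because $z\in[x,g^{-1}y]$ forces $J\cap G_{g^{-1}y}\subseteq G_z$, the character $\tilde\chi$ is trivial on $Z^i\cap H_+\cap G_{g^{-1}y}$, and hence, via the product decomposition of $H_+$, the twisted character $\theta_\Sigma\cdot\tilde\chi$ agrees with $\theta_\Sigma$ on all of $H_+\cap G_{g^{-1}y}$.

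Use $\tilde\chi$ to construct a new (generalized) cuspidal datum $\Sigma'$ by either modifying a suitable $\phi^j$ with $\tilde\chi$, or by inserting a new stratum at depth $t$, exploiting that generalized twisted Levi sequences permit repeated entries (Section~\ref{sec:kimyu}). The associated simple character $\theta_{\Sigma'}$ coincides with $\theta_\Sigma\cdot\tilde\chi$ on $H_+$. Tracking the Heisenberg--Weil lifts in the spirit of Proposition~\ref{prop:isotypic-restriction} shows that the restriction of $\lambda_{\Sigma'}$ to $J'\cap G_{g^{-1}y}$ and the restriction of $\lambda_\Sigma$ to $J\cap G_{g^{-1}y}$ share at least one common irreducible component. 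Consequently every irreducible $\tau\subseteq\tau(y,g)$ also appears in the Mackey decomposition of $\cInd_{J'}^G\lambda_{\Sigma'}$ restricted to $G_{g^{-1}y}$. Since $\tilde\chi$ is a ramified character of positive depth and is not the restriction of an unramified character of $G$, the Hakim--Murnaghan equivalence criterion \cite{HakimMurnaghan2008} governing inertial equivalence of Yu data is violated, forcing $[G,\pi']_G\neq[G,\pi]_G$ and contradicting the supposition that $\tau$ is a $[G,\pi]_G$-type.

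The main obstacle is the second half of the final paragraph: packaging $\tilde\chi$ into a genuine (possibly generalized) cuspidal datum whose depth inequalities and $G^{j+1}$-genericity conditions are preserved, and then faithfully propagating the agreement of simple characters through the Heisenberg--Weil construction so as to obtain an actual common irreducible component of $\lambda_\Sigma$ and $\lambda_{\Sigma'}$ on their intersection with $G_{g^{-1}y}$. Once this local-to-global passage from the depth-$t$ central character $\chi$ to the full types is secured, the contradiction is immediate.
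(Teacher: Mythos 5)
Your proposal chooses the \emph{constructive} variant of the perturbation argument: build a second genuine cuspidal datum $\Sigma'$ giving a supercuspidal $\pi'$ in a different inertial class that also contains $\tau$. This is exactly the strategy that fails in general, and the paper explicitly says so in the remark following Theorem~\ref{thm:seville}: ``In general, the character $\mu$ arising during the proof need not extend to a character of $G^i$.'' The gap is in the step where you ``extend $\chi$ through the abelianization to a character $\tilde\chi$ of $G^i$.'' The image of $Z^i$ in $G^i/[G^i,G^i]$ is typically a proper subgroup, and even when a set-theoretic extension exists there is no guarantee that one can find $\tilde\chi$ simultaneously (a) trivial on $Z^i\cap H_+\cap G_{g^{-1}y}$, (b) of depth exactly $t$, and (c) such that $\tilde\chi\phi^i$ (or the new stratum you insert) remains $G^{i+1}$-generic; you flag (c) yourself as the ``main obstacle'' without resolving it. Since the existence of $\Sigma'$ is what drives your contradiction, the argument is incomplete at exactly the point where the paper acknowledges there is a real obstruction.

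The paper's actual proof avoids constructing any new datum. It builds only a character $\mu$ of the finite abelian quotient $H_t/H_{t+}$, trivial on $(H_t\cap G_u)H_{t+}$ but nontrivial on a chosen $\bz\in Z^i_t\setminus G_u$, and sets $\theta'=\theta_t\mu$ on the single subgroup $H_t$. Nothing forces $\theta'$ to be the simple character of any datum, and this is noted in the introduction (``does not necessarily produce the simple character of a new representation''). The contradiction proceeds differently: if $\tau$ were a $[G,\pi]_G$-type, then because $\theta_t$ and $\theta'$ agree on $G_{g^{-1}y}\cap H_t$, both characters would occur in $\Res^G_{H_t}\pi$, and so some $\cosrepa\in G$ would intertwine $\theta_t$ with $\theta'$. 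A descent along the twisted Levi tower, pushing $\cosrepa$ from $G^{j+1}$ to $G^j$ using \cite[Thm 9.4]{Yu2001}, the normality of $H_t$ in $J$ (Lemma~\ref{lem:cJ-normalizes-Ht}), and the accompanying replacement of $\theta'$ by $\mu\otimes\lambda|_{J_t}$, brings $\cosrepa$ into $G^i$. But $Z^i$ is central in $G^i$, so a $G^i$-intertwiner would force $\theta_t=\theta'$ on $Z^i_t$, contradicting the choice of $\mu$. To make your approach rigorous you would need to solve the extension and genericity problems, or else abandon the constructive route and run the intertwining argument the paper uses.
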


We offer a geometric interpretation of this hypothesis in Section~\ref{sec:unicity}.

\begin{proof}
Given $g\in G$ such that $\Theta\cap[x,g^{-1}y]\neq\emptyset$, there exists  $0<t\leq s_{d-1}$ such that we may choose 
$\spt \in\Theta_t\cap[x,g^{-1}y]$.  Let $0\leq i < d$ be maximal with respect to the property that $s_{i-1}<t$.  

Since $\spt\in[x,g^{-1}y]$, we have $J\cap G_{g^{-1}y}\subset J\cap G_\spt$ and $H_t\cap G_{g^{-1}y}\subset H_t\cap G_\spt$. Since $\spt\in\buil(G)^{H_{t+}}\backslash\buil(G)^{Z_t^i}$, we have $H_{t+}\subset G_\spt$ but $Z_t^iH_{t+}\not\subset G_\spt$. Therefore we may choose an element $\bz\in Z_t^i$ such that $\bz H_{t+}\not\subset G_\spt$. Since $t\in(s_{i-1},s_i]$, the quotient $H_t/H_{t+}$ is isomorphic to the abelian group $G_{x,t:t+}^i$.

Let $\mu$ be a character of $H_t/H_{t+}$ which is trivial on $(H_t\cap G_\spt)H_{t+}$ but non-trivial on $\bz H_{t+}$. Inflate $\mu$ to a character of $H_t$. Write $\theta_t=\theta_\Sigma|_{H_t}$, and set $\theta'=\theta_t\mu$. Then $\theta_t$ and $\theta'$ are characters of $H_t$ with the following properties:
\begin{enumerate}[(i)]
\item $\theta_t=\theta'$ on $H_t\cap G_\spt\supset H_t\cap G_{g^{-1}y}$;
\item $\theta_t=\theta'$ on $H_{t+}$; and
\item $\theta_t(\bz)\neq\theta'(\bz)$.
\end{enumerate}
Now suppose that $\tau$ is an irreducible subrepresentation of the Mackey component $\tau(y,g)$, so that by Frobenius reciprocity we have 
\[0\neq\Hom_{G_{g^{-1}y}}(\tau,\Ind_{J\cap G_{g^{-1}y}}^{G_{g^{-1}y}}\lambda)=\Hom_{G_{g^{-1}y}\cap J}(\tau,\lambda).
\]
It follows that $\tau$ and $\lambda$ intertwine on the smaller subgroup $G_{g^{-1}y}\cap H_t$, where $\lambda|_{H_t}$ is $\theta_t$-isotypic. Applying Frobenius reciprocity again, we have
\begin{equation}\label{E:xitheta}
0\neq\Hom_{G_{g^{-1}y}\cap H_t}(\tau,\theta_t)=\Hom_{H_t}(\Ind_{G_{g^{-1}y}\cap H_t}^{H_t} \tau,\theta_t).
\end{equation}
By (i), we can replace $\theta_t$ by $\theta'$ on $G_{g^{-1}y}\cap H_t$, so that \eqref{E:xitheta} is equivalent to the statement
\begin{equation}\label{E:xithetap}
0\neq\Hom_{G_{g^{-1}y}\cap H_t}(\tau,\theta')=\Hom_{H_t}(\Ind_{G_{g^{-1}y}\cap H_t}^{H_t} \tau,\theta').
\end{equation}
Now, and for the remainder of the proof, suppose to the contrary that $(G_{g^{-1}y},\tau)$ is a $[G,\pi]_G$-type.  Then by \cite[Proposition 5.2]{BushnellKutzko1998}, any irreducible subquotient of $\pi_\tau:=\cInd_{G_{g^{-1}y}}^G \tau$ is isomorphic to the twist of $\pi$ by some unramified character of $G$.

We recognize the representation $\Ind_{G_{g^{-1}y}\cap H_t}^{H_t} \tau$ appearing in \eqref{E:xitheta} and \eqref{E:xithetap} as a Mackey component of $\Res_{H_t}^G \pi_\tau$, which is by the preceding a direct sum of copies of $\Res_{H_t}^G \pi$. It therefore follows that the characters $\theta_t$ and $\theta'$ both occur in $\Res_{H_t}^G \pi$. We will show that this cannot be the case.

Given that 
\[\Res_{H_t}^G\ \pi=\bigoplus_{\cosrepa:H_t\backslash G/J}\Ind_{H_t\cap{}^\cosrepa J}^{H_t}\ ^\cosrepa\lambda,
\]
we may choose $\cosrepa\in G$ such that the corresponding component contains $\theta'$.  We then have
\[0\neq\Hom_{H_t}(\theta',\Ind_{H_t\cap{}^\cosrepa J}^{H_t}\ ^\cosrepa\lambda)=\Hom_{H_t\cap{}^\cosrepa J}(\theta',{}^\cosrepa\lambda).
\]
Therefore, on the smaller subgroup $H_t\cap{}^\cosrepa H_t$, where $^\cosrepa\lambda$ is $^\cosrepa\theta_t$-isotypic, we have 
\begin{equation}\label{E:thpth}
\Hom_{H_t\cap{}^\cosrepa H_t}(\theta',{}^\cosrepa\theta_t)\neq 0.
\end{equation}
In other words, $\cosrepa\in G$ intertwines the two characters $\theta_t$ and $\theta'$ on $H_t$.

To complete the proof, we need to show that we may choose $\cosrepa \in G^i$.  To do so, we carry out a variant of an inductive argument appearing in \cite{Hakim2018} and \cite{Yu2001}, which is central to the proof that $(J,\lambda)$ is a $[G,\pi]_G$-type.

Suppose that $i\leq j<d$ and $\cosrepa\in G^{j+1}$ intertwines $\theta_t$ and $\theta'$ on $H_t$. That is, $\cosrepa$ satisfies
\[\Hom_{H_t\cap{}^\cosrepa H_t}(\theta',{}^\cosrepa\theta_t)\neq 0.
\]
Since $\theta_t=\theta'$ on $H_{t+}$, when we restrict to $H_{t+}\cap {}^\cosrepa H_{t+}$ we can replace $\theta'$ by $\theta_t$ to conclude that
\[\Hom_{H_{t+}\cap{}^\cosrepa H_{t+}}(\theta_t,{}^\cosrepa\theta_t)\neq 0.
\]
This intertwining implies that
\[[\cosrepa^{-1},H_{t+}]\cap H_{t+}\subset\ker\theta_t\subset\ker\theta.
\]
Recall that $H_+$ contains the subgroups $\cJ^{\ell}$ of \eqref{eq:def-script-J}.  Since $t\leq s_i\leq s_j$, we have that $\cJ_+^{j+1}\subset H_{s_j+}\subset H_{t+}$, whence
\[[a^{-1},\cJ_+^{j+1}]\cap\cJ_+^{j+1}\subset\ker\theta.
\]
Since $\cosrepa\in G^{j+1}$ and $\cJ_+^{j+1}\subset G^{j+1}$, this is a subgroup of $G^{j+1}_{\der}\cap\cJ_+^{j+1}$. By \cite[Lemma 3.9.1(5)]{Hakim2018}, the character $\theta$ coincides with the character $\hat{\phi}_j$ on this intersection.  Consequently $\hat{\phi}_j$ is also trivial on $[a^{-1},\cJ_+^{j+1}]\cap\cJ_+^{j+1}$, which implies $\cosrepa$ intertwines $\hat{\phi}_j$, that is,
\[\Hom_{\cJ_+^{j+1}\cap{}^\cosrepa\cJ_+^{j+1}}(\hat{\phi}_j,{}^\cosrepa\hat{\phi}_j)\neq 0.
\]
Applying \cite[Thm 9.4]{Yu2001}, we infer that $\cosrepa\in\cJ^{j+1}G^j\cJ^{j+1}$. Thus there exist $j_1,j_2\in\cJ^{j+1}$ and $\cosrepb\in G^j$ such that $\cosrepa=j_1\cosrepb j_2$. We wish to show that $\cosrepb$ intertwines $\theta'$ and $\theta_t$ on $H_t$, which is to say that $\theta'$ and $^\cosrepb\theta_t$ agree on $H_t\cap{}^\cosrepb H_t$.

Since in general $\cJ^{j+1}\not\subset H_t$, the representation $\lambda$ of $J$ is not $\theta_t$-isotypic on this subgroup.  However, in what follows, for each $h\in H_t$, we may make the identification $\lambda(h)=\theta_t(h)$, with $\theta_t(h)$ being viewed as a scalar action on the space of $\lambda$. 

So let $h\in H_t\cap{}^\cosrepb H_t$. Noting that $\cosrepb^{-1}h\cosrepb\in H_t$ yields
\begin{align*}
^\cosrepb\theta_t(h)&=\theta_t(\cosrepb^{-1}h\cosrepb)=\theta_t(j_2\cosrepa^{-1}j_1hj_1^{-1}\cosrepa j_2^{-1})\\
&=\lambda(j_2\cosrepa^{-1}j_1hj_1^{-1}\cosrepa j_2^{-1})\\
&=\lambda(j_2)\ {}^\cosrepa\lambda(j_1hj_1^{-1})\lambda(j_2^{-1}).
\end{align*}
Since $j_1\in \cJ^{j+1}\subset J^{j+1}$ normalizes $H_t$ by Lemma \ref{lem:cJ-normalizes-Ht}, we have
\[j_1hj_1^{-1}\in{}^{j_1}(H_t\cap{}^\cosrepb H_t)=H_t\cap{}^{j_1\cosrepb}H_t=H_t\cap{}^\cosrepa H_t,
\]
whence $^\cosrepa\lambda(j_1hj_1^{-1})$ acts by the scalar $^\cosrepa\theta_t(j_1hj_1^{-1})$, which commutes with $\lambda(j_2)$. Since $\theta'$ and $^\cosrepa\theta_t$ agree on $H_t\cap{}^\cosrepa H_t$, we conclude that
\[^\cosrepb\theta_t(h)={}^\cosrepa\theta_t(j_1hj_1^{-1})=\theta'(j_1hj_1^{-1}).
\]
Now recall that $\mu=\theta_t^{-1}\theta'$ is a character of $H_t$ which is trivial on $H_{t+}$. When $t \neq s_i$, we have $J_t/J_{t+}=H_t/H_{t+}$, and so $\mu$ inflates to a character of $J_t$. If $t=s_i$, then $H_{s_i}/H_{s_i+}=G_{x,s_i}^iG_{x,s_i+}^{i+1}/G_{x,s_i+}^{i+1}$, which is a subgroup of the abelian group $J_{s_i}/J_{s_i+}=G_{x,s_i:s_i+}^{i+1}$. Thus we can extend $\mu$ to a character of $J_{s_i}$ which is trivial on $J_{s_i+}$.

Consider now the representation of $J_t$ given by $\lambda'=\mu\otimes\Res_{J_t}^J \lambda$. Upon restriction to $H_t$, this representation is $\mu\theta_t=\theta'$-isotypic. Since $t\leq s_i\leq s_j$, we have that $\cJ^{j+1}\subset J_t$, whence
\[\theta'(j_1hj_1^{-1})=\lambda'(j_1)\theta'(h)\lambda'(j_1^{-1})=\theta'(h).
\]
Therefore we have found an element $\cosrepb\in G^j$ for which $\theta'$ an $^\cosrepb\theta_t$ agree on $H_t\cap{}^\cosrepb H_t$.

Hence by induction, we may assume that we are given an $\cosrepa\in G^i$ such that $\theta'$ and $^\cosrepa\theta_t$ agree on $H_t\cap{}^\cosrepa H_t$.   Since $Z^i$ is central in $G^i$, we therefore have that $Z_t^i \subseteq H_t\cap{}^\cosrepa H_t$ and that the characters $\theta'$ and $\theta_t$ agree on $Z_t^i$, contradicting (iii) above.

Therefore no $\cosrepa\in G$ can intertwine $\theta_t$ and $\theta'$, which implies in turn that $\tau$ cannot be a $[G,\pi]_G$-type.
\end{proof}

\begin{remark}
A natural question is to ask to what extent the above proof by contradiction may be made constructive. That is: can one produce an irreducible representation $\pi'$ of $G$, not isomorphic to an unramified twist of $\pi$, such that $\pi'|_{G_{g^{-1}y}}$ contains $\tau(y,g)$? In general, the character $\mu$ arising during the proof need not extend to a character of $G^i$. However, should $\mu$ extend to $G^i$---for example, when $\mu$ is trivial on $[G^i, G^i]_{x,t}$
---then we may realize $\pi'$ as a supercuspidal representation obtained by replacing the character $\phi^i$ in the datum $\Sigma$ with $\mu\phi^i$.
\end{remark}

\section{Implications towards the unicity of types}\label{sec:unicity}

Recall that, in Conjecture \ref{conj:unicity-2}, we reformulated the unicity of types as the assertion that, if $\tau$ is an irreducible $[G,\pi]_G$-typical representation of a maximal compact subgroup $G_y$, then we must have $\tau$ is a subrepresentation of $^{g}\tau(y,g)$ for some $g\in G$ such that $g^{-1}y\in\buil(G)^J$.

In this section, we describe how our results reduce this conjecture to two explicit questions; indeed, the generality of our results strongly suggests that the unicity of types is inextricably linked to these two problems, each of which appears to be difficult in its own right.

In Section \ref{sec:projection}, we considered the set $\Delta$ consisting of points $z\in\buil(G)$ such that $\proj_{\buil(G^0)}(z)\in \tilde{[x]}$; on all points $g^{-1}y$ in the complement of $\Delta$ in $\buil(G)$,  Theorem~\ref{thm:non-cusp} ensures that $\tau(y,g)$ contains no $[G,\pi]_G$-types. 

Our first expectation is that, under a mild hypothesis on $s_0$, it is possible to apply Theorem \ref{thm:seville} to the points $g^{-1}y$ in $\Delta$ outside of a compact neighbourhood of $[x]$ in $\buil^{\red}(G)$ (which may be strictly larger than $\buil^{\red}(G)^J$).

The reasoning is as follows.  Following the notation of Theorem~\ref{thm:seville}, note that for each $i$, $Z_0^i$ fixes $\buil(G^i)$ as a subset of $\buil(G)$; thus the filtration subgroup $Z^i_t$ fixes a $G^i$-invariant convex neighbourhood of $\buil(G^i)$. Since $Z^i_t\subset G^i_t$, we have
$
H_{t+}\subseteq Z^i_tH_{t+}.
$
Thus 
$$
\buil(G)^{H_{t+}}\supseteq \buil(G)^{Z^i_tH_{t+}}
$$ 
and their images in the reduced building are bounded convex neighbourhoods of $[x]$ since the groups  are compact open and contained in $G_x$.  Note that $\Theta_t$ is exactly their set-theoretic difference.  The set $\Theta_t$ can be empty (for example, whenever $Z^i_t = Z^i_{t+}\subset H_{t+}$) and it never meets $\buil(G^i)$.  One expects that as $t$ varies over an interval of length $1$, the regions $\Theta_t$ cover the radial directions from each of the buildings $\buil(G^i)$, in the sense of meeting the lines $[x,g^{-1}y]$ for $g^{-1}y$ sufficiently far from some $\buil(G^i)$.

In particular, by this reasoning, when $i=0$ and $s_0>1$ we expect $\Theta^0 := \cup_{0<t<s_0}\Theta_t$ to define an annulus about $x$ with axis $\buil(G^0)$, that is, a set with compact image in $\buil^{\red}(G)$ which meets every ray in $\Delta$ emanating from $x$.  In consequence, for every $g$ such that $g^{-1}y \in \Delta$ is not on the interior of this annulus, we would have that $\tau(y,g)$ cannot contain a $[G,\pi]_G$-type.

That said, in general this statement hinges on understanding the growth of the fixed-point sets $\buil(G)^{Z_t^0}$ as  $t>0$ increases.  When $Z^0=G^0$ is a maximal torus, we had reduced the question to the relatively simpler case of $t=0$ in  \cite{LathamNevins2018}.  More generally, we may hope that the methods of \cite[\S 4]{MeyerSolleveld2012}, which give upper bounds on the radius of $\buil(G)^{T_t}$ for a maximal minisotropic torus $T$, could be adapted to the non-maximal case.  

On the other hand, we hope that this statement may be more easily proven true for the special but crucial case that $g^{-1}y$ is contained in the building $\buil(\tilde{G}^0)\subseteq \Delta$ of some complementary twisted Levi as in Definition~\ref{def:Gtilde}; by Proposition~\ref{prop:twisted-levis} these are points to which Theorem~\ref{thm:non-cusp} \emph{cannot} apply.  Recall that for such $g^{-1}y$, we have the additional hypothesis that $\tilde{Z}^0$ is a torus whose maximal parahoric fixes $g^{-1}y$, and that $\langle \tilde{Z}_0^0, Z_0^0\rangle$ generates a maximal torus $\sT$ of $\sG^0_x$.  Consequently the question could be related to one about fixed points of maximal tori.

Finally, if a point $g^{-1}y\in\Delta$ is such that the map $J\cap G_{g^{-1}y}\rightarrow J/J_+$ is surjective, then it should follow that the only way in which  $\tau(y,g)$ can fail to contain a $[G,\pi]_G$-type is in a way detectable by the simple character $\theta$---that is, if the hypotheses of Theorem \ref{thm:seville} are satisfied.

We summarize these latter two expectations in the following conjecture.

\begin{conjecture}\label{conj:seville}
Suppose that $g^{-1}y\in\Delta$ and either:
\begin{enumerate}[(i)]
\item the map $J\cap G_{g^{-1}y}\rightarrow J/J_+$ is surjective; or
\item $g^{-1}y \in \buil(\tilde{G}^0)$ for some complementary twisted Levi subgroup $\tilde{G}^0$ and 
$s_0>1$.
\end{enumerate}
Then there exists a value $t\in(0,s_0)$ such that $[x,g^{-1}y]$ has non-empty intersection with
\[\Theta_t=\buil(G)^{H_{t+}}\backslash\buil(G)^{Z_t^0}.
\]
In particular, Theorem \ref{thm:seville} is applicable to $\tau(y,g)$, implying it does not contain a $[G,\pi]_G$-type.
\end{conjecture}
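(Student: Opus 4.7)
The plan is to locate a point $u$ on the geodesic $[x, g^{-1}y]$ and a depth $t \in (0, s_0)$ such that $H_{t+}$ fixes $u$ while some element $z \in Z^0_t$ does not. First, choose an apartment $\apart = \apart(\bG, \bS, F)$ of $\buil(G)$ containing both $x$ and $g^{-1}y$; in Case~(ii), we may arrange $\bS$ so that $\apart$ also contains the image of $\buil(\tilde G^0)$ through $x$. Parametrise the geodesic by $u_s = x + s v$ for $s \in [0,1]$, where $v = g^{-1}y - x$.

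The first step is to rephrase both fixed-point conditions in terms of affine-root inequalities on $\apart$. Membership $u_s \in \buil(G)^{H_{t+}}$ decomposes into the simultaneous conditions $u_s \in \buil(G)^{G^0_{x,t+}}$ and $u_s \in \buil(G)^{G^{i}_{x,s_{i-1}+}}$ for each $1 \leq i \leq d$, each of which is a finite conjunction of inequalities of the form $\psi(u_s) \geq r$ for appropriate affine roots $\psi$ at $x$. Meanwhile, $Z^0_t$ acts on each root subgroup $U_\alpha$, for $\alpha$ a root of $\bG$ non-trivial on $\bZ^0$ (i.e.\ $\alpha \in \Phi \setminus \Phi^0$), via the character $\alpha$; so the failure of some $z \in Z^0_t$ to fix $u_s$ is controlled by how far $u_s$ has travelled away from $\buil(G^0)$ in the transverse directions, in a manner that balances $t$ against the pairings $\alpha(v)$.

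The critical step is to identify the threshold
\[
t^{*} = \sup\{t > 0 : Z^0_t \text{ fixes every point of } [x, g^{-1}y]\}.
\]
In Case~(ii), the hypothesis that $\sT$ is a maximal minisotropic torus of $\sG^0_x$ ensures that for each $\alpha \in \Phi \setminus \Phi^0$ the restriction $\alpha|_{\bZ^0}$ is non-trivial, so $Z^0_t$ must cease to fix the geodesic once $s$ is large enough; using $s_0 > 1$ and an explicit estimate for $t^*$ in terms of $\|v\|$ and the pairings $\alpha(v)$, one concludes $t^* < s_0$. In Case~(i), the surjectivity hypothesis, combined with the fact that $J \cap G_{g^{-1}y}$ must map onto a maximal parahoric quotient, constrains $g^{-1}y$ to lie close enough to $x$ (in a sense controlled by the Moy--Prasad filtration on $G^0$) that the same inequality $t^* < s_0$ holds unconditionally. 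For any $t \in (t^{*}, s_0)$ sufficiently close to $t^{*}$, pick a point $u \in [x, g^{-1}y]$ and $z \in Z^0_t$ with $zu \neq u$; provided $u$ is chosen close to $x$, the $\vec{s}$-genericity built into $\Sigma$ ensures that the constraints from $G^{i}_{x,s_{i-1}+}$ for $i \geq 1$ remain satisfied, so $u \in \buil(G)^{H_{t+}}$ and therefore $u \in \Theta_t$.

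The main obstacle is precisely this last verification: showing that the witness $u$ can be located simultaneously in $\buil(G)^{H_{t+}}$ and outside $\buil(G)^{Z^0_t}$. This requires a quantitative, uniform control of the shape of the fixed-point set $\buil(G)^{Z^0_t}$ near $\buil(G^0)$---in the spirit of the radius estimates of \cite[\S 4]{MeyerSolleveld2012}, but for a general central subtorus of a twisted Levi rather than a maximal torus. When $Z^0$ is maximal the required estimates reduce to the analysis in \cite{LathamNevins2018}, but in the general case the interplay between the minimum value of $\alpha(v)$ over $\alpha \in \Phi \setminus \Phi^0$ and the Moy--Prasad hyperplanes cutting the apartment appears to demand new input, which is why the statement remains conjectural.
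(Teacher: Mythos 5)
This statement is a \emph{conjecture} in the paper (Conjecture~\ref{conj:seville}), not a proved result, so there is no paper proof to compare against. What Section~\ref{sec:unicity} offers is heuristic motivation, and your sketch tracks it faithfully: rephrase the two fixed-point conditions as bounded convex regions $\buil(G)^{H_{t+}}\supseteq\buil(G)^{Z^0_tH_{t+}}$ around $[x]$, study how $\buil(G)^{Z^0_t}$ grows as $t$ increases, and argue that for some $t\in(0,s_0)$ the geodesic $[x,g^{-1}y]$ has left the inner region while remaining in the outer one. You have also correctly isolated the crux: the missing quantitative control of $\buil(G)^{Z^0_t}$ for a non-maximal central torus $Z^0$, in the style of the Meyer--Solleveld radius bounds for maximal minisotropic tori, is precisely what the paper names as the reason the statement is left as a conjecture. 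On that count your proposal and the paper's discussion agree.

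Two cautions on the details you do fill in. First, the claim that in Case~(ii) one can choose a single apartment containing $x$, $g^{-1}y$, and ``the image of $\buil(\tilde G^0)$ through $x$'' needs more care: the apartments of $\buil(\tilde G^0)$ are indexed by maximal split tori of $\tilde G^0$, which are not maximal in $G$, so an apartment of $\buil(G)$ can at best meet $\buil(\tilde G^0)$ in a lower-dimensional affine slice, and it is not automatic that a single apartment through $x$ and $g^{-1}y$ is compatible with the defining data of $\Delta$. Second, in Case~(i) the inference from ``$J\cap G_{g^{-1}y}\to J/J_+$ is surjective'' to ``$g^{-1}y$ lies close enough to $x$ that $t^*<s_0$ unconditionally'' is not justified; surjectivity onto $J/J_+$ constrains the depth-zero piece of the intersection but says nothing direct about metric proximity, and indeed the paper's own heuristic for Case~(i) is instead that any failure of typicality must then be ``detectable by the simple character,'' which is a weaker and differently flavoured expectation. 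Neither point is fatal to your sketch---you are explicit that it is a proposal and that a genuine estimate is missing---but these are the places where, even granting the Meyer--Solleveld-type input, the argument as written would still have holes.
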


We write $\Delta_\mathrm{B}$ for the set of points in $\Delta$ which satisfy the hypotheses of Conjecture \ref{conj:seville}. Assuming this conjecture, it remains to consider those points $g^{-1}y\in\Delta\backslash\Delta_\mathrm{B}$.  As before, let $\sH$ be the image of $J_0\cap G_{g^{-1}y}$ in $\sG_x^0$.  Our expectation is that $\sH$ will be \emph{sufficiently small}, in a sense analogous to that used by Paskunas for $\mathbf{GL}_n(F)$ in \cite{Paskunas2005}.  

Before expanding on this point, it will be instructive to contrast our approach with that of \cite{Paskunas2005}. For simplicity, let us consider a datum with $s_0>1$ in order to eliminate a small number of exceptional cases. The methods of \emph{loc. cit.} may be interpreted, in the language of this paper, as partitioning $\buil(G)\backslash\buil(G)^J$ into two regions, A and B.
\begin{enumerate}[(A)]
\item For points $g^{-1}y$ in this region, $\sH$ is a proper subgroup of $\sG^0_x$.
\item For points $g^{-1}y$ in this region, $\sH=\sG_x^0$.
\end{enumerate}
For points in Region B, Paskunas makes an argument to conclude that $\tau(y,g)$ cannot contain any $[G,\pi]_G$-typical representations. This argument is a special case of our Theorem \ref{thm:seville}; the simple nature of the tori in $\mathbf{GL}_n(F)$ means that verifying the hypotheses is not difficult.  (Thus Conjecture~\ref{conj:seville} holds for $\mathbf{GL}_n(F)$.)

For points in Region A, Paskunas' approach differs significantly from ours. He shows that $\sH$ is what he calls a sufficiently small subgroup of $\sG^0_x$, which is phrased in terms of the non-maximality of the splitting fields of its elliptic elements.  He uses this to show that for most $g$, the irreducible components of $\tau(y,g)$ arise in inertially inequivalent supercuspidal representations, and hence are not $[G,\pi]_G$-types; when this is not possible, he shows that the components of $\tau(y,g)$ must intertwine with non-cuspidal representations.


In contrast, we show via Theorem \ref{thm:non-cusp} that the large majority of Mackey components from Region A (namely, those from $\buil(G)\setminus \Delta$) are contained in non-cuspidal representations.  Via Conjecture~\ref{conj:seville} we predict that Theorem~\ref{thm:seville} applies in $\Delta_B$, that is, to all components from region B, as well as those components from region A that correspond to points on the building of a complementary twisted Levi.  To address the remaining components, corresponding to points in $\Delta\setminus \Delta_B$, we make the following definition, based on the proof of \cite[Proposition 6.8]{Paskunas2005}.

\begin{definition}
Given $G$ and $x$, a subgroup $A$ of $G^0_{x,0}$, or its image $\sH$ in $\sG^0_x$, is \emph{sufficiently small} if for any cuspidal representation $\sigma$ of $\sG^0_x$, and every irreducible component $\xi$ of $\sigma|_{\sH}$, there exists a cuspidal representation $\sigma'$ of $\sG_x^0$ such that $\sigma'|_{\sH}$ contains $\xi$ and yet $\sigma'\not\simeq{}^\gamma\sigma$, for any $\gamma\in G_{x}^0$.
\end{definition}

\begin{remark}
The content of \cite[Prop. 6.8]{Paskunas2005} is that those subgroups of $\mathbf{GL}_n(\mathbb{F}_q)$ which are sufficiently small in Paskunas' sense are either also sufficiently small in the above sense, or else are  subsumed by our Theorem \ref{thm:non-cusp}.
This proof is a straightforward computation using the Deligne--Lusztig character formula, which relies heavily on special properties of the group $\mathbf{GL}_n(\mathbb{F}_q)$---specifically, it makes use of the fact that every cuspidal representation of $\mathbf{GL}_n(\mathbb{F}_q)$ is a regular Deligne--Lusztig representation, and that any minisotropic torus of $\mathbf{GL}_n(\mathbb{F}_q)$ has splitting field $\mathbb{F}_{q^n}$. Both of these assertions are often false for other finite groups of Lie type.
\end{remark}

\begin{proposition}
Suppose that $G_{g^{-1}y}\cap G^0_{x,0}$ is sufficiently small. Then $\tau(y,g)$ does not contain any $[G,\pi]_G$-typical components.
\end{proposition}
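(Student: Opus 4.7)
The plan is to argue by contradiction, following the strategy of \cite[Proposition~6.8]{Paskunas2005} adapted to the Kim--Yu setting.  Suppose $\tau\subseteq\tau(y,g)$ is an irreducible $[G,\pi]_G$-typical representation.  By the Mackey decomposition \eqref{eq:Mackey-0}, after absorbing an element of $J$ into $g$, one may choose an irreducible component $\sigma_0$ of $\sigma|_{G^0_{x,0}}$, inflated to $\sG_x^0$ and thence to $J_0$, so that $\tau$ occurs in $\tau_0(y,g)=\Ind_{J_0\cap G_{g^{-1}y}}^{G_{g^{-1}y}}\lambda_0$ with $\lambda_0=\sigma_0\otimes\kappa$.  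Frobenius reciprocity then yields $\Hom_{J_0\cap G_{g^{-1}y}}(\tau,\lambda_0)\neq 0$.  Write $K=J_0\cap G_{g^{-1}y}$ for brevity, and let $\sH$ denote its image under the quotient map $J_0\twoheadrightarrow J_0/J_+=\sG_x^0$, which the hypothesis ensures is sufficiently small.

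Since $\sigma_0$ is inflated from $\sG_x^0$, the decomposition $\sigma_0|_{\sH}=\bigoplus_\xi n_\xi\xi$ into irreducibles induces the decomposition
\[
\lambda_0|_K=\bigoplus_\xi n_\xi\bigl(\tilde\xi\otimes\kappa|_K\bigr),
\]
where $\tilde\xi$ is the inflation of $\xi$ along $K\twoheadrightarrow\sH$.  The nontrivial intertwining with $\tau$ must therefore pass through at least one summand, so there is an irreducible component $\xi$ of $\sigma_0|_{\sH}$ with $\Hom_K(\tau,\tilde\xi\otimes\kappa|_K)\neq 0$.  Applying the sufficient smallness of $\sH$, I would obtain a cuspidal representation $\sigma_0'$ of $\sG_x^0$ such that $\xi\hookrightarrow\sigma_0'|_{\sH}$ and $\sigma_0'\not\simeq{}^\gamma\sigma_0$ for every $\gamma\in G_x^0$.

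Next, I would form the modified datum $\Sigma_0'=(\vec{\bG},x,(G^0_{x,0},\sigma_0'),\vec{r},\vec{\phi})$; since only the depth-zero ingredient has changed, $\Sigma_0'$ is again a datum and Lemma~\ref{lem:S-type-connected} yields an $\mathfrak{S}'$-type $(J_0,\lambda_0')$ with $\lambda_0'=\sigma_0'\otimes\kappa$, for some finite set $\mathfrak{S}'$ of inertial classes supported on $G$.  The presence of $\xi$ in $\sigma_0'|_{\sH}$ ensures that $\lambda_0'|_K$ also contains $\tilde\xi\otimes\kappa|_K$ as a subrepresentation, and the same intertwining argument shows $\Hom_K(\tau,\lambda_0')\neq 0$.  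Hence $\tau$ occurs in the Mackey component $\tau_0'(y,g)=\Ind_K^{G_{g^{-1}y}}\lambda_0'$, and so in $\Res_{G_{g^{-1}y}}^G\cInd_{J_0}^G\lambda_0'$; every irreducible subquotient of the latter belongs to some class in $\mathfrak{S}'$, forcing $[G,\pi]_G\in\mathfrak{S}'$.

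The main obstacle is thus to establish that $[G,\pi]_G\notin\mathfrak{S}'$, which reduces to a uniqueness statement for the Kim--Yu construction:  two cuspidal data with identical higher-depth ingredients give inertially equivalent supercuspidals \emph{only} if their depth-zero cuspidal ingredients are $G_x^0$-conjugate representations of $\sG_x^0$.  This assertion should follow from Yu's intertwining analysis~\cite[\S 9, \S 15]{Yu2001} combined with the Clifford theory describing the $G_x^0$-orbit structure of the cuspidal components of $\sigma|_{G^0_{x,0}}$.  Granting it, $\sigma_0'\not\simeq{}^\gamma\sigma_0$ gives $[G,\pi]_G\notin\mathfrak{S}'$, producing the desired contradiction.
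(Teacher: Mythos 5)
Your argument follows exactly the same route as the paper's own proof: decompose $\tau(y,g)$ via \eqref{eq:Mackey-0} to isolate a component $\xi$ of $\sigma_0|_{\sH}$, apply the definition of sufficiently small to produce a cuspidal $\sigma_0'$ with $\xi \hookrightarrow \sigma_0'|_{\sH}$ and $\sigma_0' \not\simeq {}^\gamma\sigma_0$, substitute $\sigma_0'$ into the datum, note that $(J_0,\sigma_0'\otimes\kappa)$ is an $\mathfrak{S}'$-type by Lemma~\ref{lem:S-type-connected}, and conclude from the occurrence of $\tau$ in $\Res_{G_{g^{-1}y}}^G\cInd_{J_0}^G(\sigma_0'\otimes\kappa)$. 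Your bookkeeping of the twist by $h$ (absorbing it into $g$, respectively the paper's use of ${}^h\sigma_0$) is a cosmetic difference.

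The ``main obstacle'' you honestly flag --- that $[G,\pi]_G \notin \mathfrak{S}'$, i.e.\ that non-$G_x^0$-conjugate depth-zero cuspidals $\sigma_0$ and $\sigma_0'$ yield inertially inequivalent supercuspidals when the higher-depth ingredients $\vec{r},\vec{\phi}$ are held fixed --- is precisely the step the paper leaves implicit in its appeal to Lemma~\ref{lem:S-type-connected}. That lemma only asserts the existence of some finite $\mathfrak{S}'$ supported on $G$ and makes no claim about which inertial classes it contains; the exclusion of $[G,\pi]_G$ does follow from the known injectivity of the Kim--Yu parameterization (Hakim--Murnaghan, building on Yu's intertwining computations), but neither you nor the published proof spells this out. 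So your ``gap'' is real, but it is not yours alone: you have located exactly where the paper itself relies on an unstated uniqueness input. If you want a truly self-contained proof, you would need to cite the relevant result of Hakim--Murnaghan on equivalence of cuspidal $G$-data, or reproduce the requisite intertwining argument.
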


\begin{proof}
Recall from \eqref{eq:Mackey-0} that $\tau(y,g)$ is a subrepresentation of
$$
\bigoplus_{h \in (J\cap G_{g^{-1}y})\backslash J / J_0} 
\Ind_{J_0 \cap G_{g^{-1}y}}^{G_{g^{-1}y}} {}^h\lambda_0.
$$
In particular, if $\tau$ is an irreducible component of $\tau(y,g)$, then there exists an irreducible component $\xi$ of ${}^h\sigma_0|_{\mathsf{H}}$, for some $h\in J/J_0 \cong G^0_x/G^0_{x,0}$  such that $\tau$ identifies with a subrepresentation of $\Ind_{J_0\cap G_{g^{-1}y}}^{G_{g^{-1}y}}\ \xi\otimes\kappa$.

Note that the image of $G_{g^{-1}y}\cap G^0_{x,0}$ in $\sG^0_x$ coincides with $\sH$. Our assumption that $\sH$ is sufficiently small means that we may choose a cuspidal representation $\sigma'_0$ of $\sG_x^0$ such that $\sigma'_0\not\simeq{}^\gamma\sigma_0$ for any $\gamma\in G_{x}^0$, and such that $\tau$ also occurs as a subrepresentation of $\Ind_{J_0\cap G_{g^{-1}y}}^{G_{g^{-1}y}}\ \sigma'_0\otimes\kappa$. This latter representation is a (twisted) Mackey component of the representation $\Ind_{J_0}^G\ \sigma'_0\otimes\kappa$ of $G$.  By Lemma \ref{lem:S-type-connected}, it does not contain any $[G,\pi]_G$-typical representations.


\end{proof}

We thus anticipate that the unicity of types reduces to the following question.
  
\begin{question}
Suppose $g^{-1}y \in \Delta\backslash\Delta_\mathrm{B}$.  Is $\sH$ sufficiently small?
\end{question}

This is an aspect of an intriguing question in the representation theory of finite groups of Lie type.  Aside from some groups of type $A_n$, even in the simpler case that $G^0_x=G^0_{x,0}$, a characterization of the sufficiently small subgroups is to our knowledge an open problem, of independent interest.

\bibliographystyle{amsalpha}
\addcontentsline{toc}{chapter}{Bibliography}
\bibliography{padicrefs}

\providecommand{\bysame}{\leavevmode\hbox to3em{\hrulefill}\thinspace}
\providecommand{\MR}{\relax\ifhmode\unskip\space\fi MR }
\providecommand{\MRhref}[2]{%
  \href{http://www.ams.org/mathscinet-getitem?mr=#1}{#2}
}
\providecommand{\href}[2]{#2}
\begin{thebibliography}{BLR90}

\bibitem[Adl98]{Adler1998}
Jeffrey~D. Adler, \emph{Refined anisotropic {$K$}-types and supercuspidal
  representations}, Pacific J. Math. \textbf{185} (1998), no.~1, 1--32.

\bibitem[Ber84]{Bernstein1984}
Joseph Bernstein, \emph{Le ``centre'' de {B}ernstein}, Representations of
  reductive groups over a local field, Travaux en Cours, Hermann, Paris, 1984,
  Edited by P. Deligne, pp.~1--32. \MR{771671}

\bibitem[BH05]{BushnellHenniart2005}
Colin~J. Bushnell and Guy Henniart, \emph{The essentially tame local langlands
  correspondence. i}, J. Amer. Math. Soc. \textbf{18} (2005), no.~3, 685--710.
  \MR{2138141}

\bibitem[BK93]{BushnellKutzko1993}
Colin~J. Bushnell and Philip~C. Kutzko, \emph{The admissible dual of {${\rm
  GL}(N)$} via compact open subgroups}, Annals of Mathematics Studies, vol.
  129, Princeton University Press, Princeton, NJ, 1993.

\bibitem[BK98]{BushnellKutzko1998}
\bysame, \emph{Smooth representations of reductive {$p$}-adic groups: structure
  theory via types}, Proc. London Math. Soc. (3) \textbf{77} (1998), no.~3,
  582--634.

\bibitem[Blo05]{Blondel2005}
Corinne Blondel, \emph{Quelques propri\'{e}t\'{e}s des paires couvrantes},
  Math. Ann. \textbf{331} (2005), no.~2, 243--257. \MR{2115455}

\bibitem[BLR90]{BoschLutkebohmertRaynaud1990}
Siegfried Bosch, Werner L\"{u}tkebohmert, and Michel Raynaud, \emph{N\'{e}ron
  models}, Ergebnisse der Mathematik und ihrer Grenzgebiete (3) [Results in
  Mathematics and Related Areas (3)], vol.~21, Springer-Verlag, Berlin, 1990.
  \MR{1045822}

\bibitem[Bor98]{Borovoi1998}
Mikhail Borovoi, \emph{Abelian {G}alois cohomology of reductive groups}, Mem.
  Amer. Math. Soc. \textbf{132} (1998), no.~626, viii+50. \MR{1401491}

\bibitem[BT84]{BruhatTits1984}
Fran\c{c}ois Bruhat and Jacques Tits, \emph{Groupes r\'eductifs sur un corps
  local. {II}. {S}ch\'emas en groupes. {E}xistence d'une donn\'ee radicielle
  valu\'ee}, Inst. Hautes \'Etudes Sci. Publ. Math. (1984), no.~60, 197--376.

\bibitem[Cas73]{Casselman1973}
William Casselman, \emph{The restriction of a representation of {${\rm
  GL}_{2}(k)$} to {${\rm GL}_{2}({\mathfrak{o}})$}}, Math. Ann. \textbf{206}
  (1973), 311--318.

\bibitem[CN10]{CampbellNevins2010}
Peter~S. Campbell and Monica Nevins, \emph{Branching rules for ramified
  principal series representations of {GL}(3) over a {$p$}-adic field}, Canad.
  J. Math. \textbf{62} (2010), no.~1, 34--51.

\bibitem[Fin15]{Fintzen2015}
Jessica Fintzen, \emph{On the {M}oy-{P}rasad filtration}, Preprint
  \texttt{arxiv.org} arXiv:1511.00726v3 [math.RT], 2015.

\bibitem[Fin18]{Fintzen2018}
\bysame, \emph{Types for tame $p$-adic groups}, Preprint \texttt{arxiv.org}
  arXiv:1810.04198 [math.RT], 2018.

\bibitem[Hak18]{Hakim2018}
Jeffrey Hakim, \emph{Constructing tame supercuspidal representations},
  Represent. Theory \textbf{22} (2018), 45--86. \MR{3817964}

\bibitem[Han87]{Hansen1987}
Kristina Hansen, \emph{Restriction to {${\rm GL}_2({\mathcal{O}})$} of
  supercuspidal representations of {${\rm GL}_2(F)$}}, Pacific J. Math.
  \textbf{130} (1987), no.~2, 327--349.

\bibitem[HM08]{HakimMurnaghan2008}
Jeffrey Hakim and Fiona Murnaghan, \emph{Distinguished tame supercuspidal
  representations}, Int. Math. Res. Pap. IMRP (2008), no.~2, Art. ID rpn005,
  166.

\bibitem[Kim07]{Kim2007}
Ju-Lee Kim, \emph{Supercuspidal representations: an exhaustion theorem}, J.
  Amer. Math. Soc. \textbf{20} (2007), no.~2, 273--320 (electronic).

\bibitem[Kot97]{Kottwitz1997}
Robert~E. Kottwitz, \emph{Isocrystals with additional structure. {II}},
  Compositio Math. \textbf{109} (1997), no.~3, 255--339. \MR{1485921}

\bibitem[Kut77]{Kutzko1977}
P.~C. Kutzko, \emph{Mackey's theorem for nonunitary representations}, Proc.
  Amer. Math. Soc. \textbf{64} (1977), no.~1, 173--175.

\bibitem[KY17]{KimYu2017}
Ju-Lee Kim and Jiu-Kang Yu, \emph{Construction of tame types}, Representation
  theory, number theory, and invariant theory, Progr. Math., vol. 323,
  Birkh\"{a}user/Springer, Cham, 2017, pp.~337--357. \MR{3753917}

\bibitem[Lat16]{Latham2016}
Peter Latham, \emph{Unicity of types for supercuspidal representations of
  {$p$}-adic {$\mathrm{SL}_2$}}, J. Number Theory \textbf{162} (2016),
  376--390. \MR{3448273}

\bibitem[Lat17]{Latham2017}
\bysame, \emph{The unicity of types for depth-zero supercuspidal
  representations}, Represent. Theory \textbf{21} (2017), 590--610.
  \MR{3735454}

\bibitem[Lat18]{Latham2018}
\bysame, \emph{On the unicity of types in special linear groups}, Manuscripta
  Math. \textbf{157} (2018), no.~3-4, 445--465. \MR{3858412}

\bibitem[LN19]{LathamNevins2018}
Peter Latham and Monica Nevins, \emph{On the unicity of types for toral
  supercuspidal representations}, Representations of reductive {$p$}-adic
  groups, Progr. Math., vol. 328, Birkh\"{a}user/Springer, Singapore, 2019,
  pp.~175--190. \MR{3889763}

\bibitem[Mor99]{Morris1999}
Lawrence Morris, \emph{Level zero {$\bf G$}-types}, Compositio Math.
  \textbf{118} (1999), no.~2, 135--157.

\bibitem[MP96]{MoyPrasad1996}
Allen Moy and Gopal Prasad, \emph{Jacquet functors and unrefined minimal
  {$K$}-types}, Comment. Math. Helv. \textbf{71} (1996), no.~1, 98--121.

\bibitem[MS12]{MeyerSolleveld2012}
Ralf Meyer and Maarten Solleveld, \emph{Characters and growth of admissible
  representations of reductive {$p$}-adic groups}, J. Inst. Math. Jussieu
  \textbf{11} (2012), no.~2, 289--331. \MR{2905306}

\bibitem[Nad17]{Nadimpalli2017}
Santosh Nadimpalli, \emph{Typical representations for level zero {B}ernstein
  components of {${\rm GL}_n(F)$}}, J. Algebra \textbf{469} (2017), 1--29.
  \MR{3563005}

\bibitem[Nad19]{Nadimpalli2019}
\bysame, \emph{On classification of typical representations for {${\rm
  GL}_3(F)$}}, Forum Math. \textbf{31} (2019), no.~4, 917--941. \MR{3975668}

\bibitem[Nev05]{Nevins2005}
Monica Nevins, \emph{Branching rules for principal series representations of
  {${\rm SL}(2)$} over a {$p$}-adic field}, Canad. J. Math. \textbf{57} (2005),
  no.~3, 648--672.

\bibitem[Nev13]{Nevins2013}
\bysame, \emph{Branching rules for supercuspidal representations of
  {$SL_2(k)$}, for {$k$} a {$p$}-adic field}, J. Algebra \textbf{377} (2013),
  204--231.

\bibitem[Nev14]{Nevins2014}
\bysame, \emph{On branching rules of depth-zero representations}, J. Algebra
  \textbf{408} (2014), 1--27. \MR{3197168}

\bibitem[OS14]{OnnSingla2014}
Uri Onn and Pooja Singla, \emph{On the unramified principal series of
  {$\mathrm{GL}(3)$} over non-{A}rchimedean local fields}, J. Algebra
  \textbf{397} (2014), 1--17.

\bibitem[Pas05]{Paskunas2005}
Vytautas Paskunas, \emph{Unicity of types for supercuspidal representations of
  {${\rm GL}_N$}}, Proc. London Math. Soc. (3) \textbf{91} (2005), no.~3,
  623--654. \MR{2180458}

\bibitem[PR08]{PappasRappoport2008}
G.~Pappas and M.~Rapoport, \emph{Twisted loop groups and their affine flag
  varieties}, Adv. Math. \textbf{219} (2008), no.~1, 118--198, With an appendix
  by T. Haines and Rapoport. \MR{2435422}

\bibitem[SS08]{SecherreStevens2008}
V.~S\'{e}cherre and S.~Stevens, \emph{Repr\'{e}sentations lisses de {${\rm
  GL}_m(D)$}. {IV}. {R}epr\'{e}sentations supercuspidales}, J. Inst. Math.
  Jussieu \textbf{7} (2008), no.~3, 527--574. \MR{2427423}

\bibitem[Ste05]{Stevens2005}
Shaun Stevens, \emph{Semisimple characters for {$p$}-adic classical groups},
  Duke Math. J. \textbf{127} (2005), no.~1, 123--173. \MR{2126498}

\bibitem[Ste08]{Stevens2008}
\bysame, \emph{The supercuspidal representations of {$p$}-adic classical
  groups}, Invent. Math. \textbf{172} (2008), no.~2, 289--352.

\bibitem[Yu01]{Yu2001}
Jiu-Kang Yu, \emph{Construction of tame supercuspidal representations}, J.
  Amer. Math. Soc. \textbf{14} (2001), no.~3, 579--622 (electronic).

\bibitem[Yu15]{Yu2015}
\bysame, \emph{Smooth models associated to concave functions in {B}ruhat-{T}its
  theory}, Autour des sch\'{e}mas en groupes. {V}ol. {III}, Panor. Synth\`eses,
  vol.~47, Soc. Math. France, Paris, 2015, pp.~227--258. \MR{3525846}

\end{thebibliography}

\end{document}